\documentclass[a4paper]{amsart}
\pdfoutput=1
\usepackage[utf8]{inputenc}
\usepackage[T1]{fontenc}
\usepackage{amssymb,mathtools}
\usepackage{mathrsfs}
\usepackage{dsfont}
\usepackage{lmodern}

\usepackage[all]{xy}
\usepackage{microtype}

\usepackage{enumitem}
\setlist[enumerate,1]{label=\textup{(\arabic*)}}% ensure enumerates in theorems are upright

\usepackage[lite]{amsrefs}

\renewcommand*{\PrintDOI}[1]{\href{http://dx.doi.org/\detokenize{#1}}{doi: \detokenize{#1}}}

% doi and eprint added to standard definition from amsrefs.sty
\BibSpec{book}{%
  +{}  {\PrintPrimary}                {transition}
  +{,} { \textit}                     {title}
  +{.} { }                            {part}
  +{:} { \textit}                     {subtitle}
  +{,} { \PrintEdition}               {edition}
  +{}  { \PrintEditorsB}              {editor}
  +{,} { \PrintTranslatorsC}          {translator}
  +{,} { \PrintContributions}         {contribution}
  +{,} { }                            {series}
  +{,} { \voltext}                    {volume}
  +{,} { }                            {publisher}
  +{,} { }                            {organization}
  +{,} { }                            {address}
  +{,} { \PrintDateB}                 {date}
  +{,} { }                            {status}
  +{}  { \parenthesize}               {language}
  +{}  { \PrintTranslation}           {translation}
  +{;} { \PrintReprint}               {reprint}
  +{.} { }                            {note}
  +{.} {}                             {transition}
  +{} { \PrintDOI}                   {doi}
  +{} { available at \url}            {eprint}
  +{}  {\SentenceSpace \PrintReviews} {review}
}

\usepackage[pdftitle={A bicategorical interpretation for relative Cuntz-Pimsner algebras},
pdfauthor={Camila Fabre Sehnem and Ralf Meyer},
pdfsubject={Mathematics}
]{hyperref}

\hyphenation{cor-res-pon-den-ce sub-bi-ca-te-go-ry}

\numberwithin{equation}{section}

\theoremstyle{plain}
\newtheorem{thm}[equation]{Theorem}
\newtheorem{cor}[equation]{Corollary}
\newtheorem{lem}[equation]{Lemma}
\newtheorem{prop}[equation]{Proposition}

\theoremstyle{definition}
\newtheorem{defn}[equation]{Definition}

\theoremstyle{remark}
\newtheorem{rem}[equation]{Remark}
\newtheorem{example}[equation]{Example}

\newcommand{\ZZ}{\mathbb{Z}}

\newcommand{\NN}{\mathbb{N}}
\newcommand{\TT}{\mathbb{T}}
\newcommand{\CC}{\mathbb{C}}

\DeclareMathOperator{\obj}{ob}% objects of a (bi)category

\newcommand*{\nb}{\nobreakdash}
\newcommand*{\Star}{\(^*\)\nobreakdash-}
\newcommand{\Cst}{\mathrm{C}^*}% C*-algebras and friends
% continuous functions
\newcommand{\idealin}{\mathrel{\triangleleft}} % relation of being an ideal
\newcommand*{\Bound}{\mathbb B}%adjointable operators on a Hilbert module
\newcommand*{\Comp}{\mathbb K}%compact operators on a Hilbert module
\newcommand*{\defeq}{\mathrel{\vcentcolon=}}% used for definitions
\newcommand{\Hilm}[1][E]{\mathcal{#1}}% Hilbert module
% multiplier algebra
\newcommand{\Toep}{\mathcal{T}}% Toeplitz algebra
\newcommand{\CP}{\mathcal{O}}% Cuntz-Pimsner algebra
\newcommand{\Corr}{\mathfrak{C}}% correspondence bicategory
\newcommand{\Bim}{\mathrm{pr,*}}% Hilbert bimodule sub-bicategory
\newcommand{\proper}{\mathrm{pr}}% proper
\newcommand{\id}{\mathrm{id}}% identity
\newcommand{\op}{\mathrm{op}}% opposite

% absolute value
\DeclarePairedDelimiter{\norm}{\lVert}{\rVert}% norm
\DeclarePairedDelimiter{\bra}{\langle}{\rvert}% ket-bra notation
\DeclarePairedDelimiter{\ket}{\lvert}{\rangle}% ket-bra notation
\DeclarePairedDelimiterX{\braket}[2]{\langle}{\rangle}{#1\,\delimsize\vert\,\mathopen{}#2}% inner product
\DeclarePairedDelimiterX{\BRAKET}[2]{\langle}{\rangle}{\!\delimsize\langle#1\,\delimsize\vert\,\mathopen{}#2\delimsize\rangle\!}% inner product

\DeclarePairedDelimiterX{\setgiven}[2]{\{}{\}}{#1\,{:}\,\mathopen{}#2}% set given by

\begin{document}
\title[A bicategorical interpretation for Cuntz--Pimsner algebras]{A bicategorical interpretation for\\ relative Cuntz--Pimsner algebras}

\author{Ralf Meyer}
\email{rmeyer2@uni-goettingen.de}

\author{Camila F. Sehnem}
\email{camila.fabre-sehnem@mathematik.uni-goettingen.de}

\address{Mathematisches Institut, Georg-August-Universität Göttingen,
  Bunsenstraße 3--5, 37073, Göttingen, Germany}

\keywords{relative Cuntz--Pimsner algebra; correspondence bicategory}

\thanks{The second author was supported by CNPq (Brazil).}

\begin{abstract}
  We interpret the construction of relative Cuntz--Pimsner algebras of
  correspondences in terms of the correspondence bicategory, as a
  reflector into a certain sub-bicategory.  This generalises a
  previous characterisation of absolute Cuntz--Pimsner algebras of
  proper correspondences as colimits in the correspondence bicategory.
\end{abstract}
\maketitle

\section{Introduction}

Many important \(\Cst\)\nb-algebras
may be described as (relative) Cuntz--Pimsner algebras, see
\cites{Pimsner:Generalizing_Cuntz-Krieger, Muhly-Solel:Tensor,
  Katsura:Cstar_correspondences}.  These are defined by
triples~\((A,\Hilm,J)\),
where~\(A\)
is a \(\Cst\)\nb-algebra,
\(\Hilm\)
is a \(\Cst\)\nb-correspondence
from~\(A\)
to itself, that is, a Hilbert \(A\)\nb-module
with a nondegenerate left action of~\(A\)
by adjointable operators, \(\varphi\colon A\to\Bound(\Hilm)\),
and \(J\idealin A\)
is an ideal that acts on~\(\Hilm\)
by compact operators, that is, \(\varphi(J)\subseteq \Comp(\Hilm)\).
The Cuntz--Pimsner covariance condition is only required on~\(J\).

We view the correspondence~\(\Hilm\)
as a generalised endomorphism of~\(A\).
If~\(\Hilm\)
comes from an automorphism~\(\alpha\)
of~\(A\),
then the relative Cuntz--Pimsner algebra for \(J= A\)
is naturally isomorphic to the crossed product
\(A\rtimes_\alpha \ZZ\).
So we may view Cuntz--Pimsner algebras as analogues of crossed
products for automorphisms.  This is made precise
in~\cite{Albandik-Meyer:Colimits} by viewing both crossed products and
Cuntz--Pimsner algebras as colimits of diagrams in the bicategory of
\(\Cst\)\nb-correspondences.
The interpretation of Cuntz--Pimsner algebras
in~\cite{Albandik-Meyer:Colimits} is limited, however, to
\emph{proper} correspondences, that is,
\(\varphi(A)\subseteq \Comp(\Hilm)\),
and the ``absolute'' case \(J=A\).
This article is concerned with another bicategorical interpretation of
the Cuntz--Pimsner algebra construction, which needs no properness and
extends to the relative case.

Our results use the equivalence between \(\Cst\)\nb-algebras
with a \(\TT\)\nb-action
and Fell bundles over~\(\ZZ\),
see~\cite{Abadie-Eilers-Exel:Morita_bimodules}.
The spectral decomposition of a \(\TT\)\nb-action~\(\beta\)
on a \(\Cst\)\nb-algebra~\(B\)
gives a Fell bundle~\((B_n)_{n\in\NN}\)
over the group~\(\ZZ\)
whose section \(\Cst\)\nb-algebra
\(\Cst((B_n)_{n\in\NN})\) is canonically isomorphic to~\(B\); namely,
\[
B_n \defeq \setgiven{b\in B}{\beta_z(b) = z^n \cdot b}
\]
for \(n\in\ZZ\)
with the multiplication, involution and norm from~\(B\).
Conversely, the section \(\Cst\)\nb-algebra
of any Fell bundle over~\(\ZZ\)
carries a canonical gauge action of~\(\TT\).
The Fell bundle underlying a Cuntz--Pimsner algebra is
\emph{semi-saturated}, that is, \(B_n\cdot B_m = B_{n+m}\)
if \(n,m\ge0\)
(or if \(n,m\le0\)).
Here and below, \(X\cdot Y\)
means the closed linear span of
\(\setgiven{x\cdot y}{x\in X,\ y\in Y}\).
By the results of~\cite{Abadie-Eilers-Exel:Morita_bimodules}, a
semi-saturated Fell bundle is determined by its fibres \(B_0\)
and~\(B_1\):
\(B_0\)
is a \(\Cst\)\nb-algebra,
\(B_1\)
is a Hilbert \(B_0\)\nb-bimodule,
and the crossed product for the Hilbert \(B_0\)\nb-bimodule~\(B_1\)
is isomorphic to the section \(\Cst\)\nb-algebra
of the Fell bundle generated by \(B_0\) and~\(B_1\).

Thus we split the construction of Cuntz--Pimsner algebras with their
canonical \(\TT\)\nb-action
into two steps.  The first builds the Hilbert bimodule
\(\CP^1_{J,\Hilm}\)
over~\(\CP^0_{J,\Hilm}\),
the second takes the crossed product for this Hilbert bimodule.  When
we include the gauge action, then the second step is reversible using
the spectral decomposition.  This article interprets the first step in
the construction as a reflector to a sub-bicategory.  A Hilbert
bimodule is a \(\Cst\)\nb-correspondence
with an additional left inner product, which is unique if it exists.
Thus Hilbert bimodules form a full sub-bicategory in the
correspondence bicategory.  We describe a bicategory whose objects are
the triples \((A,\Hilm,J)\)
needed to define a relative Cuntz--Pimsner algebra.  Those triples
where~\(\Hilm\)
is a Hilbert bimodule and~\(J\)
is Katsura's ideal for~\(\Hilm\)
form a full sub-bicategory.  We show that the construction of
\((\CP^0_{J,\Hilm},\CP^1_{J,\Hilm})\)
is a \emph{reflector} onto this sub-bicategory.  Roughly speaking, a
reflector approximates a given object by an object in the
sub-bicategory in the optimal way.  More precisely, it is a left
(bi)adjoint to the inclusion of the sub-bicategory.

We gradually work up to such bicategorical considerations.
Section~\ref{sec:prelim} deals with known properties of relative
Cuntz--Pimsner algebras.  We also discuss their Fell bundle structure
coming from the gauge action, and we show that the Cuntz--Pimsner
algebra~\(\CP_{J,\Hilm}\)
is the crossed product of its gauge-fixed point
algebra~\(\CP^0_{J,\Hilm}\)
by the Hilbert \(\CP^0_{J,\Hilm}\)-bimodule
\(\CP^1_{J,\Hilm}\).
Section~\ref{sec:prelim} culminates in a result about the
functoriality of relative Cuntz--Pimsner algebras, which goes back to
an idea of Schweizer~\cite{Schweizer:Crossed_Cuntz-Pimsner}.  We
correct his idea and extend it to the relative case by defining proper
covariant correspondences between triples \((A,\Hilm,J)\)
so that they induce correspondences between the associated relative
Cuntz--Pimsner algebras.

This construction is upgraded in Section~\ref{mainsection} to a
homomorphism of bicategories (or ``functor'') from a certain
bicategory~\(\Corr^\NN_\proper\)
to the \(\TT\)\nb-equivariant
correspondence bicategory~\(\Corr^\TT\).
The objects of~\(\Corr^\NN_\proper\)
are the triples \((A,\Hilm,J)\)
needed to define a relative Cuntz--Pimsner algebra, the arrows are the
proper covariant correspondences introduced in
Section~\ref{sec:prelim}, and the \(2\)\nb-arrows
are isomorphisms of covariant correspondences.  Whereas Schweizer
reduces to ordinary categories by identifying isomorphic
correspondences, bicategories are crucial for our purposes, as
in~\cite{Albandik-Meyer:Colimits}.

Then we define a sub-bicategory~\(\Corr^\NN_\Bim\)
by restricting to Hilbert bimodules instead of correspondences.  We
prove a crucial statement about covariant correspondences, namely,
that proper covariant correspondences
\((A,\Hilm,J)\to (B,\Hilm[G],I_{\Hilm[G]})\)
are ``equivalent'' to proper covariant correspondences
\((\CP^0_{J,\Hilm},\CP^1_{J,\Hilm},I_{\Hilm})\to
(B,\Hilm[G],I_{\Hilm[G]})\)
for all \((B,\Hilm[G],I_{\Hilm[G]})\)
in~\(\Corr^\NN_\Bim\),
that is, for a Hilbert \(B\)\nb-bimodule~\(\Hilm[G]\)
and Katsura's ideal \(I_{\Hilm[G]}\).

Section~\ref{mainsection1} introduces the bicategorical language to
understand this fact: it says that a certain arrow
\[
\upsilon_{(A,\Hilm,J)}\colon (A,\Hilm,J)\to(\CP^0_{J,\Hilm},\CP^1_{J,\Hilm},I_{\Hilm})
\]
is a universal arrow from \((A,\Hilm,J)\)
to~\(\Corr^\NN_\Bim\).
The existence of universal arrows implies an adjunction
(see~\cite{Fiore:Pseudo_biadjoints}).  So general bicategory theory
upgrades the ``equivalence'' observed above to our main statement,
namely, that the sub-bicategory
\(\Corr^\NN_\Bim\subseteq\Corr^\NN_\proper\)
is reflective and that the reflector homomorphism
\(\Corr^\NN_\proper\to\Corr^\NN_\Bim\)
acts on objects by mapping \((A,\Hilm,J)\)
to \((\CP^0_{J,\Hilm},\CP^1_{J,\Hilm},I_{\CP^1_{J,\Hilm}})\).
We describe this reflector in detail and show that its composite with
the crossed product homomorphism \(\Corr^\NN_\Bim\to\Corr^\TT\)
is the relative Cuntz--Pimsner algebra homomorphism
\(\Corr^\NN_\Bim\subseteq\Corr^\TT\)
described in Section~\ref{mainsection}.  The
definitions of bicategories, homomorphisms, transformations, and
modifications are recalled in the appendix, together with some
examples related to the correspondence bicategory.

\section{Preliminaries}
\label{sec:prelim}

In this section, we recall basic results on Cuntz--Pimsner algebras,
and their gauge action and Fell bundle structure.  We correct and
generalise an idea by Schweizer on the functoriality of Cuntz--Pimsner
algebras for covariant correspondences.

\subsection{Correspondences}

Let \(\Hilm[F]_1\),
\(\Hilm[F]_2\)
be Hilbert \(B\)-modules.
Let \(\Bound(\Hilm[F]_1,\Hilm[F]_2)\)
be the space of adjointable operators from \(\Hilm[F]_1\)
to \(\Hilm[F]_2\).
Let \(\ket{\xi}\bra{\eta} \in \Bound(\Hilm[F]_1,\Hilm[F]_2)\)
for \(\xi\in\Hilm[F]_2\)
and \(\eta\in\Hilm[F]_1\)
be the \emph{generalised rank\nb-\(1\)
  operator} defined by
\(\ket{\xi}\bra{\eta}(\zeta)\defeq \xi\braket{\eta}{\zeta}_B\).
Let \(\Comp(\Hilm[F]_1,\Hilm[F]_2)\)
be the closed linear span of \(\ket{\xi}\bra{\eta}\)
for \(\xi\in\Hilm[F]_1\)
and \(\eta\in\Hilm[F]_2\).
Elements of \(\Comp(\Hilm[F]_1,\Hilm[F]_2)\)
are called \emph{compact operators}.  We abbreviate
\(\Bound(\Hilm[F]) \defeq \Bound(\Hilm[F],\Hilm[F])\)
and \(\Comp(\Hilm[F]) \defeq \Comp(\Hilm[F],\Hilm[F])\)
if \(\Hilm[F]=\Hilm[F]_1=\Hilm[F]_2\).

\begin{lem}
  \label{lem:Comp_submodules}
  Let \(\Hilm_1\subseteq \Hilm[F]_1\)
  and \(\Hilm_2\subseteq \Hilm[F]_2\)
  be Hilbert \(B\)\nb-submodules.
  There is a unique map
  \(\Comp(\Hilm_1,\Hilm_2) \to \Comp(\Hilm[F]_1,\Hilm[F]_2)\)
  that maps \(\ket{\xi}\bra{\eta}\in\Comp(\Hilm_1,\Hilm_2)\)
  to \(\ket{\xi}\bra{\eta}\in\Comp(\Hilm[F]_1,\Hilm[F]_2)\)
  for all \(\xi\in\Hilm_2\),
  \(\eta\in\Hilm_1\).  This map is injective.
\end{lem}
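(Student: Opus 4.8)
The plan is to define the desired map on the dense \(*\)-subspace of finite linear combinations of the operators \(\ket{\xi}\bra{\eta}\) with \(\xi\in\Hilm_2\), \(\eta\in\Hilm_1\), and then to extend it by continuity; uniqueness is then automatic, since these finite sums are by definition dense in \(\Comp(\Hilm_1,\Hilm_2)\) and the prescribed values pin the map down there. The substance of the lemma is therefore that the assignment \(\sum_i\ket{\xi_i}\bra{\eta_i}\mapsto\sum_i\ket{\xi_i}\bra{\eta_i}\) is well defined and isometric, the latter yielding continuity (hence the extension) and injectivity in one stroke. The main obstacle is that a Hilbert submodule need not be orthogonally complemented, so the inclusions \(\Hilm_1\hookrightarrow\Hilm[F]_1\) and \(\Hilm_2\hookrightarrow\Hilm[F]_2\) need not be adjointable; in particular, we cannot realise the map by compressing operators on \(\Hilm[F]_j\) with projections onto \(\Hilm_j\). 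I would circumvent this by factoring the operators through a free module, using only the \(B\)\nb-valued inner products, which restrict from \(\Hilm[F]_j\) to \(\Hilm_j\).

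Concretely, given \(\xi_1,\dotsc,\xi_n\in\Hilm_2\) and \(\eta_1,\dotsc,\eta_n\in\Hilm_1\), I would introduce the adjointable operators \(G\colon B^n\to\Hilm_1\) and \(H\colon B^n\to\Hilm_2\) with \(G\bigl((b_i)_i\bigr)\defeq\sum_i\eta_i b_i\) and \(H\bigl((b_i)_i\bigr)\defeq\sum_i\xi_i b_i\), so that \(G^*(\alpha)=(\braket{\eta_i}{\alpha})_i\) and \(H^*(\alpha)=(\braket{\xi_i}{\alpha})_i\), and hence \(T\defeq\sum_i\ket{\xi_i}\bra{\eta_i}=HG^*\) as an operator \(\Hilm_1\to\Hilm_2\). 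The same vectors, regarded inside \(\Hilm[F]_1\) and \(\Hilm[F]_2\), give adjointable operators \(\tilde G\colon B^n\to\Hilm[F]_1\) and \(\tilde H\colon B^n\to\Hilm[F]_2\), and \(\tilde T\defeq\sum_i\ket{\xi_i}\bra{\eta_i}=\tilde H\tilde G^*\). The crucial point is that \(H^*H=\tilde H^*\tilde H=[\braket{\xi_i}{\xi_j}]\) and \(G^*G=\tilde G^*\tilde G=[\braket{\eta_i}{\eta_j}]\) as elements of \(\Comp(B^n)\cong M_n(B)\), because these inner products are computed in the same way in \(\Hilm_j\) and in \(\Hilm[F]_j\).

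Finally I would read off the norm from the \(\Cst\)\nb-identity. Writing \(P\defeq H^*H\), \(R\defeq G^*G\) and \(Q\defeq P^{1/2}\), one has \(T^*T=GPG^*=(QG^*)^*(QG^*)\), whence
\[
\norm{T}^2=\norm{QG^*}^2=\norm{QG^*GQ}=\norm{QRQ},
\]
using \(\norm{X}^2=\norm{X^*X}=\norm{XX^*}\) for adjointable \(X\). The same computation for \(\tilde T=\tilde H\tilde G^*\) gives \(\norm{\tilde T}^2=\norm{QRQ}\) as well. Since \(\norm{QRQ}\) is computed entirely inside \(\Comp(B^n)\) and depends only on \(P\) and \(R\), it is the same number in both cases, so \(\norm{T}=\norm{\tilde T}\). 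This isometry shows at once that \(T=0\) forces \(\tilde T=0\) (well-definedness), that the map is contractive and hence extends to \(\Comp(\Hilm_1,\Hilm_2)\), and that it is injective. I expect the only delicate point to be the bookkeeping around the possibly non-adjointable inclusions; the factorisation through \(B^n\) is exactly what keeps every operator that appears adjointable.
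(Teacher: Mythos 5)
The paper states Lemma~\ref{lem:Comp_submodules} without proof, so there is no argument of the authors to compare against; your proof is correct and complete. The factorisation \(\sum_i\ket{\xi_i}\bra{\eta_i}=HG^*\) through the free module \(B^n\) is exactly the right device to sidestep the non-adjointability of the inclusions \(\Hilm_j\hookrightarrow\Hilm[F]_j\), and the identity \(\norm{T}^2=\norm{QRQ}\) with \(Q=(H^*H)^{1/2}\) and \(R=G^*G\) shows that the norm of a finite-rank operator depends only on the two Gram matrices \([\braket{\xi_i}{\xi_j}]\) and \([\braket{\eta_i}{\eta_j}]\), which are computed identically in the submodules and in the ambient modules; well-definedness on finite sums, boundedness (hence the extension to all of \(\Comp(\Hilm_1,\Hilm_2)\)) and injectivity all follow from this single isometry statement.
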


\begin{defn}
  Let \(A\)
  and~\(B\)
  be \(\Cst\)\nb-algebras.
  A \emph{correspondence} from \(A\)
  to~\(B\)
  is a Hilbert \(B\)\nb-module~\(\Hilm[F]\)
  with a \emph{nondegenerate} left action of~\(A\)
  through a \Star{}homomorphism
  \(\varphi\colon A\to\Bound(\Hilm[F])\).
  A correspondence is \emph{proper} if
  \(\varphi(A)\subseteq\Comp(\Hilm[F])\).
  It is \emph{faithful} if~\(\varphi\)
  is injective.  We write \(\Hilm[F]\colon A\leadsto B\)
  to say that~\(\Hilm[F]\) is a correspondence from~\(A\) to~\(B\).
\end{defn}

\begin{defn}
  A \emph{Hilbert \(A,B\)-bimodule}
  is a (right) Hilbert \(B\)\nb-module~\(\Hilm[F]\)
  with a \emph{left} Hilbert \(A\)\nb-module
  structure \(\BRAKET{{\cdot}}{{\cdot}}_A\)
  such that \(\BRAKET{\xi}{\eta}_A\zeta = \xi\braket{\eta}{\zeta}_B\)
  for all \(\xi, \eta, \zeta\in \Hilm[F]\).
\end{defn}

If~\(\Hilm[F]\)
is a Hilbert \(A,B\)-bimodule,
then~\(A\)
acts by adjointable operators on~\(\Hilm[F]\)
and~\(B\)
acts by adjointable operators for the left Hilbert
\(A\)\nb-module
structure, that is,
\(\BRAKET{\xi b}{\eta}_A = \BRAKET{\xi}{\eta b^*}_A\)
for all \(\xi,\eta\in\Hilm[F]\)
and all \(b\in B\).
In particular, \(\Hilm\)
is an \(A,B\)-bimodule.
The next lemma characterises which correspondences may be enriched to
Hilbert bimodules:

\begin{lem}[see \cite{Echterhoff-Kaliszewski-Quigg-Raeburn:Categorical}*{Example~1.6}]
  \label{characterization-hilbertbimodule}
  A correspondence \(\Hilm[F]\colon A\leadsto B\)
  carries a Hilbert \(A,B\)-bimodule
  structure if and only if there is an ideal \(I\idealin A\)
  such that left action on~\(\Hilm[F]\)
  restricts to a \Star{}isomorphism \(I \cong \Comp(\Hilm[F])\).
  In this case, the ideal~\(I\)
  and the left inner product are unique, and
  \(I = \BRAKET{\Hilm[F]}{\Hilm[F]}_A\).
\end{lem}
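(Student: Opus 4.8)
The plan is to organise everything around the single identity hidden in the compatibility condition: for a Hilbert bimodule one has
\(\varphi(\BRAKET{\xi}{\eta}_A)\zeta = \BRAKET{\xi}{\eta}_A\zeta = \xi\braket{\eta}{\zeta}_B = \ket{\xi}\bra{\eta}(\zeta)\),
that is, the left action carries \(\BRAKET{\xi}{\eta}_A\) to the rank\nb-\(1\) operator \(\ket{\xi}\bra{\eta}\). Both implications and the uniqueness statement flow from this.

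For the forward implication I would assume \(\Hilm[F]\) is a Hilbert \(A,B\)-bimodule and set \(I\defeq\BRAKET{\Hilm[F]}{\Hilm[F]}_A\), the closed linear span of the \(\BRAKET{\xi}{\eta}_A\). The standard inner-product relations \(\BRAKET{a\xi}{\eta}_A = a\BRAKET{\xi}{\eta}_A\) and \(\BRAKET{\xi}{a\eta}_A = \BRAKET{\xi}{\eta}_A a^*\) show that \(I\) is a two-sided (hence self-adjoint) ideal. The identity above gives \(\varphi(\BRAKET{\xi}{\eta}_A) = \ket{\xi}\bra{\eta}\), so \(\varphi(I)\) is a \(\Cst\)\nb-subalgebra containing every \(\ket{\xi}\bra{\eta}\), whence \(\varphi(I)=\Comp(\Hilm[F])\). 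For injectivity of \(\varphi|_I\) I would take \(a\in I\) with \(\varphi(a)=0\); then \(a\BRAKET{\xi}{\eta}_A = \BRAKET{a\xi}{\eta}_A = \BRAKET{\varphi(a)\xi}{\eta}_A = 0\), so \(aI=0\), and since \(a^*\in I\) this forces \(aa^*=0\), i.e.\ \(a=0\).

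For the converse, given \(I\idealin A\) with \(\varphi|_I\colon I\to\Comp(\Hilm[F])\) a \Star{}isomorphism, I would simply define \(\BRAKET{\xi}{\eta}_A\defeq(\varphi|_I)^{-1}(\ket{\xi}\bra{\eta})\). Since \((\varphi|_I)^{-1}\) is a \Star{}isomorphism and \(\ket{\xi}\bra{\eta}\) is linear in \(\xi\), conjugate-linear in \(\eta\), with \((\ket{\xi}\bra{\eta})^*=\ket{\eta}\bra{\xi}\) and \(\ket{\xi}\bra{\xi}\ge0\), this is an \(A\)\nb-valued left inner product, and compatibility with the right structure is built in. The only point using that \(I\) is an ideal is the module identity: from \(\ket{a\xi}\bra{\eta}=\varphi(a)\ket{\xi}\bra{\eta}=\varphi\bigl(a\cdot(\varphi|_I)^{-1}(\ket{\xi}\bra{\eta})\bigr)\) with \(a\cdot(\varphi|_I)^{-1}(\ket{\xi}\bra{\eta})\in I\), applying \((\varphi|_I)^{-1}\) yields \(\BRAKET{a\xi}{\eta}_A=a\BRAKET{\xi}{\eta}_A\). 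Finally \((\varphi|_I)^{-1}\) is isometric, so \(\norm{\BRAKET{\xi}{\xi}_A}=\norm{\ket{\xi}\bra{\xi}}=\norm{\xi}^2\); the left and right norms agree, so \(\Hilm[F]\) is complete for the left inner product and is a genuine Hilbert \(A,B\)-bimodule. Passing to closed spans also gives \(I=\BRAKET{\Hilm[F]}{\Hilm[F]}_A\).

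The main obstacle is uniqueness, which I would first reduce to uniqueness of the ideal. Put \(J\defeq\varphi^{-1}(\Comp(\Hilm[F]))\), a closed ideal depending only on the correspondence, and \(K\defeq\ker\varphi\subseteq J\). Any qualifying ideal \(I\) lies in \(J\), meets \(K\) trivially (by injectivity of \(\varphi|_I\)), and satisfies \(\varphi(I)=\Comp(\Hilm[F])=\varphi(J)\); hence the quotient map restricts to an isomorphism \(I\cong J/K\) and \(J=I\oplus K\) as a direct sum of ideals. Then \(IK\subseteq I\cap K=0\), and an approximate-unit argument in \(K\) identifies \(I\) with the annihilator \(\setgiven{a\in J}{aK=0}\). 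As \(J\) and \(K\) are intrinsic to \((\Hilm[F],\varphi)\), this pins down \(I\). With \(I\) fixed, the compatibility identity forces \(\varphi(\BRAKET{\xi}{\eta}_A)=\ket{\xi}\bra{\eta}\), and injectivity of \(\varphi|_I\) together with \(\BRAKET{\xi}{\eta}_A\in I\) determines \(\BRAKET{\xi}{\eta}_A\) uniquely, yielding uniqueness of the left inner product and the identity \(I=\BRAKET{\Hilm[F]}{\Hilm[F]}_A\).
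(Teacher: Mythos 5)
Your proof is correct and complete; the paper itself gives no argument for this lemma, merely citing \cite{Echterhoff-Kaliszewski-Quigg-Raeburn:Categorical}*{Example~1.6}, and your argument is the standard one found there: the identity \(\varphi(\BRAKET{\xi}{\eta}_A)=\ket{\xi}\bra{\eta}\) drives both implications, and your reduction of uniqueness to the intrinsic description of \(I\) as the annihilator of \(\ker\varphi\) inside \(\varphi^{-1}(\Comp(\Hilm[F]))\) is sound (indeed, the decomposition \(a=i+k\) with \(ik^*=0\) gives \(kk^*=ak^*=0\) directly, without even needing an approximate unit).
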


\begin{defn}
  Let \(\Hilm[F]_1,\Hilm[F]_2\colon A\leadsto B\)
  be \(\Cst\)\nb-correspondences.
  A \emph{correspondence isomorphism}
  \(\Hilm[F]_1\Rightarrow \Hilm[F]_2\)
  is a unitary \(A, B\)-bimodule
  isomorphism from~\(\Hilm[F]_1\)
  to~\(\Hilm[F]_2\).
  We write~``\(\Rightarrow\)''
  because these isomorphisms are the \(2\)\nb-arrows
  in bicategories that we are going to construct.
\end{defn}

Let~\(\Hilm[F]\)
be a Hilbert \(B\)\nb-module
and let \(\varphi\colon A\to \Bound(\Hilm[F])\)
be a \Star{}homomorphism.  For \(\xi\in\Hilm\), we define an operator
\[
T_\xi\colon \Hilm[F]\to \Hilm\otimes_\varphi\Hilm[F],\qquad
\eta \mapsto \xi\otimes\eta.
\]
It is adjointable with
\(T_\xi^* (\zeta\otimes\eta)=\varphi(\braket{\xi}{\zeta})\eta\)
on elementary tensors, see~\cite{Pimsner:Generalizing_Cuntz-Krieger}.
Hence
\[
T_\xi T_\zeta^*=\ket{\xi}\bra{\zeta}\otimes1,\qquad
T_\zeta^*T_\xi=\varphi(\braket{\zeta}{\xi}),
\]
where \(\ket{\xi}\bra{\zeta}\otimes1\)
is the image of \(\ket{\xi}\bra{\zeta}\)
under the canonical map
\(\Bound(\Hilm)\to \Bound(\Hilm\otimes_\varphi\Hilm[F])\),
\(T\mapsto T\otimes1\).
Hence the operator~\(T_\xi\)
for \(\xi\in\Hilm\)
is compact if and only if \(\varphi(\braket{\xi}{\xi}) = T_\xi^* T_\xi\)
is compact.

\begin{lem}[\cite{Pimsner:Generalizing_Cuntz-Krieger}*{Corollary 3.7}]
  \label{lem:Ttens_compact}
  Let \(J\defeq \varphi^{-1}(\Comp(\Hilm[F]))\idealin A\)
  and let \(T\in\Comp(\Hilm)\).
  The operator \(T\otimes1\)
  on \(\Hilm\otimes_A \Hilm[F]\)
  is compact if and only if \(T\in\Comp(\Hilm\cdot J)\)
  \textup{(}see Lemma~\textup{\ref{lem:Comp_submodules}} for the
  inclusion \(\Comp(\Hilm\cdot J) \subseteq \Comp(\Hilm)\)\textup{)}.
\end{lem}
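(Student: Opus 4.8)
The plan is to read off both implications from the operators~\(T_\xi\), using the fact recorded just before the statement that \(T_\xi\) is compact precisely when \(\varphi(\braket{\xi}{\xi})\in\Comp(\Hilm[F])\), that is, when \(\braket{\xi}{\xi}\in J\). The first thing I would establish is the description
\[
  \Hilm\cdot J=\setgiven{\xi\in\Hilm}{\braket{\xi}{\xi}\in J}.
\]
Here ``\(\subseteq\)'' is clear since \(\braket{\Hilm\cdot J}{\Hilm\cdot J}\subseteq J\); for ``\(\supseteq\)'', if \(a\defeq\braket{\xi}{\xi}\in J\) then the functional-calculus estimate \(\norm{\xi-\xi a^{1/n}}^2=\norm{a(1-a^{1/n})^2}\to0\) exhibits \(\xi=\lim_n\xi a^{1/n}\) with \(a^{1/n}\in J\), so \(\xi\in\Hilm\cdot J\). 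Combined with the displayed fact, this says precisely that \(T_\xi\) is compact if and only if \(\xi\in\Hilm\cdot J\).

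For the implication ``\(T\in\Comp(\Hilm\cdot J)\Rightarrow T\otimes1\) compact'', I would use Lemma~\ref{lem:Comp_submodules} to write \(T\) as a norm limit of finite sums \(\sum_i\ket{\xi_i}\bra{\eta_i}\) with \(\xi_i,\eta_i\in\Hilm\cdot J\). Then \(\ket{\xi_i}\bra{\eta_i}\otimes1=T_{\xi_i}T_{\eta_i}^*\) is compact, being a product of the compact operators \(T_{\xi_i}\) and \(T_{\eta_i}^*\); since \(T\mapsto T\otimes1\) is contractive, \(T\otimes1\) is compact.

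The converse is the heart of the matter, and the key device is the identity
\[
  T_\xi^*(T\otimes1)T_\zeta=\varphi(\braket{\xi}{T\zeta}),\qquad \xi,\zeta\in\Hilm,
\]
verified on elementary tensors. If \(T\otimes1\) is compact, the left-hand side is compact for all \(\xi,\zeta\), so \(\braket{\xi}{T\zeta}\in J\); choosing \(\xi=T\zeta\) and invoking the description of \(\Hilm\cdot J\) gives \(\overline{T\Hilm}\subseteq\Hilm\cdot J\), and applying this to the compact operator \((T\otimes1)^*=T^*\otimes1\) gives \(\overline{T^*\Hilm}\subseteq\Hilm\cdot J\) as well.

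It remains to prove that any \(T\in\Comp(\Hilm)\) with \(\overline{T\Hilm},\overline{T^*\Hilm}\subseteq\Hilm\cdot J\) already lies in \(\Comp(\Hilm\cdot J)\); this is the only genuine obstacle, since \(\Hilm\cdot J\) need not be orthogonally complemented in~\(\Hilm\), so there is no projection onto it to exploit. I would first handle \(S\defeq T^*T\ge0\): its closed range \(\overline{S\Hilm}=\overline{S^{1/4}\Hilm}\) (the closed ranges of the positive powers of~\(S\) coincide) is contained in \(\overline{T^*\Hilm}\subseteq\Hilm\cdot J\). Approximating \(S^{1/2}\in\Comp(\Hilm)\) by finite sums \(\sum_k\ket{\gamma_k}\bra{\delta_k}\) and writing \(S=S^{1/4}S^{1/2}S^{1/4}\), I obtain \(S\) as a norm limit of \(\sum_k\ket{S^{1/4}\gamma_k}\bra{S^{1/4}\delta_k}\), whose vectors lie in \(\overline{S^{1/4}\Hilm}\subseteq\Hilm\cdot J\); hence \(S\in\Comp(\Hilm\cdot J)\). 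Finally, a direct check on rank-one operators shows \(\Comp(\Hilm\cdot J)\) is an ideal in \(\Comp(\Hilm)\); combined with \(S^{1/n}\in\Comp(\Hilm\cdot J)\) and the estimate \(T=\lim_n TS^{1/n}\) (the functional-calculus estimate once more, now with \(a=S\)), this yields \(TS^{1/n}\in\Comp(\Hilm\cdot J)\) and therefore \(T\in\Comp(\Hilm\cdot J)\), completing the proof.
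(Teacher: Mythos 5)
The paper gives no proof of this lemma at all --- it is quoted from Pimsner's Corollary~3.7 --- so there is nothing in the text to measure your argument against; I can only assess it on its own terms, and it is correct and self-contained. The identification \(\Hilm\cdot J=\setgiven{\xi\in\Hilm}{\braket{\xi}{\xi}\in J}\), the identity \(T_\xi^*(T\otimes1)T_\zeta=\varphi(\braket{\xi}{T\zeta})\), and the passage from \(\overline{T^*\Hilm}\subseteq\Hilm\cdot J\) to \(T\in\Comp(\Hilm\cdot J)\) via \(S\defeq T^*T\) and \(T=\lim_n TS^{1/n}\) are all sound, and you have correctly located the real content in the last step, where the absence of a complemented copy of \(\Hilm\cdot J\) forces the functional-calculus manoeuvres. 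Two small remarks. First, the ``direct check'' that \(\Comp(\Hilm\cdot J)\) is an ideal in \(\Comp(\Hilm)\) is not quite a one-line computation: the product \(\ket{\xi}\bra{\eta}\cdot\ket{\xi'}\bra{\eta'}=\ket{\xi\braket{\eta}{\xi'}}\bra{\eta'}\) with \(\xi,\eta\in\Hilm\cdot J\) has its right-hand vector \(\eta'\) only in \(\Hilm\), so you must reuse the factorisation trick from your first step --- \(\xi=\lim_n\xi\braket{\xi}{\xi}^{1/n}\) together with \(\ket{\zeta a}\bra{\eta'}=\ket{\zeta}\bra{\eta'a^*}\) --- to push a factor from \(J\) across to the other leg; once this is said, the check goes through. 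Second, you could bypass the ideal statement entirely: approximating \(S^{1/n}\in\Comp(\Hilm\cdot J)\) by \(\sum_i\ket{\xi_i}\bra{\eta_i}\) with \(\xi_i,\eta_i\in\Hilm\cdot J\) gives \(TS^{1/n}\) as a limit of \(\sum_i\ket{T\xi_i}\bra{\eta_i}\), and \(T\xi_i\in\Hilm\cdot J\) by the inclusion \(\overline{T\Hilm}\subseteq\Hilm\cdot J\) that you established but otherwise never use. Either way the argument is complete.
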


In particular, if \(\varphi(A)\subseteq\Comp(\Hilm[F])\),
then \(T\otimes 1 \in \Comp(\Hilm\otimes_{\varphi}\Hilm[F])\)
for all \(T\in\Comp(\Hilm)\).

\subsection{\texorpdfstring{$\Cst$\nb-}{C*-}algebras of correspondences}

Let \(\Hilm\colon A\leadsto A\)
be a correspondence over~\(A\).
Let \(\varphi\colon A\to\Bound(\Hilm)\)
be the left action.  Let \(\Hilm^{\otimes n}\)
be the \(n\)\nb-fold
tensor product of~\(\Hilm\)
over~\(A\).
By convention, \({\Hilm}^{\otimes 0}\defeq A\).
Let \(\Hilm^+\defeq \bigoplus^\infty_{n=0} \Hilm^{\otimes n}\)
be the \emph{Fock space} of~\(\Hilm\),
see~\cite{Pimsner:Generalizing_Cuntz-Krieger}.  Define
\[
t_\xi^n\colon\Hilm^{\otimes n}\to\Hilm^{\otimes n+1},\qquad
\eta\mapsto \xi\otimes\eta,
\]
for \(n\ge0\)
and \(\xi\in\Hilm\);
this is the operator~\(T_\xi\)
above for \(\Hilm[F]=\Hilm^{\otimes n}\).
The operators~\(t_\xi^n\)
combine to an operator \(t_\xi\in\Bound(\Hilm^+)\),
that is, \(t_\xi|_{\Hilm^{\otimes n}} = t_\xi^n\).
Let \(\varphi_\infty\colon A\to \Bound(\Hilm^+)\)
be the obvious representation by block diagonal operators and let
\(t_\infty\colon \Hilm\to \Bound(\Hilm^+)\)
be the linear map \(\xi\mapsto t_\xi\).

\begin{defn}
  The \emph{Toeplitz \(\Cst\)\nb-algebra}~\(\Toep_{\Hilm}\)
  of~\(\Hilm\)
  is the \(\Cst\)\nb-subalgebra
  of \(\Bound(\Hilm^+)\)
  generated by \(\varphi_\infty(A) + t_\infty(\Hilm)\).
\end{defn}

Let~\(J\)
be an ideal of \(A\)
with \(\varphi(J)\subseteq \Comp(\Hilm)\).
Let~\(P_0\)
be the projection in \(\Bound(\Hilm^+)\)
that is the identity on \(A\subseteq\Hilm^+\)
and that vanishes on~\(\Hilm^{\otimes n}\)
for \(n\ge1\).
Then \(J_0\defeq \varphi_\infty(J)P_0\)
is contained in~\(\Toep_{\Hilm}\).
The ideal in~\(\Toep_{\Hilm}\)
generated by~\(J_0\)
is equal to \(\Comp(\Hilm^+J) \subseteq \Comp(\Hilm^+)\).

\begin{defn}[\cite{Muhly-Solel:Tensor}*{Definition 2.18}]
  The \emph{relative Cuntz--Pimsner algebra}~\(\CP_{J,\Hilm}\)
  of~\(\Hilm\)
  with respect to~\(J\) is \(\Toep_{\Hilm}/\Comp(\Hilm^+J)\).
\end{defn}

The following three cases are particularly important.  First, if
\(J=\{0\}\),
then \(\CP_{J,\Hilm}\)
is the Toeplitz \(\Cst\)\nb-algebra~\(\Toep_{\Hilm}\).
Secondly, if \(J=\varphi^{-1}(\Comp(\Hilm))\)
and~\(\varphi\)
is injective, then~\(\CP_{J,\Hilm}\)
is the algebra~\(\tilde{\CP}_{\Hilm}\)
defined by Pimsner~\cite{Pimsner:Generalizing_Cuntz-Krieger}.
Third, if~\(J\) is \emph{Katsura's ideal}
\begin{equation}
  \label{eq:Katsura_ideal}
  I_{\Hilm}\defeq \varphi_{\Hilm}^{-1}(\Comp(\Hilm))\cap (\ker\varphi_{\Hilm})^\perp,  
\end{equation}
then~\(\CP_{I_{\Hilm},\Hilm}\)
is Katsura's Cuntz--Pimsner algebra as defined
in~\cite{Katsura:Cstar_correspondences}.

\begin{prop}
  \label{pro:Katsura_injective}
  Katsura's ideal~\(I_{\Hilm}\)
  in~\eqref{eq:Katsura_ideal} is the largest ideal~\(J\)
  in~\(A\)
  with \(\varphi(J)\subseteq \Comp(\Hilm)\)
  for which the canonical map \(A\to \CP_{J,\Hilm}\)
  is injective.
\end{prop}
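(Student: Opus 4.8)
The plan is to split the statement into two parts: first, that $I_{\Hilm}$ is admissible in the sense that $\varphi(I_{\Hilm})\subseteq\Comp(\Hilm)$ and $A\to\CP_{I_{\Hilm},\Hilm}$ is injective; and second, that any ideal $J$ with $\varphi(J)\subseteq\Comp(\Hilm)$ for which $A\to\CP_{J,\Hilm}$ is injective already satisfies $J\subseteq I_{\Hilm}$. The containment $\varphi(I_{\Hilm})\subseteq\Comp(\Hilm)$ is immediate from~\eqref{eq:Katsura_ideal}, and the injectivity of $A\to\CP_{I_{\Hilm},\Hilm}$ is precisely the statement that $A$ embeds into Katsura's Cuntz--Pimsner algebra, which I would quote from~\cite{Katsura:Cstar_correspondences}. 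The genuinely new content is then the maximality, and it rests on the observation that $J\cap\ker\varphi$ always lies in the kernel of $A\to\CP_{J,\Hilm}$.

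I would first carry out this covariance step. Recall that the canonical map sends $a$ to $\varphi_\infty(a)+\Comp(\Hilm^+J)$ and that $\Comp(\Hilm^+J)$ is the kernel of the quotient $\Toep_{\Hilm}\to\CP_{J,\Hilm}$. For $a\in J\cap\ker\varphi$ the block-diagonal operator $\varphi_\infty(a)$ acts on each $\Hilm^{\otimes n}$ with $n\ge1$ as $\varphi(a)\otimes1=0$, so $\varphi_\infty(a)=\varphi_\infty(a)P_0$. Since $a\in J$, this operator lies in $\varphi_\infty(J)P_0=J_0\subseteq\Comp(\Hilm^+J)$. Hence $\varphi_\infty(a)\in\Comp(\Hilm^+J)$, so $a$ maps to~$0$ in $\CP_{J,\Hilm}$, giving $J\cap\ker\varphi\subseteq\ker(A\to\CP_{J,\Hilm})$.

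With this in hand the maximality is a short ideal-theoretic argument. Suppose $\varphi(J)\subseteq\Comp(\Hilm)$ and $A\to\CP_{J,\Hilm}$ is injective. The previous step forces $J\cap\ker\varphi=0$. As $J$ and $\ker\varphi$ are ideals, for $a\in J$ and $k\in\ker\varphi$ we have $ak\in J\cap\ker\varphi=0$, so $a$ annihilates $\ker\varphi$, i.e.\ $a\in(\ker\varphi)^\perp$. Thus $J\subseteq(\ker\varphi)^\perp$, and together with $J\subseteq\varphi^{-1}(\Comp(\Hilm))$ this yields $J\subseteq\varphi^{-1}(\Comp(\Hilm))\cap(\ker\varphi)^\perp=I_{\Hilm}$. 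Combined with the admissibility and injectivity of $I_{\Hilm}$ itself, this identifies $I_{\Hilm}$ as the largest such ideal.

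The main obstacle is the injectivity of $A\to\CP_{I_{\Hilm},\Hilm}$, which is exactly where the finer structure of~$I_{\Hilm}$ is needed. One might hope that $\varphi_\infty(a)\in\Comp(\Hilm^+J)\subseteq\Comp(\Hilm^+)$ already forces $\varphi(a)=0$ by a norm-decay argument for block-diagonal operators, but this fails: it can happen that $\norm{\varphi(a)\otimes1_{\Hilm^{\otimes n}}}\to0$ while $\varphi(a)\neq0$ (for instance with $A=\CC^2$ and a one-dimensional~$\Hilm$), so Fock-space compactness of $\varphi_\infty(a)$ does not detect whether $a$ vanishes. Disentangling all tensor degrees against the smaller ideal $\Comp(\Hilm^+I_{\Hilm})$ is precisely the content of Katsura's embedding theorem, resting on his analysis of the gauge-fixed core; I would cite that result rather than reprove it, and treat the clean ideal argument above as the essential step of the proposition.
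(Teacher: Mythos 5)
Your proposal is correct and follows essentially the same route as the paper: injectivity of $A\to\CP_{I_{\Hilm},\Hilm}$ is quoted from Katsura, and maximality comes from observing that any nonzero element of $J\cap\ker\varphi$ is killed in $\CP_{J,\Hilm}$ because $\varphi_\infty(a)=\varphi_\infty(a)P_0\in J_0$. The paper phrases this contrapositively ($J\not\subseteq(\ker\varphi)^\perp$ forces such an element to exist), while you argue directly that injectivity forces $J\cap\ker\varphi=0$ and hence $J\subseteq(\ker\varphi)^\perp$; the two are the same argument.
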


\begin{proof}
  That~\(\pi_{I_{\Hilm}}\)
  is injective is \cite{Katsura:Cstar_correspondences}*{Proposition
    4.9}.  The ideal~\(I_{\Hilm}\)
  is maximal with this property because any ideal \(J\idealin A\)
  with \(\varphi(J)\subseteq \Comp(\Hilm)\)
  and \(J\not\subseteq (\ker \varphi)^\perp\)
  must contain \(a\in J\)
  with \(\varphi(a)=0\).
  Then \(\varphi_\infty(a) \in \varphi_\infty(J)\cdot P_0\)
  becomes~\(0\) in \(\CP_{J,\Hilm}\).
\end{proof}

\begin{defn}
  Let \(\Hilm\colon A\leadsto A\)
  be a correspondence and~\(B\)
  a \(\Cst\)\nb-algebra.
  A \emph{representation} of~\(\Hilm\)
  in~\(B\)
  is a pair~\((\pi, t)\),
  where \(\pi\colon A\to B\)
  is a \Star{}homomorphism, \(t\colon\Hilm\to B\) is a linear map, and
  \begin{enumerate}
  \item \(\pi(a) t(\xi) = t(\varphi(a) \xi)\)
    for all \(a\in A\) and \(\xi\in \Hilm\);
  \item \(t(\xi)^* t(\eta) = \varphi(\braket{\xi}{\eta}_A)\)
    for all \(\xi, \eta\in \Hilm\).
  \end{enumerate}
  These conditions imply \(t(\xi) \pi(a) = t(\xi a)\)
  for all \(\xi\in \Hilm\) and \(a\in A\).
\end{defn}

In particular, \((\varphi_\infty,t_\infty)\)
is a representation of~\(\Hilm\)
in the Toeplitz \(\Cst\)\nb-algebra~\(\Toep_{\Hilm}\).
This representation is universal in the following sense:

\begin{prop}
  \label{universaltoeplitz}
  Any representation~\((\pi, t)\)
  of~\(\Hilm\)
  in a \(\Cst\)\nb-algebra~\(B\)
  is of the form
  \((\tilde{\pi}\circ\varphi_\infty,\tilde{\pi}\circ t_\infty)\)
  for a unique \Star{}homomorphism
  \(\tilde{\pi}\colon\Toep_{\Hilm}\to B\).
  Conversely,
  \((\tilde{\pi}\circ\varphi_\infty,\tilde{\pi}\circ t_\infty)\)
  is a representation of~\(\Hilm\)
  for any \Star{}homomorphism
  \(\tilde{\pi}\colon\Toep_{\Hilm}\to B\).
\end{prop}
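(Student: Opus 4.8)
I would treat the two assertions separately. The converse is a direct verification: given a \Star{}homomorphism \(\tilde\pi\colon\Toep_{\Hilm}\to B\), set \(\pi\defeq\tilde\pi\circ\varphi_\infty\) and \(t\defeq\tilde\pi\circ t_\infty\); then \(\pi\) is a \Star{}homomorphism, \(t\) is linear, and the two representation identities are obtained by applying \(\tilde\pi\) to the corresponding identities for the pair \((\varphi_\infty,t_\infty)\), which is already known to be a representation of~\(\Hilm\) in~\(\Toep_{\Hilm}\); for instance \(t(\xi)^*t(\eta)=\tilde\pi\bigl(t_\infty(\xi)^*t_\infty(\eta)\bigr)=\tilde\pi\bigl(\varphi_\infty(\braket{\xi}{\eta}_A)\bigr)=\pi(\braket{\xi}{\eta}_A)\). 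Uniqueness in the forward direction is equally formal: since \(\varphi_\infty(A)\) and \(t_\infty(\Hilm)\) generate \(\Toep_{\Hilm}\), a \Star{}homomorphism out of \(\Toep_{\Hilm}\) is determined by its restriction to these, that is, by \(\pi\) and~\(t\).

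For existence I would pass through the universal \(\Cst\)-algebra of the relations. First observe that the generators of any representation are uniformly bounded: as \(\pi\) is contractive, identity~(2) gives \(\norm{t(\xi)}^2=\norm{\pi(\braket{\xi}{\xi}_A)}\le\norm{\xi}^2\), so \(\norm{t(\xi)}\le\norm{\xi}\) and \(\norm{\pi(a)}\le\norm{a}\). Hence the supremum over all representations is a finite \(\Cst\)-seminorm on the \(*\)-algebra generated by symbols \(\pi(a)\) and \(t(\xi)\) modulo the defining relations, and its completion is a universal \(\Cst\)-algebra \(\Toep^{\mathrm{u}}\) carrying a representation \((\pi^{\mathrm{u}},t^{\mathrm{u}})\) through which every representation factors. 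Applying universality to the Fock representation yields a \Star{}homomorphism \(q\colon\Toep^{\mathrm{u}}\to\Toep_{\Hilm}\) with \(q\circ\pi^{\mathrm{u}}=\varphi_\infty\) and \(q\circ t^{\mathrm{u}}=t_\infty\), which is surjective since its range contains the generators. If \(q\) is an isomorphism, the assertion follows: a representation \((\pi,t)\) in~\(B\) produces \(\tilde\pi^{\mathrm{u}}\colon\Toep^{\mathrm{u}}\to B\) by universality, and \(\tilde\pi\defeq\tilde\pi^{\mathrm{u}}\circ q^{-1}\) is the required homomorphism.

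The crux, and the main obstacle, is the injectivity of~\(q\), that is, faithfulness of the Fock representation on \(\Toep^{\mathrm{u}}\); I would prove it by a gauge-invariant uniqueness argument. Universality makes the rescalings \(t^{\mathrm{u}}(\xi)\mapsto z\,t^{\mathrm{u}}(\xi)\), \(\pi^{\mathrm{u}}(a)\mapsto\pi^{\mathrm{u}}(a)\) into a \(\TT\)-action~\(\gamma\) on~\(\Toep^{\mathrm{u}}\), while conjugation by the unitaries \(\bigoplus_n z^n\id_{\Hilm^{\otimes n}}\) on~\(\Hilm^+\) gives a \(\TT\)-action~\(\beta\) on~\(\Toep_{\Hilm}\); the map \(q\) intertwines \(\gamma\) and~\(\beta\). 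Averaging over~\(\TT\) yields a faithful conditional expectation~\(E\) onto \((\Toep^{\mathrm{u}})^{\TT}\) compatible with~\(q\), which reduces the injectivity of~\(q\) to its injectivity on the fixed-point algebra. That algebra is the closed span of the degree-balanced monomials \(t^{\mathrm{u}}(\xi_1)\dotsm t^{\mathrm{u}}(\xi_m)\,\pi^{\mathrm{u}}(a)\,t^{\mathrm{u}}(\eta_m)^{*}\dotsm t^{\mathrm{u}}(\eta_1)^{*}\); filtering it by~\(m\) and using that these act block-diagonally on~\(\Hilm^+\), injectivity follows by induction with anchor \(\Hilm^{\otimes 0}=A\), where \(\varphi_\infty\) is faithful because left multiplication of~\(A\) on itself is (in fact \(\pi^{\mathrm{u}}\) is automatically faithful, dominating this representation of~\(A\)). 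Since this faithfulness is classical, one could instead invoke it directly from~\cite{Pimsner:Generalizing_Cuntz-Krieger}.
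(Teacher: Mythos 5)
Your argument is correct, but note that the paper offers no proof of this proposition at all: it is stated as a known fact (the universal property of the Toeplitz algebra, going back to Pimsner~\cite{Pimsner:Generalizing_Cuntz-Krieger}), so you are supplying an argument the authors simply cite. Your route is the standard one for establishing that fact: the converse direction and uniqueness are the formal verifications you give, and existence is reduced to showing that the concrete Fock-space algebra coincides with the universal \(\Cst\)\nb-algebra \(\Toep^{\mathrm{u}}\) of the Toeplitz relations, which you do via a gauge-invariant uniqueness argument. The boundedness estimate \(\norm{t(\xi)}\le\norm{\xi}\), the construction of \(\Toep^{\mathrm{u}}\), the intertwined \(\TT\)\nb-actions, and the reduction of injectivity of \(q\) to its injectivity on the fixed-point algebra via the faithful conditional expectations are all sound. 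The only place where real content is compressed is the final induction on the core: an element of the span of monomials of length at most \(m\) does not a priori decompose uniquely into homogeneous-length pieces, so one has to argue via the filtration by the ideals of length-exactly-\(k\) monomials (or, equivalently, via compression to the Fock levels \(\Hilm^{\otimes 0},\dotsc,\Hilm^{\otimes m}\)) to peel off the layers one at a time; this is precisely the content of Pimsner's faithfulness theorem for the Fock representation, and you rightly flag that it can be quoted rather than reproved. With that step either fleshed out or cited, the proof is complete.
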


\begin{lem}
  For any representation \((\pi, t)\)
  of~\(\Hilm\),
  there is a unique \Star{}homomorphism
  \(\pi^1\colon\Comp(\Hilm)\to B\)
  with \(\pi^1(\ket{\xi}\bra{\eta}) = t_\xi t^*_\eta\)
  for all \(\xi, \eta\in\Hilm\).
\end{lem}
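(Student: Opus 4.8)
The plan is to construct \(\pi^1\) first for the universal representation \((\varphi_\infty,t_\infty)\) on the Fock space \(\Hilm^+\), where it can be written down explicitly, and then to transport it to an arbitrary representation using the universal property of Proposition~\ref{universaltoeplitz}. Uniqueness needs no work: the generalised rank\nb-\(1\) operators \(\ket{\xi}\bra{\eta}\) span a dense subspace of \(\Comp(\Hilm)\), and any \Star{}homomorphism between \(\Cst\)\nb-algebras is contractive, hence continuous; so the prescribed values on the \(\ket{\xi}\bra{\eta}\) determine \(\pi^1\) everywhere.

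For existence on the Fock space, I would apply the identity \(T_\xi T_\eta^*=\ket{\xi}\bra{\eta}\otimes1\) from the discussion above with \(\Hilm[F]=\Hilm^{\otimes n}\): it shows that \(t_\xi t_\eta^*\in\Bound(\Hilm^+)\) is the block\nb-diagonal operator that vanishes on \(\Hilm^{\otimes0}=A\) and acts as \(\ket{\xi}\bra{\eta}\otimes1_{\Hilm^{\otimes n}}\) on \(\Hilm^{\otimes n+1}\). This suggests defining \(\pi^1_\infty\colon\Comp(\Hilm)\to\Bound(\Hilm^+)\) by sending \(T\) to the block\nb-diagonal operator that is \(0\) on \(A\) and \(T\otimes1_{\Hilm^{\otimes n}}\) on \(\Hilm^{\otimes n+1}\) for each \(n\ge0\). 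Each amplification \(T\mapsto T\otimes1_{\Hilm^{\otimes n}}\) is a \Star{}homomorphism \(\Comp(\Hilm)\to\Bound(\Hilm^{\otimes n+1})\) with \(\norm{T\otimes1}\le\norm{T}\), so the block\nb-diagonal \(\pi^1_\infty\) is a well\nb-defined contractive \Star{}homomorphism satisfying \(\pi^1_\infty(\ket{\xi}\bra{\eta})=t_\xi t_\eta^*\). Being continuous and carrying the dense set of rank\nb-\(1\) operators into the closed subalgebra \(\Toep_{\Hilm}\), it has image in \(\Toep_{\Hilm}\), so I view it as \(\pi^1_\infty\colon\Comp(\Hilm)\to\Toep_{\Hilm}\).

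Given an arbitrary representation \((\pi,t)\) in \(B\), Proposition~\ref{universaltoeplitz} supplies a unique \Star{}homomorphism \(\tilde\pi\colon\Toep_{\Hilm}\to B\) with \(\pi=\tilde\pi\circ\varphi_\infty\) and \(t=\tilde\pi\circ t_\infty\). I would then set \(\pi^1\defeq\tilde\pi\circ\pi^1_\infty\), a composite of \Star{}homomorphisms; it satisfies \(\pi^1(\ket{\xi}\bra{\eta})=\tilde\pi(t_\xi t_\eta^*)=\tilde\pi(t_\xi)\tilde\pi(t_\eta)^*=t(\xi)\,t(\eta)^*\), which is the value \(t_\xi t_\eta^*\) required by the statement. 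The only genuine content is the well\nb-definedness and boundedness of \(\ket{\xi}\bra{\eta}\mapsto t(\xi)t(\eta)^*\), that is, that \(\sum_i t(\xi_i)t(\eta_i)^*\) depends only on \(\sum_i\ket{\xi_i}\bra{\eta_i}\) and is dominated by its norm; multiplicativity and \Star{}preservation are by contrast the formal identities \(\ket{\xi_1}\bra{\eta_1}\ket{\xi_2}\bra{\eta_2}=\ket{\xi_1\braket{\eta_1}{\xi_2}}\bra{\eta_2}\) and \((\ket{\xi}\bra{\eta})^*=\ket{\eta}\bra{\xi}\), matched on the \(B\)\nb-side through axiom~(2) and the identity \(t(\xi)\pi(a)=t(\xi a)\). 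Routing the construction through the Fock representation is exactly what removes the analytic difficulty: there the explicit block\nb-diagonal formula makes well\nb-definedness and contractivity transparent, and the universal property then hands us the general case at no further cost.
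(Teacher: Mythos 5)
Your proof is correct. Note that the paper states this lemma without any proof at all --- it is a standard fact from the Cuntz--Pimsner literature (it appears, e.g., as Lemma~2.2 in Katsura's paper and goes back to Pimsner and to Kajiwara--Pinzari--Watatani) --- so there is no in-paper argument to compare against. The usual direct proof establishes well-definedness and contractivity of \(\sum_i\ket{\xi_i}\bra{\eta_i}\mapsto\sum_i t(\xi_i)t(\eta_i)^*\) by a norm computation: one squares, uses \(t(\eta_i)^*t(\eta_j)=\pi(\braket{\eta_i}{\eta_j})\) to fold the middle factor back into a compact operator, and iterates (or invokes a matrix amplification). Your route through the Fock representation is a legitimate and arguably cleaner alternative within the framework the paper sets up: the identity \(T_\xi T_\eta^*=\ket{\xi}\bra{\eta}\otimes1\), already displayed in the paper, shows that \(t_{\infty}(\xi)t_{\infty}(\eta)^*\) is exactly the block-diagonal amplification of \(\ket{\xi}\bra{\eta}\) (vanishing on the summand \(A\)), so the map \(T\mapsto 0\oplus\bigoplus_{n\ge0}(T\otimes1_{\Hilm^{\otimes n}})\) is manifestly a contractive \Star{}homomorphism extending the prescription, its range lies in \(\Toep_{\Hilm}\) by density and closedness, and composing with the \(\tilde\pi\) of Proposition~\ref{universaltoeplitz} transports it to an arbitrary representation. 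All the ingredients you use (adjointability of the amplification \(T\mapsto T\otimes1\), the universal property of \(\Toep_{\Hilm}\)) are available in the paper, and the uniqueness argument via density of the span of rank-one operators is standard. I see no gap.
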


\begin{prop}[\cite{Muhly-Solel:Tensor}*{Theorem 2.19}]
  \label{universalpropertycp}
  The representation~\(\tilde{\pi}\)
  of~\(\Toep_{\Hilm}\)
  associated to a representation \((\pi, t)\)
  of \(\Hilm\)
  factors through the quotient~\(\CP_{J,\Hilm}\)
  of~\(\Toep_{\Hilm}\) if and only if
  \begin{equation}
    \label{covariancecondition}
    \pi(a)=\pi^1(\varphi(a))\qquad
    \text{for all }a\in J.
  \end{equation}
  In this case, we call the representation \emph{covariant on~\(J\).}
\end{prop}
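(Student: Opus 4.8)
The plan is to reduce the factorisation condition to the covariance condition~\eqref{covariancecondition} through an explicit computation, inside $\Toep_{\Hilm}$, of the generators of the ideal~$\Comp(\Hilm^+J)$.

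First I would note that $\tilde{\pi}$ factors through $\CP_{J,\Hilm}=\Toep_{\Hilm}/\Comp(\Hilm^+J)$ if and only if $\Comp(\Hilm^+J)\subseteq\ker\tilde{\pi}$. Since $\ker\tilde{\pi}$ is a closed two-sided ideal and, as recalled above, $\Comp(\Hilm^+J)$ is the ideal of~$\Toep_{\Hilm}$ generated by $J_0=\varphi_\infty(J)P_0$, this holds if and only if $\tilde{\pi}(\varphi_\infty(a)P_0)=0$ for every $a\in J$. Thus it suffices to identify the element $\tilde{\pi}(\varphi_\infty(a)P_0)$ of~$B$.

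The key step is the operator identity
\[
\varphi_\infty(a)\,P_0 = \varphi_\infty(a) - \kappa(\varphi(a))
\quad\text{in } \Toep_{\Hilm},
\]
valid for all $a\in J$, where $\kappa\colon\Comp(\Hilm)\to\Toep_{\Hilm}$ denotes the \Star{}homomorphism of the previous lemma applied to the universal representation $(\varphi_\infty,t_\infty)$, so that $\kappa(\ket{\xi}\bra{\eta})=t_\infty(\xi)\,t_\infty(\eta)^*$. Here $\kappa(\varphi(a))$ is defined because $\varphi(a)\in\Comp(\Hilm)$ for $a\in J$. To verify the identity I would write $\varphi(a)$ as a norm-limit of finite sums of rank-one operators $\ket{\xi}\bra{\eta}$ and check it on each summand, using norm-continuity of $\kappa$ and~$\varphi_\infty$. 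On $\Hilm^{\otimes n}$ with $n\ge1$, the operator $t_\infty(\xi)\,t_\infty(\eta)^*$ restricts to $\ket{\xi}\bra{\eta}\otimes1$ (the relation recalled before Lemma~\ref{lem:Ttens_compact}), while $\varphi_\infty(a)$ restricts to $\varphi(a)\otimes1$; summing over the rank-one pieces these agree, so the difference vanishes in positive degrees. On $\Hilm^{\otimes0}=A$ the operator $t_\infty(\eta)^*$ vanishes, whereas $\varphi_\infty(a)$ acts by left multiplication by~$a$. Hence the difference is supported on $\Hilm^{\otimes0}$ and equals $\varphi_\infty(a)P_0$.

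Finally I would apply~$\tilde{\pi}$. By Proposition~\ref{universaltoeplitz} we have $\tilde{\pi}\circ\varphi_\infty=\pi$, and moreover $\pi^1=\tilde{\pi}\circ\kappa$, since both are \Star{}homomorphisms $\Comp(\Hilm)\to B$ agreeing on the generators: $\tilde{\pi}(\kappa(\ket{\xi}\bra{\eta}))=\tilde{\pi}(t_\infty(\xi))\,\tilde{\pi}(t_\infty(\eta))^*=t(\xi)\,t(\eta)^*=\pi^1(\ket{\xi}\bra{\eta})$. Applying $\tilde{\pi}$ to the identity of the third paragraph then yields
\[
\tilde{\pi}(\varphi_\infty(a)P_0) = \pi(a) - \pi^1(\varphi(a))
\]
for all $a\in J$, so the left-hand side vanishes for every $a\in J$ precisely when $\pi(a)=\pi^1(\varphi(a))$ for every $a\in J$. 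Together with the reduction described above, this is the claimed equivalence. The only real obstacle is the operator identity of the third paragraph; everything else is formal manipulation with the universal property. The one point requiring care there is that $P_0$ genuinely extracts the degree-zero part, so that $\varphi_\infty(a)-\kappa(\varphi(a))$ has no contribution in positive degrees and the two block-diagonal operators cancel there exactly.
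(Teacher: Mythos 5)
Your proof is correct. Note that the paper does not prove this proposition at all — it is quoted from Muhly--Solel (Theorem 2.19) — so there is no internal argument to compare against; your reduction to the identity \(\varphi_\infty(a)P_0=\varphi_\infty(a)-\kappa(\varphi(a))\) in \(\Toep_{\Hilm}\), followed by applying \(\tilde\pi\) and using \(\tilde\pi\circ\varphi_\infty=\pi\) and \(\tilde\pi\circ\kappa=\pi^1\), is the standard argument and all the steps (the cancellation in positive degrees via \(T_\xi T_\eta^*=\ket{\xi}\bra{\eta}\otimes 1\), the vanishing of \(t_\infty(\eta)^*\) on \(\Hilm^{\otimes 0}\), and the reduction of the factorisation condition to the vanishing of \(\tilde\pi\) on the generators \(\varphi_\infty(J)P_0\) of the ideal \(\Comp(\Hilm^+J)\)) check out.
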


Let \((\pi_J, t_J)\)
be the canonical representation of~\(\Hilm\)
in~\(\CP_{J,\Hilm}\).
Proposition~\ref{universalpropertycp} says that \((\pi_J, t_J)\)
is the universal representation of~\(\Hilm\)
that is covariant on~\(J\).

\begin{prop}
  A representation~\((\pi,t)\)
  in~\(B\)
  is covariant on~\(J\)
  if and only if \(\pi(J) \subseteq t(\Hilm)\cdot B\).
  \label{pro:covariant_through_subspaces}
\end{prop}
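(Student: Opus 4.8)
The plan is to study, for each $a\in J$, the element $d_a\defeq \pi(a)-\pi^1(\varphi(a))\in B$ (it lies in~$B$ because $\pi^1$ maps into~$B$). By Proposition~\ref{universalpropertycp} the representation is covariant on~$J$ precisely when $d_a=0$ for all $a\in J$, so the task is to relate the vanishing of~$d_a$ to the containment $\pi(J)\subseteq t(\Hilm)\cdot B$.

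The first and decisive step is the auxiliary identity $\pi^1(k)\,t(\xi)=t(k\xi)$ for all $k\in\Comp(\Hilm)$ and $\xi\in\Hilm$. For a rank-one operator $k=\ket{\eta}\bra{\zeta}$ this is the computation $t(\eta)t(\zeta)^*t(\xi)=t(\eta)\pi(\braket{\zeta}{\xi}_A)=t(\eta\braket{\zeta}{\xi}_A)$, using the two representation axioms together with the relation $t(\eta)\pi(a)=t(\eta a)$; linearity and continuity of~$\pi^1$ give the general case. Taking $k=\varphi(a)\in\Comp(\Hilm)$ for $a\in J$ and comparing with $\pi(a)t(\xi)=t(\varphi(a)\xi)$ then shows $d_a\,t(\xi)=0$ for every $\xi\in\Hilm$, and hence $d_a\cdot\bigl(t(\Hilm)\cdot B\bigr)=0$. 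I will also record, for use below, that $\pi^1(\varphi(b))\in t(\Hilm)\cdot B$ for all $b\in J$: writing $\varphi(b)\in\Comp(\Hilm)$ as a norm-limit of finite sums $\sum_i\ket{\xi_i}\bra{\eta_i}$ and applying the continuous map~$\pi^1$ exhibits $\pi^1(\varphi(b))$ as a limit of sums $\sum_i t(\xi_i)t(\eta_i)^*$, each term of which lies in $t(\Hilm)\cdot B$. With these facts the forward implication is immediate: if $(\pi,t)$ is covariant on~$J$, then $\pi(a)=\pi^1(\varphi(a))\in t(\Hilm)\cdot B$ for all $a\in J$, so $\pi(J)\subseteq t(\Hilm)\cdot B$.

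The converse is where the real work lies, and I expect the main obstacle to be producing the vanishing of~$d_a$ from a containment statement that a priori only constrains~$\pi(a)$. The trick is to pass to $d_a d_a^*$. Since $J$ is a $\ast$\nb-ideal we have $a^*\in J$, so the hypothesis gives $\pi(a^*)\in t(\Hilm)\cdot B$, while the recorded fact gives $\pi^1(\varphi(a^*))\in t(\Hilm)\cdot B$; as $\varphi$ and~$\pi^1$ are $\ast$\nb-homomorphisms we have $d_a^*=\pi(a^*)-\pi^1(\varphi(a^*))$, so $d_a^*\in t(\Hilm)\cdot B$. Combining this with $d_a\cdot\bigl(t(\Hilm)\cdot B\bigr)=0$ yields $d_a d_a^*=0$, whence $d_a=0$ in the $\Cst$\nb-algebra~$B$. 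As $a\in J$ was arbitrary, $(\pi,t)$ is covariant on~$J$, completing the proof.
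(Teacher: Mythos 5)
Your proof is correct and follows essentially the same route as the paper: both directions hinge on showing $\pi^1(\varphi(a))\in t(\Hilm)\cdot B$ and, for the converse, on combining $\bigl(\pi(a)-\pi^1(\varphi(a))\bigr)\cdot t(\Hilm)\cdot B=0$ with $\pi(a^*),\pi^1(\varphi(a^*))\in t(\Hilm)\cdot B$ to conclude $d_a d_a^*=0$. The only cosmetic difference is that you verify the identity $\pi^1(\varphi(a))t(\xi)=t(\varphi(a)\xi)$ by the rank-one computation, where the paper cites Katsura's Lemma 2.4.
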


\begin{proof}
  Let \(a\in J\).
  Then \(\pi^1(\varphi(a))\)
  is contained in the closed linear span of \(t(\Hilm)t(\Hilm)^*\)
  and hence in \(t(\Hilm)\cdot B\).
  So \(\pi(a) \in t(\Hilm)\cdot B\)
  is necessary for \(\pi(a) = \pi^1(\varphi(a))\).
  Conversely, assume \(\pi(a) \in t(\Hilm)\cdot B\)
  for all \(a\in J\).
  We have
  \(\pi(a)\cdot t(\xi) = t(\varphi(a)\xi) = \pi^1(\varphi(a)) t(\xi)\)
  for all \(\xi\in\Hilm\)
  (see \cite{Katsura:Cstar_correspondences}*{Lemma 2.4}).  Hence
  \((\pi(a)-\pi^1(\varphi(a)))\cdot t(\Hilm)\cdot B=0\).
  Since \(\pi(a^*),\pi^1(\varphi(a^*)) \in t(\Hilm)\cdot B\),
  we get
  \((\pi(a)-\pi^1(\varphi(a)))\cdot (\pi(a)-\pi^1(\varphi(a)))^*=0\).
  This is equivalent to \(\pi(a) = \pi^1(\varphi(a))\).
\end{proof}

\subsection{Gauge action and Fell bundle structure}

Let \(\Hilm\colon A\leadsto A\)
be a correspondence and let \(J\idealin A\)
be an ideal with \(\varphi(J) \subseteq \Comp(\Hilm)\).
If \((\pi,t)\)
is a representation of~\(\Hilm\)
that is covariant on~\(J\),
then so is \((\pi,z\cdot t)\)
for \(z\in\TT\).
This operation on representations comes from an automorphism of the
relative Cuntz--Pimsner algebra~\(\CP_{J,\Hilm}\)
by its universal property.  These automorphisms define a continuous
action~\(\gamma\)
of~\(\TT\)
on~\(\CP_{J,\Hilm}\), called the \emph{gauge action}.  Let
\[
\CP^n_{J,\Hilm}\defeq \setgiven{b\in\CP_{J,\Hilm}}
{\gamma_z(b) = z^nb \text{ for all }z\in\TT}
\]
for \(n\in\ZZ\)
be the \(n\)th
spectral subspace.  These spectral subspaces form a Fell bundle
over~\(\ZZ\),
that is,
\(\CP^n_{J,\Hilm} \cdot \CP^m_{J,\Hilm} \subseteq
\CP^{n+m}_{J,\Hilm}\)
and \((\CP^n_{J,\Hilm})^* = \CP^{-n}_{J,\Hilm}\)
for all \(n,m\in\ZZ\).
In particular, for \(J=\{0\}\)
we get a gauge action on~\(\Toep_{\Hilm}\)
and corresponding spectral subspaces
\(\Toep^n_{\Hilm} \subseteq \Toep_{\Hilm}\).
Explicitly, the gauge action on~\(\Toep_{\Hilm}\)
comes from the obvious \(\NN\)\nb-grading
on~\(\Hilm^+\):
if \(x\in\Toep_{\Hilm}\),
then \(x\in\Toep^n_{\Hilm}\)
if and only if \(x(\Hilm^{\otimes k}) \subseteq \Hilm^{\otimes n+k}\)
for all \(k\in\NN\);
this means \(x|_{\Hilm^{\otimes k}} = 0\)
if \(k+n<0\).
And~\(\CP_{\Hilm,J}^n\)
is the image of~\(\Toep_{\Hilm}^n\) in~\(\CP_{\Hilm,J}\).

\begin{lem}
  \label{fibers}
  Let \(n\in\ZZ\).
  The subspace~\(\CP^n_{J,\Hilm}\)
  in~\(\CP_{J,\Hilm}\)
  is the closed linear span of
  \(t_J(\xi_1)t_J(\xi_2)\dotsm t_J(\xi_k)\cdot t^*_J(\eta_l)\dotsm
  t^*_J(\eta_2)t^*_J(\eta_1)\)
  for \(\xi_i,\eta_j\in \Hilm\), \(k-l=n\).  If \(n\in\NN\), then
  \[
  \CP^n_{J,\Hilm}\cong \Hilm^{\otimes n}\otimes_A\CP^0_{J,\Hilm}
  \]
  as a correspondence \(A\leadsto \CP^0_{J,\Hilm}\).
  The Fell bundle \((\CP^k_{J,\Hilm})_{k\in\ZZ}\)
  is \emph{semi-saturated}, that is,
  \(\CP^k_{J,\Hilm} \cdot \CP^l_{J,\Hilm}=\CP^{k+l}_{J,\Hilm}\)
  if \(k,l\ge0\).
\end{lem}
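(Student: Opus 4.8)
The plan is to prove the three assertions in turn, as each relies on the previous one; the running tool is the gauge action~$\gamma$ and its spectral projections $P_n(x)\defeq\int_\TT z^{-n}\gamma_z(x)\,\mathrm{d}z$, which are contractions of $\CP_{J,\Hilm}$ onto $\CP^n_{J,\Hilm}$ because $\gamma$ is strongly continuous. First I would establish the normal-form description. The algebra $\CP_{J,\Hilm}$ is generated as a $\Cst$\nb-algebra by $\pi_J(A)$ and $t_J(\Hilm)$, hence it is the closed linear span of words in $\pi_J(a)$, $t_J(\xi)$ and $t_J(\xi)^*$. The representation relations $\pi_J(a)t_J(\xi)=t_J(\varphi(a)\xi)$, $t_J(\xi)\pi_J(a)=t_J(\xi a)$, $t_J(\xi)^*t_J(\eta)=\pi_J(\braket{\xi}{\eta}_A)$, and their adjoints, allow me to absorb each $\pi_J$\nb-factor into a neighbouring $t_J$ or $t_J^*$ and to replace any annihilation operator immediately followed by a creation operator by a $\pi_J$\nb-factor; iterating brings every word into the asserted normal form $t_J(\xi_1)\dotsm t_J(\xi_k)\,t^*_J(\eta_l)\dotsm t^*_J(\eta_1)$ (with a bare $\pi_J(a)$ as the only extra degree\nb-$0$ case). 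Since $P_n$ fixes a normal form of degree $k-l$ when $k-l=n$ and annihilates it otherwise, applying the continuous surjection $P_n$ to the dense span of all normal forms yields that the degree\nb-$n$ normal forms span $\CP^n_{J,\Hilm}$ densely.

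For the isomorphism $\CP^n_{J,\Hilm}\cong\Hilm^{\otimes n}\otimes_A\CP^0_{J,\Hilm}$ with $n\in\NN$, I would use the induced map $t^{(n)}_J(\xi_1\otimes\dots\otimes\xi_n)\defeq t_J(\xi_1)\dotsm t_J(\xi_n)$, which the relations above show to be well defined on the balanced tensor product and to satisfy $t^{(n)}_J(\xi)^*t^{(n)}_J(\eta)=\pi_J(\braket{\xi}{\eta}_A)$ — the $n$\nb-fold iterate of representation condition~(2). Regard $\CP^n_{J,\Hilm}$ as a right Hilbert $\CP^0_{J,\Hilm}$\nb-module under $\braket{x}{y}=x^*y$ (which lands in $\CP^0_{J,\Hilm}$ since $(\CP^n_{J,\Hilm})^*\CP^n_{J,\Hilm}\subseteq\CP^0_{J,\Hilm}$), with left $A$\nb-action through $\pi_J$. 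Then $\xi\otimes b\mapsto t^{(n)}_J(\xi)\,b$ respects the $A$\nb-balancing and, by the displayed identity, preserves the $\CP^0_{J,\Hilm}$\nb-valued inner products, so it extends to an isometric, left $A$\nb-linear and right $\CP^0_{J,\Hilm}$\nb-linear map $\Hilm^{\otimes n}\otimes_A\CP^0_{J,\Hilm}\to\CP^n_{J,\Hilm}$. Being isometric it has closed range; and by the normal-form description every degree\nb-$n$ generator factors as $t_J(\xi_1)\dotsm t_J(\xi_n)$ times a degree\nb-$0$ element, so the range is dense. Hence the map is the desired unitary.

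Semi-saturation is then immediate. For $k,l\ge0$ the inclusion $\CP^k_{J,\Hilm}\cdot\CP^l_{J,\Hilm}\subseteq\CP^{k+l}_{J,\Hilm}$ is the Fell-bundle property, while a degree\nb-$(k+l)$ generator $t_J(\xi_1)\dotsm t_J(\xi_p)t^*_J(\eta_q)\dotsm t^*_J(\eta_1)$ with $p-q=k+l$ (so $p\ge k$) splits as $\bigl(t_J(\xi_1)\dotsm t_J(\xi_k)\bigr)\bigl(t_J(\xi_{k+1})\dotsm t_J(\xi_p)t^*_J(\eta_q)\dotsm t^*_J(\eta_1)\bigr)$, a product of a degree\nb-$k$ and a degree\nb-$l$ element, giving $\CP^{k+l}_{J,\Hilm}\subseteq\CP^k_{J,\Hilm}\cdot\CP^l_{J,\Hilm}$.

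I expect the main obstacle to be the first step: turning the routine normal-form generation of the whole algebra into an exact description of each individual spectral subspace via the projections~$P_n$, and in particular accounting correctly for the bare generators $\pi_J(A)\subseteq\CP^0_{J,\Hilm}$ in the degree\nb-$0$ case. The inner-product computation in the second step is the other delicate point, but it reduces to the standard iterated identity for the induced Fock representation.
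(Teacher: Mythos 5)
Your proof is correct and follows essentially the same route as the paper: the spectral projections $p_n(x)=\int_\TT z^{-n}\gamma_z(x)\,\mathrm{d}z$ applied to the dense span of normal-form monomials give the first claim, the map $\xi_1\otimes\dotsb\otimes\xi_n\otimes y\mapsto t_J(\xi_1)\dotsm t_J(\xi_n)y$ is checked to be an isometry with dense range for the second, and splitting off the first $k$ creation operators gives semi-saturation. Your extra care with the bare generators $\pi_J(A)$ in degree zero and with the explicit factorisation of degree-$(k+l)$ monomials fills in details the paper leaves as ``obvious''.
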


\begin{proof}
  Let \(b\in\CP^n_{J,\Hilm}\)
  and let \(\epsilon>0\).
  Then~\(b\)
  is \(\epsilon\)\nb-close
  to a finite linear combination~\(b_\epsilon\)
  of monomials
  \(t_J(\xi_1)t_J(\xi_2)\dotsm t_J(\xi_k)\cdot t^*_J(\eta_l)\dotsm
  t^*_J(\eta_2)t^*_J(\eta_1)\) with \(k,l\in\NN\).  Define
  \[
  p_n(x) \defeq \int_{\TT} z^{-n} \gamma_z(x) \,\mathrm{d}z,\qquad
  x\in\CP_{J,\Hilm}.
  \]
  This is a contractive projection from~\(\CP_{J,\Hilm}\)
  onto~\(\CP^n_{J,\Hilm}\).
  Since \(p_n(b)=b\)
  and \(\norm{p_n}\le1\),
  we have \(\norm{b-p_n(b_\epsilon)}\le\epsilon\)
  as well.  Inspection shows that \(p_n\)
  maps a monomial
  \(t_J(\xi_1)t_J(\xi_2)\dotsm t_J(\xi_k)\cdot t^*_J(\eta_l)\dotsm
  t^*_J(\eta_2)t^*_J(\eta_1)\)
  to itself if \(k-l=n\)
  and kills it otherwise.  Hence~\(\CP^n_{J,\Hilm}\)
  is the closed linear span of such monomials with \(k-l=n\).

  The monomials generating~\(\CP^{k+l}_{J,\Hilm}\)
  for \(k,l\ge0\)
  are obviously in \(\CP^k_{J,\Hilm} \cdot \CP^l_{J,\Hilm}\).
  Hence the first statement immediately implies the last one.  There
  is an isometric \(A,\CP^0_{J,\Hilm}\)-bimodule map
  \[
  \Hilm^{\otimes n} \otimes_A\CP^0_{J,\Hilm} \to \CP^n_{J,\Hilm},\qquad
  \xi_1\otimes\dotsb \otimes \xi_n \otimes y\mapsto
  t_J(\xi_1)\dotsm t_J(\xi_n) \cdot y.
  \]
  The first statement implies that its image is dense, so it is
  unitary.
\end{proof}

The Fell bundle \((\CP^n_{J,\Hilm})_{n\in\ZZ}\)
need not be saturated, that is,
\(\CP^n_{J,\Hilm} \cdot \CP^{-n}_{J,\Hilm}\)
may differ from~\(\CP^0_{J,\Hilm}\).

\begin{thm}
  \label{the:CP_Hilbi_crossed}
  The relative Cuntz--Pimsner algebra is \(\TT\)\nb-equivariantly
  isomorphic to the crossed product of~\(\CP^0_{J,\Hilm}\)
  by the Hilbert \(\CP^0_{J,\Hilm}\)-bimodule
  \(\CP^1_{J,\Hilm}\)
  and to the full or reduced section \(\Cst\)\nb-algebra
  of the Fell bundle \((\CP^n_{J,\Hilm})_{n\in\ZZ}\).
\end{thm}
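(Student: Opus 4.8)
The plan is to read both isomorphisms off the equivalence between $\Cst$\nb-algebras carrying a $\TT$\nb-action and Fell bundles over~$\ZZ$ from~\cite{Abadie-Eilers-Exel:Morita_bimodules}, applied to the gauge action~$\gamma$ on~$\CP_{J,\Hilm}$. By Lemma~\ref{fibers} the spectral subspaces of~$\gamma$ are exactly the fibres of the Fell bundle $(\CP^n_{J,\Hilm})_{n\in\ZZ}$, so the spectral decomposition of~$\gamma$ ought to present $\CP_{J,\Hilm}$ as the section $\Cst$\nb-algebra of this bundle, $\TT$\nb-equivariantly for the canonical dual action. Since every map in play will be the identity on fibres, equivariance comes for free, and the phrase ``full or reduced'' reflects that $\ZZ$ is amenable, so that the two section $\Cst$\nb-algebras coincide.

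First I would check that the averaging map $p_0(x)\defeq\int_\TT\gamma_z(x)\,\mathrm{d}z$, already shown in the proof of Lemma~\ref{fibers} to be a contractive projection onto~$\CP^0_{J,\Hilm}$, is a \emph{faithful} conditional expectation: if $p_0(x^*x)=0$, then the nonnegative continuous function $z\mapsto\gamma_z(x^*x)$ integrates to~$0$, so it vanishes identically and $x^*x=0$ at $z=1$. Together with the density of the span of the $\CP^n_{J,\Hilm}$ from Lemma~\ref{fibers}, this realises $\CP_{J,\Hilm}$ as a topologically $\ZZ$\nb-graded $\Cst$\nb-algebra with faithful conditional expectation onto the unit fibre, hence as the reduced section $\Cst$\nb-algebra of $(\CP^n_{J,\Hilm})_{n\in\ZZ}$; amenability of~$\ZZ$ then upgrades ``reduced'' to ``full''.

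For the crossed-product description I would invoke the second half of~\cite{Abadie-Eilers-Exel:Morita_bimodules}. By Lemma~\ref{fibers} the Fell bundle $(\CP^n_{J,\Hilm})_{n\in\ZZ}$ is semi-saturated, hence generated by~$\CP^0_{J,\Hilm}$ and the Hilbert $\CP^0_{J,\Hilm}$-bimodule~$\CP^1_{J,\Hilm}$. As recalled in the introduction, the section $\Cst$\nb-algebra of such a bundle is precisely the crossed product for this Hilbert bimodule. Composing with the isomorphism of the previous paragraph gives the asserted $\TT$\nb-equivariant isomorphism between $\CP_{J,\Hilm}$ and the crossed product of $\CP^0_{J,\Hilm}$ by~$\CP^1_{J,\Hilm}$.

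The main obstacle is to promote the evident dense span of the~$\CP^n_{J,\Hilm}$ to a genuine isomorphism, rather than a mere surjection. The regular representation built from~$p_0$ maps $\CP_{J,\Hilm}$ onto the reduced section $\Cst$\nb-algebra with dense, hence closed, range, and its injectivity is exactly the faithfulness of~$p_0$ established above; amenability of~$\ZZ$ then identifies the reduced with the full section algebra, and the semi-saturated case of~\cite{Abadie-Eilers-Exel:Morita_bimodules} identifies the latter with the bimodule crossed product. Everything else --- the Fell bundle axioms, verified in Lemma~\ref{fibers}, and the compatibility of all the maps with the gradings, hence with the gauge and dual $\TT$\nb-actions --- is routine bookkeeping.
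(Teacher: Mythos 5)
Your proof is correct and follows essentially the same route as the paper, whose entire argument is to note semi-saturation (Lemma~\ref{fibers}) and then cite \cite{Abadie-Eilers-Exel:Morita_bimodules}. You simply fill in the details the paper leaves to that reference --- the faithful conditional expectation~$p_0$, the identification with the reduced section $\Cst$\nb-algebra, and amenability of~$\ZZ$ --- all of which are sound.
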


\begin{proof}
  The Fell bundle \((\CP^n_{J,\Hilm})_{n\in\ZZ}\)
  is semi-saturated by Lemma~\ref{fibers}.  Now the results of
  Abadie--Eilers--Exel~\cite{Abadie-Eilers-Exel:Morita_bimodules}
  imply our claims.
\end{proof}

Theorem~\ref{the:CP_Hilbi_crossed} splits the construction of relative
Cuntz--Pimsner algebras into two steps.  The first builds the Hilbert
\(\CP^0_{J,\Hilm}\)-bimodule
\(\CP^1_{J,\Hilm}\),
the second takes the crossed product for this Hilbert bimodule.  A
Hilbert bimodule~\(\Hilm[G]\)
on a \(\Cst\)\nb-algebra~\(B\)
is the same as a Morita--Rieffel equivalence between two ideals
in~\(B\)
or, briefly, a partial Morita--Rieffel equivalence on~\(B\)
(this point of view is explained
in~\cite{Buss-Meyer:Actions_groupoids}).  The crossed product
\(\CP^0_{J,\Hilm}\rtimes \CP^1_{J,\Hilm}\)
generalises the partial crossed product for a partial automorphism.
Many results about crossed products for automorphisms extend to
Hilbert bimodule crossed products.  In particular, the standard
criteria for simplicity and detection and separation of ideals are
extended in~\cite{Kwasniewski-Meyer:Aperiodicity}.

\begin{prop}
  \label{katsurasalgebra}
  The following conditions are equivalent:
  \begin{enumerate}
  \item \label{katsurasalgebra_1}%
    the map \(\pi_J\colon A\to\CP^0_{J,\Hilm}\)
    is an isomorphism;

  \item \label{katsurasalgebra_2}%
    the map \(\varphi\colon J\to\Comp(\Hilm)\) is an isomorphism;

  \item \label{katsurasalgebra_3}%
    the correspondence~\(\Hilm\)
    comes from a Hilbert bimodule and \(J=I_{\Hilm}\).
  \end{enumerate}
\end{prop}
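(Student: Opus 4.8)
The plan is to treat condition~\ref{katsurasalgebra_2} as the hub: I would prove \ref{katsurasalgebra_2}\(\Leftrightarrow\)\ref{katsurasalgebra_3} directly from Lemma~\ref{characterization-hilbertbimodule}, and then close the loop with \ref{katsurasalgebra_2}\(\Rightarrow\)\ref{katsurasalgebra_1} and \ref{katsurasalgebra_1}\(\Rightarrow\)\ref{katsurasalgebra_2}. Throughout I would use the standard fact that two ideals \(J,K\) of a \(\Cst\)-algebra satisfy \(J\cap K=\overline{JK}\), so that \(J\cap K=\{0\}\) is equivalent to \(J\subseteq K^\perp\).

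For \ref{katsurasalgebra_2}\(\Leftrightarrow\)\ref{katsurasalgebra_3}, the key point is that whenever \(\Hilm\) is a Hilbert bimodule, the unique ideal \(I\) supplied by Lemma~\ref{characterization-hilbertbimodule} (with \(\varphi\colon I\xrightarrow{\sim}\Comp(\Hilm)\)) coincides with Katsura's ideal \(I_{\Hilm}\). Indeed, \(\varphi(I)=\Comp(\Hilm)\) gives \(I\subseteq\varphi^{-1}(\Comp(\Hilm))\), and injectivity of \(\varphi|_I\) gives \(I\cap\ker\varphi=\{0\}\), hence \(I\subseteq(\ker\varphi)^\perp\); so \(I\subseteq I_{\Hilm}\). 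Conversely, for \(x\in I_{\Hilm}\) we have \(\varphi(x)\in\Comp(\Hilm)=\varphi(I)\), so \(\varphi(x)=\varphi(y)\) with \(y\in I\), and then \(x-y\in\ker\varphi\cap(\ker\varphi)^\perp=\{0\}\), whence \(x=y\in I\). Granting \(I=I_{\Hilm}\), condition~\ref{katsurasalgebra_2} says precisely that \(J\) is an ideal realising \(\Hilm\) as a Hilbert bimodule in the sense of Lemma~\ref{characterization-hilbertbimodule}; by the uniqueness clause of that lemma this holds exactly when \(\Hilm\) comes from a Hilbert bimodule and \(J=I=I_{\Hilm}\), which is~\ref{katsurasalgebra_3}.

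For \ref{katsurasalgebra_2}\(\Rightarrow\)\ref{katsurasalgebra_1}, note first that \ref{katsurasalgebra_2} forces \(J\subseteq I_{\Hilm}\) (by the inclusion proved above), so \(\pi_J\) is injective by Proposition~\ref{pro:Katsura_injective}. For surjectivity onto \(\CP^0_{J,\Hilm}\) I would use the description in Lemma~\ref{fibers} of \(\CP^0_{J,\Hilm}\) as the closed span of monomials \(t_J(\xi_1)\dotsm t_J(\xi_k)\,t_J^*(\eta_k)\dotsm t_J^*(\eta_1)\) and induct on the length~\(k\). The innermost factor \(t_J(\xi_k)t_J^*(\eta_k)=\pi_J^1(\ket{\xi_k}\bra{\eta_k})\) lies in \(\pi_J^1(\Comp(\Hilm))=\pi_J^1(\varphi(J))=\pi_J(J)\), using \(\Comp(\Hilm)=\varphi(J)\) from~\ref{katsurasalgebra_2} and the covariance condition~\eqref{covariancecondition}; writing it as \(\pi_J(a)\) with \(a\in J\) and absorbing it through \(t_J(\xi_{k-1})\pi_J(a)=t_J(\xi_{k-1}a)\) shortens the monomial by one. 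Hence every such monomial lies in \(\pi_J(A)\), so \(\CP^0_{J,\Hilm}=\pi_J(A)\) and \(\pi_J\) is an isomorphism.

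The implication \ref{katsurasalgebra_1}\(\Rightarrow\)\ref{katsurasalgebra_2} is where I expect the real work. Injectivity of \(\pi_J\) again gives \(J\subseteq(\ker\varphi)^\perp\) (a nonzero element of \(J\cap\ker\varphi\) would map to \(0\) in \(\CP_{J,\Hilm}\)), so \(\varphi|_J\) is injective; it remains to show \(\varphi(J)=\Comp(\Hilm)\). Given \(k\in\Comp(\Hilm)\), the degree-zero element \(\pi_J^1(k)\) lies in \(\CP^0_{J,\Hilm}=\pi_J(A)\), say \(\pi_J^1(k)=\pi_J(a)\). Multiplying on the right by \(t_J(\zeta)\) and comparing \(\pi_J(a)t_J(\zeta)=t_J(\varphi(a)\zeta)\) with \(\pi_J^1(k)t_J(\zeta)=t_J(k\zeta)\), together with injectivity of \(t_J\) (it is isometric because the injective homomorphism \(\pi_J\) is isometric), yields \(\varphi(a)=k\), so \(\pi_J(a)=\pi_J^1(\varphi(a))\). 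The crux is to conclude \(a\in J\), i.e.\ to show that for any \(a\) with \(\varphi(a)\in\Comp(\Hilm)\) one has \(\pi_J(a)=\pi_J^1(\varphi(a))\) if and only if \(a\in J\). In the Fock picture a preimage of \(\pi_J(a)-\pi_J^1(\varphi(a))\) in \(\Toep_{\Hilm}\) is the operator \(\varphi_\infty(a)P_0\), the left action of~\(a\) on \(\Hilm^{\otimes0}=A\), which sits in the corner \(P_0\Bound(\Hilm^+)P_0\). Matching this against the kernel \(\Comp(\Hilm^+J)\) of \(\Toep_{\Hilm}\to\CP_{J,\Hilm}\) via \(P_0\Comp(\Hilm^+J)P_0=\Comp(J)\cong J\) shows that \(\varphi_\infty(a)P_0\in\Comp(\Hilm^+J)\) exactly when \(a\in J\). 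This corner computation is the main obstacle; granting it, \(a\in J\) and hence \(k=\varphi(a)\in\varphi(J)\), so \(\varphi(J)=\Comp(\Hilm)\) and~\ref{katsurasalgebra_2} follows.
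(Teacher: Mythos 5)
Your proof is correct, but it takes a genuinely different route from the paper's. The part you share with the paper is \ref{katsurasalgebra_2}\({}\iff{}\)\ref{katsurasalgebra_3}: both arguments combine Lemma~\ref{characterization-hilbertbimodule} with the observation that the unique ideal \(I\) satisfying \(\varphi|_I\colon I\cong\Comp(\Hilm)\) must be \(I_{\Hilm}\). The difference lies in how condition \ref{katsurasalgebra_1} is treated. The paper never proves \ref{katsurasalgebra_1}\({}\iff{}\)\ref{katsurasalgebra_2} directly: it cites Katsura's Proposition~5.18 for the whole equivalence in the special case \(J=I_{\Hilm}\) and then shows that \ref{katsurasalgebra_1} forces \(J=I_{\Hilm}\) (Proposition~\ref{pro:Katsura_injective} gives \(J\subseteq I_{\Hilm}\), whence \(\CP_{J,\Hilm}=\CP_{I_{\Hilm},\Hilm}\), so \(\Comp(\Hilm^+J)=\Comp(\Hilm^+I_{\Hilm})\), and compressing to the summand \(A\subseteq\Hilm^+\) yields \(J=I_{\Hilm}\)). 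You instead prove \ref{katsurasalgebra_1}\({}\iff{}\)\ref{katsurasalgebra_2} from scratch: the monomial-reduction induction for surjectivity of~\(\pi_J\), and, for the converse, the identification of a lift of \(\pi_J(a)-\pi_J^1(\varphi(a))\) with the corner element \(\varphi_\infty(a)P_0\) together with \(P_0\Comp(\Hilm^+J)P_0=\Comp(AJ)\cong J\), which correctly shows that covariance at~\(a\) holds in \(\CP_{J,\Hilm}\) exactly for \(a\in J\). That corner computation is sound (the forward direction uses that \(\varphi_\infty(a)P_0\) commutes with~\(P_0\), the backward direction uses Cohen factorisation \(J=J\cdot J\)), and it is essentially the same compression to \(A\subseteq\Hilm^+\) that the paper deploys only at the very last step. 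What your version buys is independence from Katsura's Proposition~5.18, at the cost of redoing the Fock-space computations that this citation encapsulates; the paper's version is shorter but outsources the substantive case \(J=I_{\Hilm}\) to the literature.
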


\begin{proof}
  If \(J=I_{\Hilm}\)
  is Katsura's ideal, then everything follows from
  \cite{Katsura:Cstar_correspondences}*{Proposition~5.18}.  So it
  remains to observe that \ref{katsurasalgebra_1} and
  \ref{katsurasalgebra_2} fail if \(J\neq I_{\Hilm}\).
  Lemma~\ref{characterization-hilbertbimodule} shows that~\(\Hilm\)
  comes from a Hilbert bimodule if and only if there is an ideal~\(I\)
  in~\(A\)
  so that \(\varphi|_I\colon I \to \Comp(\Hilm)\)
  is an isomorphism.  In this case, \(I\)
  is the largest ideal on which~\(\varphi\)
  restricts to an injective map into~\(\Comp(\Hilm)\).
  So \(I=I_{\Hilm}\).
  Thus \ref{katsurasalgebra_2}\(\iff\)\ref{katsurasalgebra_3}.

  If \(J\not\subseteq I_{\Hilm}\),
  then \(A\to\CP_{J,\Hilm}\)
  is not injective by Proposition~\ref{pro:Katsura_injective}.
  So~\ref{katsurasalgebra_1} implies \(J\subseteq I_{\Hilm}\).
  If \(J\subseteq I_{\Hilm}\)
  and~\ref{katsurasalgebra_1} holds, then the map
  \(A\to\CP_{I_{\Hilm},\Hilm}\)
  is still surjective because \(\CP_{I_{\Hilm},\Hilm}\)
  is a quotient of~\(\CP_{J,\Hilm}\),
  and it is also injective by Proposition~\ref{pro:Katsura_injective}.
  Hence \(\CP_{I_{\Hilm},\Hilm} = \CP_{J,\Hilm}\).
  This implies \(\Comp(\Hilm^+ I_{\Hilm}) = \Comp(\Hilm^+ J)\)
  and hence \(I_{\Hilm} = J\)
  because of the direct summand~\(A\) in~\(\Hilm^+\).
\end{proof}

\begin{prop}
  \label{pro:Hilbi_CP}
  Let~\(\Hilm[G]\)
  be a Hilbert \(B\)\nb-bimodule
  and let~\(I_{\Hilm[G]}\)
  be Katsura's ideal for~\(\Hilm[G]\).
  Then \(\CP_{I_{\Hilm[G]},\Hilm[G]} \cong B\rtimes \Hilm[G]\)
  \(\TT\)\nb-equivariantly.
\end{prop}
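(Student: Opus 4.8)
The plan is to recognise the claim as the special case of Theorem~\ref{the:CP_Hilbi_crossed} in which the defining triple already lies among the Hilbert-bimodule triples, so that the two-step decomposition of the relative Cuntz--Pimsner algebra returns the pair we started from. Indeed, Theorem~\ref{the:CP_Hilbi_crossed} provides a \(\TT\)\nb-equivariant isomorphism
\[
\CP_{I_{\Hilm[G]},\Hilm[G]} \cong \CP^0_{I_{\Hilm[G]},\Hilm[G]} \rtimes \CP^1_{I_{\Hilm[G]},\Hilm[G]}.
\]
Hence it suffices to identify the pair \((\CP^0_{I_{\Hilm[G]},\Hilm[G]},\CP^1_{I_{\Hilm[G]},\Hilm[G]})\) with \((B,\Hilm[G])\) as a \(\Cst\)\nb-algebra together with a Hilbert bimodule over it, compatibly with the gauge gradings.

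First I would identify the two fibres. Since~\(\Hilm[G]\) is a Hilbert \(B\)\nb-bimodule, Lemma~\ref{characterization-hilbertbimodule} shows that the left action restricts to an isomorphism of \(I_{\Hilm[G]}=\BRAKET{\Hilm[G]}{\Hilm[G]}_B\) onto \(\Comp(\Hilm[G])\), and (as in the proof of Proposition~\ref{katsurasalgebra}) this ideal is Katsura's ideal. Thus condition~\ref{katsurasalgebra_3} of Proposition~\ref{katsurasalgebra} holds with \(J=I_{\Hilm[G]}\), and condition~\ref{katsurasalgebra_1} gives that \(\pi_{I_{\Hilm[G]}}\colon B\to\CP^0_{I_{\Hilm[G]},\Hilm[G]}\) is an isomorphism, which I use to identify \(\CP^0_{I_{\Hilm[G]},\Hilm[G]}\) with~\(B\). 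For the first fibre, the case \(n=1\) of Lemma~\ref{fibers} gives a correspondence isomorphism \(\Hilm[G]\otimes_B\CP^0_{I_{\Hilm[G]},\Hilm[G]}\cong\CP^1_{I_{\Hilm[G]},\Hilm[G]}\), \(\xi\otimes y\mapsto t_J(\xi)\cdot y\). Composing with the isomorphism \(\CP^0_{I_{\Hilm[G]},\Hilm[G]}\cong B\) from the previous step and the canonical isomorphism \(\Hilm[G]\otimes_B B\cong\Hilm[G]\), I obtain an isomorphism of correspondences \(\CP^1_{I_{\Hilm[G]},\Hilm[G]}\cong\Hilm[G]\) over~\(B\); that the left \(\CP^0\)\nb-action transports to the given left \(B\)\nb-action on~\(\Hilm[G]\) is immediate from the representation identity \(\pi_J(b)t_J(\xi)=t_J(\varphi(b)\xi)\). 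Because a correspondence carries at most one Hilbert-bimodule structure, whose left inner product is determined by the right one (Lemma~\ref{characterization-hilbertbimodule}), any correspondence isomorphism between Hilbert bimodules automatically intertwines the left inner products; hence \(\CP^1_{I_{\Hilm[G]},\Hilm[G]}\cong\Hilm[G]\) is an isomorphism of Hilbert \(B\)\nb-bimodules.

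Finally, I would feed these identifications into the crossed product. The crossed product for a Hilbert bimodule depends functorially on the pair consisting of a \(\Cst\)\nb-algebra together with a Hilbert bimodule over it: an isomorphism of such pairs induces a \(\TT\)\nb-equivariant isomorphism of the section \(\Cst\)\nb-algebras of the associated semi-saturated Fell bundles. Applying this to \((\CP^0_{I_{\Hilm[G]},\Hilm[G]},\CP^1_{I_{\Hilm[G]},\Hilm[G]})\cong(B,\Hilm[G])\) and combining with Theorem~\ref{the:CP_Hilbi_crossed} yields the desired isomorphism. The only real obstacle is bookkeeping in this last step: one must check that the separately constructed isomorphisms \(\CP^0\cong B\) and \(\CP^1\cong\Hilm[G]\) assemble into a single isomorphism of pairs, and that the gauge action on \(\CP_{I_{\Hilm[G]},\Hilm[G]}\) corresponds to the canonical \(\TT\)\nb-action on \(B\rtimes\Hilm[G]\) that fixes~\(B\) and scales~\(\Hilm[G]\). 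Both hold because every isomorphism in sight is built from the canonical representation \((\pi_J,t_J)\), which is homogeneous of degrees~\(0\) and~\(1\) for the gauge action; so no computation is needed beyond tracking these degrees.
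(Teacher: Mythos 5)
Your proposal is correct and follows essentially the same route as the paper's own proof: invoke Theorem~\ref{the:CP_Hilbi_crossed} for the crossed-product decomposition, Proposition~\ref{katsurasalgebra} for \(B\cong\CP^0_{I_{\Hilm[G]},\Hilm[G]}\), and Lemma~\ref{fibers} for \(\CP^1_{I_{\Hilm[G]},\Hilm[G]}\cong\Hilm[G]\) as Hilbert \(B\)\nb-bimodules. The extra care you take with the uniqueness of the left inner product and the degree bookkeeping is sound but only spells out what the paper leaves implicit.
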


\begin{proof}
  Theorem~\ref{the:CP_Hilbi_crossed} identifies
  \(\CP_{I_{\Hilm[G]},\Hilm[G]} \cong
  \CP^0_{I_{\Hilm[G]},\Hilm[G]}\rtimes
  \CP^1_{I_{\Hilm[G]},\Hilm[G]}\).
  Proposition~\ref{katsurasalgebra} gives
  \(B\cong \CP^0_{I_{\Hilm[G]},\Hilm[G]}\),
  and the isomorphism
  \(\CP^1_{I_{\Hilm[G]},\Hilm[G]}\cong \Hilm[G] \otimes_A
  \CP^0_{I_{\Hilm[G]},\Hilm[G]}\)
  from Lemma~\ref{fibers} implies that
  \(\Hilm[G] \cong \CP^1_{I_{\Hilm[G]},\Hilm[G]}\)
  as a Hilbert \(B\)\nb-bimodule.
\end{proof}

\subsection{Functoriality of relative Cuntz--Pimsner algebras}
\label{sec:CP_functorial}

Schweizer~\cite{Schweizer:Crossed_Cuntz-Pimsner} has defined
``covariant homomorphisms'' and ``covariant correspondences'' between
self-correspondences and has asserted that they induce
\Star{}homomorphisms and correspondences between the associated
Toeplitz and absolute Cuntz--Pimsner algebras.  For the proof of
functoriality for covariant correspondences he refers to a preprint
that never got published.  In fact, there are some technical pitfalls.
We correct his statement here, and also add a condition to treat
relative Cuntz--Pimsner algebras.

Throughout this subsection, let \(\Hilm\colon A\leadsto A\)
and \(\Hilm[G]\colon B\leadsto B\)
be correspondences and let \(J_A\subseteq \varphi^{-1}(\Comp(\Hilm))\)
and \(J_B\subseteq \varphi^{-1}(\Comp(\Hilm[G]))\) be ideals.

\begin{defn}
  \label{def:covariant_corr}
  A \emph{covariant correspondence} from \((A,\Hilm,J_A)\)
  to \((B, \Hilm[G],J_B)\)
  is a pair~\((\Hilm[F],V)\),
  where~\(\Hilm[F]\)
  is a correspondence \(A\leadsto B\)
  with \(J_A\cdot \Hilm[F] \subseteq \Hilm[F]\cdot J_B\)
  and~\(V\)
  is a correspondence isomorphism
  \(\Hilm\otimes_A\Hilm[F] \Rightarrow \Hilm[F]\otimes_B\Hilm[G]\).
  A covariant correspondence is \emph{proper} if~\(\Hilm[F]\) is proper.
\end{defn}

\begin{prop}
  \label{pro:CP_functorial}
  A proper covariant correspondence \((\Hilm[F],V)\)
  from \((A,\Hilm,J_A)\)
  to \((B, \Hilm[G],J_B)\)
  induces a proper \(\TT\)\nb-equivariant
  correspondence
  \(\CP_{\Hilm[F],V}\colon
  \CP_{J_A,\Hilm}\leadsto\CP_{J_B,\Hilm[G]}\).
\end{prop}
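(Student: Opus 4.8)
The plan is to take the underlying right Hilbert module to be $\Hilm[F]\otimes_B\CP_{J_B,\Hilm[G]}$, where $\CP_{J_B,\Hilm[G]}$ is regarded as a correspondence $B\leadsto\CP_{J_B,\Hilm[G]}$ via its canonical representation, and to manufacture the left action of $\CP_{J_A,\Hilm}$ from the universal property of Proposition~\ref{universalpropertycp}. Write $\CP_B\defeq\CP_{J_B,\Hilm[G]}$ with canonical representation $(\pi_{J_B},t_{J_B})$. Because $t_{J_B}(\eta)^*t_{J_B}(\zeta)=\pi_{J_B}(\braket{\eta}{\zeta}_B)$, the rule $\eta\otimes c\mapsto t_{J_B}(\eta)\,c$ defines an isometric right $\CP_B$-linear map $m\colon\Hilm[G]\otimes_B\CP_B\to\CP_B$ with range $\overline{t_{J_B}(\Hilm[G])\cdot\CP_B}$. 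On $\Hilm[F]\otimes_B\CP_B$ I then define a pair $(\pi,t)$ by
\[
\pi(a)\defeq\varphi_{\Hilm[F]}(a)\otimes 1,\qquad
t(\xi)\defeq(1_{\Hilm[F]}\otimes m)\,(V\otimes 1_{\CP_B})\,(T_\xi\otimes 1_{\CP_B}),
\]
with $T_\xi\colon\Hilm[F]\to\Hilm\otimes_A\Hilm[F]$ as in the Preliminaries and $V\colon\Hilm\otimes_A\Hilm[F]\Rightarrow\Hilm[F]\otimes_B\Hilm[G]$ the given isomorphism.

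First I check that $(\pi,t)$ is a representation of~$\Hilm$. Condition~(1) holds because $V$ is an $A,B$\nb-bimodule map, so it intertwines $\varphi_\Hilm(a)\otimes 1_{\Hilm[F]}$ with $\varphi_{\Hilm[F]}(a)\otimes 1_{\Hilm[G]}$; condition~(2) holds because $m^*m=1$ and $V^*V=1$, which collapse $t(\xi)^*t(\eta)$ to $(T_\xi^*T_\eta)\otimes 1=\pi(\braket{\xi}{\eta})$. The decisive point is covariance~\eqref{covariancecondition} on~$J_A$, and I expect this to be the main obstacle, since it is exactly here that the compatibility hypothesis $J_A\cdot\Hilm[F]\subseteq\Hilm[F]\cdot J_B$ must be used. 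Using $\pi^1(\ket{\xi}\bra{\eta})=t(\xi)t(\eta)^*$, the identity $T_\xi T_\eta^*=\ket{\xi}\bra{\eta}\otimes 1$, and the intertwining property of~$V$, I compute for $a\in J_A$ that $\pi^1(\varphi(a))=\varphi_{\Hilm[F]}(a)\otimes mm^*$, whereas $\pi(a)=\varphi_{\Hilm[F]}(a)\otimes 1$; thus covariance is equivalent to $\varphi_{\Hilm[F]}(a)\otimes(1-mm^*)=0$ for $a\in J_A$. To establish this, I take $\eta\in\Hilm[F]$, $c\in\CP_B$ and write $\varphi_{\Hilm[F]}(a)\eta$ as a limit of sums $\eta'b$ with $\eta'\in\Hilm[F]$, $b\in J_B$ (possible since $a\in J_A$ and $J_A\cdot\Hilm[F]\subseteq\Hilm[F]\cdot J_B$), so that $(\varphi_{\Hilm[F]}(a)\eta)\otimes c$ is a limit of sums $\eta'\otimes\pi_{J_B}(b)\,c$. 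The canonical representation in $\CP_B$ is covariant on~$J_B$, so Proposition~\ref{pro:covariant_through_subspaces} gives $\pi_{J_B}(b)\in t_{J_B}(\Hilm[G])\cdot\CP_B$, the range of~$m$; hence each $\eta'\otimes\pi_{J_B}(b)\,c$ lies in the range of the isometry $1_{\Hilm[F]}\otimes m$, that is, in the range of the projection $1\otimes mm^*$. This yields $\varphi_{\Hilm[F]}(a)\otimes mm^*=\varphi_{\Hilm[F]}(a)\otimes 1$, which is the required covariance.

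With covariance in hand, Proposition~\ref{universalpropertycp} converts $(\pi,t)$ into a \Star{}homomorphism $\rho\colon\CP_{J_A,\Hilm}\to\Bound(\Hilm[F]\otimes_B\CP_B)$, the desired left action, and I set $\CP_{\Hilm[F],V}\defeq\Hilm[F]\otimes_B\CP_B$ equipped with it. The left action is nondegenerate already on the subalgebra $\pi(A)=\varphi_{\Hilm[F]}(A)\otimes 1$, because $\Hilm[F]$ is a nondegenerate correspondence. For properness I use that $\Hilm[F]$ is proper: then $\varphi_{\Hilm[F]}(A)\subseteq\Comp(\Hilm[F])$, and since the left action of~$B$ on $\CP_B$ takes values in $\Comp(\CP_B)=\CP_B$, the remark after Lemma~\ref{lem:Ttens_compact} gives $\pi(A)\subseteq\Comp(\Hilm[F]\otimes_B\CP_B)$; as $\pi_{J_A}(A)\cdot\CP_{J_A,\Hilm}$ is dense and $\Comp$ is an ideal in~$\Bound$, it follows that $\rho(\CP_{J_A,\Hilm})=\overline{\pi(A)\,\rho(\CP_{J_A,\Hilm})}\subseteq\Comp(\Hilm[F]\otimes_B\CP_B)$. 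Finally I make the correspondence $\TT$\nb-equivariant by the action $W_z\defeq 1_{\Hilm[F]}\otimes\gamma_z$, with $\gamma$ the gauge action on~$\CP_B$; it is continuous and compatible with the right module structure because $\pi_{J_B}(B)$ is gauge-fixed. Since $\gamma_z(t_{J_B}(\eta))=z\,t_{J_B}(\eta)$, a short computation gives $W_z\pi(a)W_z^*=\pi(a)$ and $W_zt(\xi)W_z^*=z\,t(\xi)$, so that $W_z\rho(x)W_z^*=\rho(\gamma_z(x))$ for all $x\in\CP_{J_A,\Hilm}$; hence $\CP_{\Hilm[F],V}$ is a proper $\TT$\nb-equivariant correspondence $\CP_{J_A,\Hilm}\leadsto\CP_{J_B,\Hilm[G]}$.
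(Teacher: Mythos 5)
Your overall architecture is the paper's: the underlying module is $\Hilm[F]\otimes_B\CP_{J_B,\Hilm[G]}$, the map $t(\xi)$ is built from $(1\otimes m)\circ(V\otimes1)\circ(T_\xi\otimes1)$ (the paper's $V^!\circ T_\xi$), covariance on $J_A$ comes from $J_A\cdot\Hilm[F]\subseteq\Hilm[F]\cdot J_B$ together with covariance of the canonical representation on $J_B$ and unitarity of $V$, and $\TT$\nb-equivariance is handled the same way. However, there is a genuine gap in your treatment of adjointability. The multiplication map $m\colon\Hilm[G]\otimes_B\CP_B\to\CP_B$ is an isometry whose range $\overline{t_{J_B}(\Hilm[G])\cdot\CP_B}$ is a closed right ideal that in general is \emph{not} orthogonally complemented in $\CP_B$; hence $m$ is in general not adjointable, and the operators $m^*$ and $mm^*$ that you use throughout -- in collapsing $t(\xi)^*t(\eta)$, and above all in the identity $\pi^1(\varphi(a))=\varphi_{\Hilm[F]}(a)\otimes mm^*$ on which your covariance argument rests -- need not exist. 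This is exactly the subtlety the paper flags: $\mu_{\Hilm[G]}$ is ``usually not unitary,'' and consequently one must first prove that $t(\xi)$ is adjointable at all. Your proof never does this; you invoke properness of $\Hilm[F]$ only at the end, for properness of the resulting correspondence, whereas the paper needs it already to show $t(\xi)\in\Comp(\Hilm[F]_\CP)$, via $T_\xi\in\Comp(\Hilm[F]_\CP,\Hilm\otimes_A\Hilm[F]_\CP)$ and Lemma~\ref{lem:Comp_submodules} (composing a compact operator with the non-adjointable isometry $V^!$ stays compact, hence adjointable).

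The repair is close to what you wrote. First establish $t(\xi)\in\Comp(\Hilm[F]_\CP)$ as above. Then verify condition~(2) directly from $\braket{t(\xi)\eta_1}{t(\eta)\eta_2}=\braket{\xi\otimes\eta_1}{\eta\otimes\eta_2}$, using only that $V^!$ preserves inner products -- no adjoint of $m$ is needed. For covariance, replace the computation with $mm^*$ by Proposition~\ref{pro:covariant_through_subspaces}: its hypothesis $\pi(J_A)\cdot\Hilm[F]_\CP\subseteq\overline{t(\Hilm)\cdot\Hilm[F]_\CP}$ is precisely what your density argument (factoring $\varphi_{\Hilm[F]}(a)\eta$ through $\Hilm[F]\cdot J_B$ and using covariance on $J_B$, then using surjectivity of $V$ to identify $\overline{t(\Hilm)\Hilm[F]_\CP}$ with the closure of the range of $1\otimes m$) already establishes, and the $\Cst$\nb-identity trick in the proof of that proposition supplies the conclusion without ever forming the projection $mm^*$.
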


Schweizer~\cite{Schweizer:Crossed_Cuntz-Pimsner} claims this also for
non-proper correspondences, and he allows~\(V\)
to be a non-adjointable isometry.  In fact, a pair \((\Hilm[F],V)\)
where~\(V\)
is only a non-adjointable isometry induces a correspondence between
the Toeplitz \(\Cst\)\nb-algebras.  It is unclear, however, when this
correspondence descends to one between the absolute or relative
Cuntz--Pimsner algebras.  And we need~\(\Hilm[F]\)
to be proper.  Alternatively, we may require~\(\Hilm\)
instead of~\(\Hilm[F]\)
to be proper.  This situation is treated
in~\cite{Albandik-Meyer:Colimits}.

\begin{proof}
  We use the canonical \Star{}homomorphism
  \(\pi_{J_B}\colon B\to\CP_{J_B,\Hilm[G]}\)
  to view~\(\CP_{J_B,\Hilm[G]}\)
  as a proper correspondence \(B\leadsto \CP_{J_B,\Hilm[G]}\).
  Thus \(\Hilm[F]_\CP \defeq \Hilm[F] \otimes_B \CP_{J_B,\Hilm[G]}\)
  becomes a proper correspondence \(A\leadsto \CP_{J_B,\Hilm[G]}\),
  that is, a Hilbert \(\CP_{J_B,\Hilm[G]}\)-module
  with a representation \(\pi\colon A\to\Comp(\Hilm[F]_\CP)\).
  The \(\TT\)\nb-action
  on~\(\CP_{J_B,\Hilm[G]}\)
  induces a \(\TT\)\nb-action
  on \(\Hilm[F]_\CP\)
  because \(\pi_{J_B}(B) \subseteq\CP_{J_B,\Hilm[G]}^0\).
  We are going to define a map
  \(t\colon \Hilm\to\Comp(\Hilm[F]_\CP)\)
  such that \((\pi,t)\)
  is a representation of~\((A,\Hilm)\)
  on~\(\Hilm[F]_\CP\)
  that is covariant on~\(J_A\).
  Then Proposition~\ref{universalpropertycp} yields a representation
  \(\tilde\pi\colon \CP_{J_A,\Hilm}\to\Comp(\Hilm[F]_\CP)\).
  This is the desired correspondence
  \(\CP_{J_A,\Hilm}\leadsto\CP_{J_B,\Hilm[G]}\).

  There is an isometry
  \(\mu_{\Hilm[G]}\colon \Hilm[G] \otimes_B \CP_{J_B,\Hilm[G]}
  \Rightarrow \CP_{J_B,\Hilm[G]}\),
  \(\zeta\otimes y\mapsto t_\infty(\zeta)\cdot y\),
  of correspondences \(B\leadsto \CP_{J_B,\Hilm[G]}\).
  Usually, it is not unitary.  We define an isometry
  \[
  V^!\colon \Hilm \otimes_A \Hilm[F]_\CP
  = \Hilm \otimes_A \Hilm[F] \otimes_B \CP_{J_B,\Hilm[G]}
  \xRightarrow{V\otimes1} \Hilm[F] \otimes_B \Hilm[G] \otimes_B \CP_{J_B,\Hilm[G]}
  \xRightarrow{1\otimes\mu_{\Hilm[G]}} \Hilm[F] \otimes_B \CP_{J_B,\Hilm[G]}
  = \Hilm[F]_\CP.
  \]
  It yields a map~\(t\)
  from~\(\Hilm\)
  to the space of bounded operators on~\(\Hilm[F]_\CP\)
  by \(t(\xi)(\eta)\defeq V^!(\xi\otimes\eta)\).
  To show that \(t(\xi)\)
  is adjointable, we need that~\(\Hilm[F]_\CP\)
  is a proper correspondence \(A\leadsto \CP_{J_B,\Hilm[G]}\):
  then \(T_\xi\in\Comp(\Hilm[F]_\CP,\Hilm\otimes_A \Hilm[F]_\CP)\),
  and composition with~\(V^!\)
  maps this into \(\Comp(\Hilm[F]_\CP)\)
  by Lemma~\ref{lem:Comp_submodules}.  So even
  \(t(\xi)\in\Comp(\Hilm[F]_\CP)\) for all \(\xi\in\Hilm\).

  We claim that the pair~\((\pi,t)\)
  is a representation.  We have \(\pi(a) t(\xi) = t(\varphi(a)\xi)\)
  because~\(V^!\)
  is a left \(A\)\nb-module
  map.  And \(t(\xi_1)^* t(\xi_2) = \pi(\braket{\xi_1}{\xi_2})\)
  holds because
  \begin{multline*}
    \braket{t(\xi_1)\eta_1}{t(\xi_2)\eta_2}
    =  \braket{V^!(\xi_1\otimes\eta_1)}{V^!(\xi_2 \otimes\eta_2)}
    \\=  \braket{\xi_1\otimes\eta_1}{\xi_2 \otimes\eta_2}
    =  \braket{\eta_1}{\pi(\braket{\xi_1}{\xi_2})\eta_2}.
  \end{multline*}
  If \(J_A=0\),
  then we are done at this point, and we have not yet used that~\(V\)
  is unitary.  So the Toeplitz \(\Cst\)\nb-algebra
  of a correspondence remains functorial for proper covariant
  correspondences where~\(V\) is not unitary.

  It remains to prove that~\(\pi\)
  is covariant on~\(J_A\).
  By Proposition~\ref{pro:covariant_through_subspaces}, this is
  equivalent to
  \(\pi(J_A)(\Hilm[F]_\CP) \subseteq t(\Hilm)(\Hilm[F]_\CP)\).
  And
  \(J_B\cdot \CP_{J_B,\Hilm[G]} \subseteq t_{J_B}(\Hilm[G])\cdot
  \CP_{J_B,\Hilm[G]}\)
  holds because the canonical representation of~\((B,\Hilm[G])\)
  on \(\CP_{J_B,\Hilm[G]}\)
  is covariant on~\(J_B\).
  Since \(J_A\cdot\Hilm[F] \subseteq \Hilm[F]\cdot J_B\)
  by assumption,
  \[
  J_A\cdot\Hilm[F]_\CP
  \subseteq \Hilm[F]\otimes J_B\cdot \CP_{J_B,\Hilm[G]}
  \subseteq \Hilm[F]\otimes t_{J_B}(\Hilm[G])\cdot \CP_{J_B,\Hilm[G]}
  = (1\otimes \mu_{\Hilm[G]})(\Hilm[F]\otimes_B \Hilm[G] \otimes_B \CP_{J_B,\Hilm[G]}).
  \]
  Since~\(V\)
  is unitary, we may rewrite this further as
  \(V^!(\Hilm[E]\otimes_A \Hilm[F] \otimes_B \CP_{J_B,\Hilm[G]}) =
  t(\Hilm[E]) \cdot \Hilm[F]_\CP\).
  This finishes the proof that~\((\pi,t)\)
  is covariant on~\(J_A\).
  The operators \(t(\xi)\)
  for \(\xi\in\Hilm\)
  are homogeneous of degree~\(1\)
  for the \(\TT\)\nb-action.
  Thus~\(\tilde\pi\) is \(\TT\)\nb-equivariant.
\end{proof}

\begin{example}
  \label{exa:Schweizer_counterexample}
  Let \(A=B\)
  and \(J= J_A=J_B\neq\{0\}\)
  and let \(\Hilm\subseteq\Hilm[G]\)
  be an \(A\)\nb-invariant
  Hilbert submodule.  Then the identity correspondence \(\Hilm[F]=A\)
  with the inclusion map
  \(\Hilm\otimes_A \Hilm[F] \cong \Hilm \hookrightarrow \Hilm[G] \cong
  \Hilm[G] \otimes_B \Hilm[F]\)
  is a covariant correspondence in the notation of Schweizer.  There
  is indeed a canonical \Star{}homomorphism
  \(\Toep_{\Hilm} \to \Toep_{\Hilm[G]}\).
  But it need not descend to the relative Cuntz--Pimsner algebras
  because \(\varphi_{\Hilm[G]}(a)\in\Comp(\Hilm[G])\)
  for \(a\in J\)
  need not be the extension of \(\varphi_{\Hilm}(a)\in\Comp(\Hilm)\)
  given by Lemma~\ref{lem:Comp_submodules}.  So the Cuntz--Pimsner
  covariance conditions for \(\CP_{\Hilm,J}\)
  and~\(\CP_{\Hilm[G],J}\)
  may be incompatible.  We ask~\(V\)
  to be unitary to avoid this problem.
\end{example}

\begin{example}
  \label{exa:universalarrow}
  Turn~\(\CP^0_{J,\Hilm}\),
  into a proper \(\Cst\)\nb-correspondence
  \(A\leadsto \CP^0_{J,\Hilm}\)
  with the obvious left action of~\(A\).
  The proper correspondence
  \(\CP^0_{J,\Hilm}\colon A\leadsto \CP^0_{J,\Hilm}\)
  with the isomorphism from Lemma~\ref{fibers} is a proper covariant
  correspondence from \(\Hilm\colon A\leadsto A\)
  with the ideal~\(J\)
  to
  \(\CP^1_{J,\Hilm}\colon \CP^0_{J,\Hilm} \leadsto \CP^0_{J,\Hilm}\)
  with Katsura's ideal~\(I_{\CP^1_{J,\Hilm}}\).
  It remains to show that
  \(J\cdot \CP^0_{J,\Hilm} \subseteq \CP^0_{J,\Hilm}\cdot
  I_{\CP^1_{J,\Hilm}} = I_{\CP^1_{J,\Hilm}}\).
  Since~\(\CP^1_{J,\Hilm}\)
  is a Hilbert bimodule, Katsura's ideal is equal to the range ideal
  of the left inner product, that is, the closed linear span of
  \(x y^*\)
  for all \(x,y\in \CP^1_{J,\Hilm}\).
  This contains \(\Comp(\Hilm)\)
  for \(x,y\in\Hilm\),
  which in turn contains~\(J\)
  by the Cuntz--Pimsner covariance condition on~\(J\),
  see Proposition~\ref{universalpropertycp}.
  So \(J\cdot \CP^0_{J,\Hilm} \subseteq I_{\CP^1_{J,\Hilm}}\).
  The relative
  Cuntz--Pimsner algebra of
  \((\CP^0_{J,\Hilm},\CP^1_{J,\Hilm},I_{\CP^1_{J,\Hilm}})\)
  is again~\(\CP^0_{J,\Hilm}\)
  by Proposition~\ref{katsurasalgebra}.  The correspondence
  \(\CP^0_{J,\Hilm} \leadsto \CP^0_{J,\Hilm}\)
  associated to the covariant correspondence above is just the
  identity correspondence on~\(\CP^0_{J,\Hilm}\).
\end{example}

\begin{rem}
  \label{rem:covariance_simplifies_Bmax}
  If \(J_A=0\)
  or \(J_B=\varphi^{-1}(\Comp(\Hilm[G]))\),
  then the condition
  \(J_A\cdot \Hilm[F] \subseteq \Hilm[F] \cdot J_B\)
  for covariant transformations \((A,\Hilm,J_A) \to (B,\Hilm[G],J_B)\)
  always holds and so may be left out.  This is clear if \(J_A=0\).
  Let \(J_B = \varphi^{-1}(\Comp(\Hilm[G]))\).
  Since~\(\Hilm[F]\)
  is proper, \(J_A\)
  acts on
  \(\Hilm \otimes_A \Hilm[F] \cong \Hilm[F] \otimes_B \Hilm[G]\)
  by compact operators by Lemma~\ref{lem:Ttens_compact}.  Again by
  Lemma~\ref{lem:Ttens_compact}, this implies
  \(J_A \subseteq \Comp(\Hilm[F]\cdot J_B)\).
  Thus \(J_A\cdot \Hilm[F] \subseteq \Hilm[F] \cdot J_B\).
\end{rem}

\begin{example}
  Covariant correspondences are related to the \emph{\(T\)\nb-pairs}
  used by Katsura~\cite{Katsura:Ideal_structure_correspondences} to
  describe the ideal structure of relative Cuntz--Pimsner algebras.
  For this, we specialise to covariant correspondences out of
  \((A,\Hilm,J)\)
  where the underlying correspondence comes from a quotient map
  \(A\to A/I\).
  That is, \(\Hilm[F]= A/I\colon A\leadsto A/I\)
  for an ideal \(I\idealin A\).
  When is this part of a covariant correspondence from \((A,\Hilm,J)\)
  to \((A/I,\Hilm',J')\) for some \(\Hilm',J'\)?

  There are natural isomorphisms
  \(\Hilm\otimes_A \Hilm[F] \cong \Hilm/\Hilm I\)
  and \(\Hilm[F]\otimes_{A/I} \Hilm' \cong \Hilm'\)
  as correspondences \(A\leadsto A/I\).
  So the only possible choice for~\(\Hilm'\)
  is \(\Hilm' \defeq \Hilm/\Hilm I\)
  with a left \(A/I\)\nb-action
  which gives the canonical \(A\)\nb-action
  when composed with the quotient map \(A\to A/I\).
  Such a correspondence \(\Hilm/\Hilm I\colon A/I \leadsto A/I\)
  exists if and only if~\(\Hilm\)
  is \emph{positively invariant}, that is,
  \(I \Hilm \subseteq \Hilm I\).
  Assume this to be the case.

  An ideal \(J'\idealin A/I\)
  is equivalent to an ideal \(I'\idealin A\)
  that contains~\(I\).
  For a covariant correspondence, we require
  \(J \Hilm[F] \subseteq \Hilm[F] J'\),
  which means that \(J\subseteq I'\).
  And in order for \((A/I,\Hilm',J')\)
  to be an object of~\(\Corr^\NN_\proper\),
  the ideal~\(J'\)
  or, equivalently, \(I'\),
  should act by compact operators on \(\Hilm' \defeq \Hilm/\Hilm I\).

  Then there is an isomorphism
  \(\Hilm \otimes_A \Hilm[F] \cong \Hilm[F]\otimes_A \Hilm'\).
  It is unique up to an automorphism of~\(\Hilm/\Hilm I\),
  that is, a unitary operator on~\(\Hilm/\Hilm I\)
  that also commutes with the left action of \(A\)
  or~\(A/I\),
  but this shall not concern us.  So we get a covariant correspondence
  in this case.  This induces a correspondence from~\(\CP_{J, \Hilm}\)
  to~\(\CP_{J', \Hilm'}\)
  by Proposition~\ref{pro:CP_functorial}.  Actually, our covariant
  correspondence is a covariant homomorphism, and so the correspondence
  from Proposition~\ref{pro:CP_functorial} comes from a
  \(\TT\)\nb-equivariant
  \Star{}homomorphism, which turns out to be surjective.  So a pair of
  ideals~\((I,I')\)
  as above induces a \(\TT\)\nb-equivariant
  quotient or, equivalently, a \(\TT\)\nb-invariant
  ideal in~\(\CP_{J, \Hilm}\).

  Sometimes different pairs \((I,I')\)
  produce the same quotient of~\(\CP_{J,\Hilm}\).
  If~\(I'/I\)
  contains elements that act by~\(0\)
  on~\(\Comp(\Hilm/\Hilm I)\),
  then the map \(A/I \to \CP_{J',\Hilm'}\)
  is not injective by Proposition~\ref{pro:Katsura_injective}.  Then
  we may enlarge~\(I\)
  without changing the relative Cuntz--Pimsner algebra.  When we add
  the condition that no non-zero element of~\(I'/I\)
  acts by a compact operator on~\(\Hilm/\Hilm\cdot I\),
  then we get exactly the \emph{\(T\)-pairs}
  with \(J\subseteq I'\)
  of~\cite{Katsura:Ideal_structure_correspondences}.  The
  \(T\)\nb-pairs
  \((I, I')\)
  with \(J\subseteq I'\)
  correspond bijectively to gauge-invariant ideals
  of~\(\CP_{J, \Hilm}\)
  by
  \cite{Katsura:Ideal_structure_correspondences}*{Proposition~11.9}.
\end{example}

\section{Bicategories of correspondences and Hilbert bimodules}
\label{mainsection}

We are going to enrich the relative Cuntz--Pimsner algebra
construction to a homomorphism (functor) from a suitable bicategory of
covariant correspondences to the \(\TT\)\nb-equivariant
correspondence bicategory.  Most of the work is already done in
Proposition~\ref{pro:CP_functorial}, which describes how this
homomorphism acts on arrows.  It remains to define the appropriate
bicategories and write down the remaining data of a homomorphism.

The correspondence bicategory of \(\Cst\)\nb-algebras
and related bicategories have been discussed in
\cites{Buss-Meyer-Zhu:Non-Hausdorff_symmetries,
  Buss-Meyer-Zhu:Higher_twisted, Buss-Meyer:Actions_groupoids,
  Albandik-Meyer:Colimits}.  We recall basic bicategorical definitions
in the appendix for the convenience of the reader.  Here we go through
these notions much more quickly.  Let~\(\Corr\)
be the correspondence bicategory.  It has \(\Cst\)\nb-algebras
as objects, \(\Cst\)\nb-correspondences
as arrows, and correspondence isomorphisms as \(2\)\nb-arrows.
The composition is the tensor product~\(\otimes_B\)
of \(\Cst\)\nb-correspondences.

Given any bicategory~\(\mathfrak{D}\),
there is a bicategory \(\Corr^{\mathfrak{D}}\)
with homomorphisms \(\mathfrak{D}\to\Corr\)
as objects, transformations between these homomorphisms as arrows, and
modifications between these transformations as \(2\)\nb-arrows
(see the appendix for these notions).  There is also a continuous
version of this for a locally compact, topological
bicategory~\(\mathfrak{D}\).
In particular, we shall use the \(\TT\)\nb-equivariant
correspondence bicategory~\(\Corr^\TT\).
Its objects are \(\Cst\)\nb-algebras
with a continuous \(\TT\)\nb-action.
Its arrows are \(\TT\)\nb-equivariant
\(\Cst\)\nb-correspondences,
and \(2\)\nb-arrows
are \(\TT\)\nb-equivariant
isomorphisms of \(\Cst\)\nb-correspondences.

When~\(\mathfrak{D}\)
is the monoid~\((\NN,+)\),
we may simplify the bicategory \(\Corr^{\mathfrak{D}}\),
see \cite{Albandik-Meyer:Colimits}*{Section~5}.  An object in it is
equivalent to a \(\Cst\)\nb-algebra~\(A\)
with a self-correspondence \(\Hilm\colon A\leadsto A\).
An arrow is equivalent to a covariant correspondence (without the
condition \(J_A \Hilm[F]\subseteq \Hilm[F] J_B\)),
and a \(2\)\nb-arrow
is equivalent to an isomorphism between two covariant correspondences.
The bicategory~\(\Corr^\NN_\proper\)
that we need is a variant of~\(\Corr^\NN\)
where we add the ideal~\(J\)
and allow only proper covariant correspondences as arrows.

\begin{defn}
  \label{bicategory}
  The bicategory \(\Corr^\NN_\proper\)
  has the following data (see Definition~\ref{def:bicategory}):
  \begin{itemize}
  \item Objects are triples \((A, \Hilm,J)\),
    where \(A\)
    is a \(\Cst\)\nb-algebra,
    \(\Hilm\colon A\leadsto A\)
    is a \(\Cst\)\nb-correspondence,
    and \(J\subseteq\varphi^{-1}(\Comp(\Hilm))\) is an ideal.
  \item Arrows \((A, \Hilm, J)\to (A_1, \Hilm_1,J_1)\)
    are proper covariant correspondences \((\Hilm[F],u)\)
    from \((A,\Hilm,J)\)
    to \((A_1, \Hilm_1,J_1)\),
    that is, \(\Hilm[F]\)
    is a proper correspondence \(A\leadsto A_1\)
    with \(J\Hilm[F]\subseteq\Hilm[F] J_1\)
    and~\(u\)
    is a correspondence isomorphism
    \(\Hilm\otimes_A \Hilm[F] \Rightarrow \Hilm[F] \otimes_{A_1}
    \Hilm_1\).

  \item \(2\)\nb-Arrows
    \((\Hilm[F]_0,u_0) \Rightarrow (\Hilm[F]_1,u_1)\)
    are isomorphisms of covariant correspondences, that is,
    correspondence isomorphisms
    \(w\colon\Hilm[F]_0\Rightarrow \Hilm[F]_1\)
    for which the following diagram commutes:
    \[
    \xymatrix{
      \Hilm\otimes_A\Hilm[F]_0\ar@{=>}[r]^{u_0} \ar@{=>}[d]_{1_{\Hilm}\otimes w}&
      \Hilm[F]_0\otimes_{A_1}\Hilm_1\ar@{=>}[d]^{w\otimes 1_{\Hilm_1}} \\
      \Hilm\otimes_A\Hilm[F]_1 \ar@{=>}[r]^{u_1} &
      \Hilm[F]_1\otimes_{A_1}\Hilm_1
    }
    \]

  \item The vertical product of \(2\)\nb-arrows
    \[
    w_0\colon(\Hilm[F]_0,u_0)\Rightarrow(\Hilm[F]_1,u_1),\qquad
    w_1\colon(\Hilm[F]_1,u_1)\Rightarrow(\Hilm[F]_2,u_2)
    \]
    is the usual product
    \(w_1\cdot w_0\colon\Hilm[F]_0\to\Hilm[F]_2\).
    This is indeed a \(2\)\nb-arrow from
    \((\Hilm[F]_0,u_0)\) to \((\Hilm[F]_2,u_2)\).
    And the vertical product is associative and unital.  Thus the
    arrows \((A, \Hilm, J)\to (A_1, \Hilm_1,J_1)\)
    and the \(2\)\nb-arrows
    between them form a category
    \(\Corr^\NN_\proper((A, \Hilm, J), (A_1, \Hilm_1,J_1))\).

  \item Let \((\Hilm[F],u)\colon(A, \Hilm, J)\to(A_1, \Hilm_1, J_1)\)
    and
    \((\Hilm[F]_1,u_1)\colon(A_1, \Hilm_1, J_1)\to(A_2, \Hilm_2,
    J_2)\)
    be arrows.  Their product is
    \((\Hilm[F]_1,u_1)\circ(\Hilm[F],u)\defeq
    (\Hilm[F]\otimes_{A_1}\Hilm[F]_1, u\bullet u_1)\),
    where~\(u\bullet u_1\) is the composite correspondence isomorphism
    \[
    \Hilm\otimes_A\Hilm[F]\otimes_{A_1}\Hilm[F]_1
    \xrightarrow{u\otimes 1_{\Hilm[F]_1}}
    \Hilm[F]\otimes_{A_1}\Hilm_1\otimes_{A_1}\Hilm[F]_1
    \xrightarrow{1_{\Hilm[F]}\otimes u_1}
    \Hilm[F]\otimes_{A_1}\Hilm[F]_1\otimes_{A_2}\Hilm_2.
    \]

  \item The horizontal product for a diagram of arrows and
    \(2\)\nb-arrows
    \[
    \xymatrix{(A, \Hilm, J)\ar@/^1pc/[rr]^{(\Hilm[F],u)}="a"
      \ar@/^-1pc/[rr]_{(\widetilde{\Hilm[F]},\widetilde{u})}="b"&
      &(A_1, \Hilm_1, J_1) \ar@/^1pc/[rr]^{(\Hilm[F]_1,u_1)}="c"
      \ar@/^-1pc/[rr]_{(\widetilde{\Hilm[F]_1},\widetilde{u_1})}="d"
      & &(A_2, \Hilm_2, J_2) \ar@{=>}^{w}"a";"b" \ar@{=>}^{w_1}"c";"d"
    }
    \]
    is the \(2\)\nb-arrow
    \[
    \xymatrix{
      (A, \Hilm, J) \ar@/^1pc/[rrrr]^{(\Hilm[F]\otimes_{A_1}\Hilm[F]_1,u\bullet u_1)}="a"
      \ar@/^-1pc/[rrrr]_{(\widetilde{\Hilm[F]}\otimes_{A_1}\widetilde{\Hilm[F]_1},
        \widetilde{u}\bullet\widetilde{u_1})}="b" & & &&
      ( A_2, \Hilm_2, J_2).
      \ar@{=>}^{w\otimes w_1 }"a";"b"}
    \]
    This horizontal product and the product of arrows combine to
    composition bifunctors
    \begin{multline*}
      \Corr^\NN_\proper((A, \Hilm, J), (A_1, \Hilm_1,J_1))
      \times \Corr^\NN_\proper((A_1, \Hilm_1, J_1), (A_2, \Hilm_2,J_2))
      \\\to \Corr^\NN_\proper((A, \Hilm, J), (A_2, \Hilm_2,J_2)).
    \end{multline*}

  \item The unit arrow on the object~\((A,\Hilm,J)\)
    is the proper covariant correspondence \((A,\iota_{\Hilm})\),
    where~\(A\)
    is the identity correspondence, that is, \(A\)
    with the obvious \(A\)\nb-bimodule
    structure and the inner product \(\braket{x}{y}\defeq x^*y\),
    and~\(\iota_{\Hilm}\) is the canonical isomorphism
    \[
    \Hilm\otimes_A A \cong  \Hilm \cong  A\otimes_A \Hilm
    \]
    built from the right and left actions of~\(A\) on~\(\Hilm\).

  \item The associators and unitors are the same as in the
    correspondence bicategory.  Thus they inherit the coherence
    conditions needed for a bicategory.
  \end{itemize}
\end{defn}

\begin{thm}
  \label{the:CP_functor}
  There is a homomorphism \(\Corr^\NN_\proper\to\Corr^\TT\)
  that maps each object \((A,\Hilm,J)\)
  to its relative Cuntz--Pimsner algebra and is the construction of
  Proposition~\textup{\ref{pro:CP_functorial}} on arrows.
\end{thm}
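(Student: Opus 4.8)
The plan is to provide the missing data of a homomorphism of bicategories (in the sense recalled in the appendix) and to check the coherence axioms, reducing almost everything to the structure of the correspondence bicategory~\(\Corr^\TT\). On objects, I send \((A,\Hilm,J)\) to \(\CP_{J,\Hilm}\) equipped with its gauge action, viewed as an object of~\(\Corr^\TT\) by Theorem~\ref{the:CP_Hilbi_crossed}. On arrows, I use the assignment \((\Hilm[F],u)\mapsto \CP_{\Hilm[F],u}\) of Proposition~\ref{pro:CP_functorial}, which already produces a proper \(\TT\)\nb-equivariant correspondence \(\CP_{J,\Hilm}\leadsto\CP_{J_1,\Hilm_1}\). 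For \(2\)\nb-arrows, recall that \(\CP_{\Hilm[F],u}\) has underlying Hilbert module \(\Hilm[F]\otimes_{A_1}\CP_{J_1,\Hilm_1}\). Given a \(2\)\nb-arrow \(w\colon(\Hilm[F]_0,u_0)\Rightarrow(\Hilm[F]_1,u_1)\), I send it to \(w\otimes 1_{\CP_{J_1,\Hilm_1}}\). This is a \(\TT\)\nb-equivariant unitary of Hilbert modules, and it intertwines the left \(\CP_{J,\Hilm}\)\nb-actions because it intertwines the generating operators \(\pi(a)\) and \(t(\xi)\); for \(t(\xi)\) this uses exactly the commuting square relating \(u_0\) and \(u_1\) that defines a \(2\)\nb-arrow. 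That this respects vertical composition and units is then immediate, so one obtains functors on the hom\nb-categories.

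Next I define the compositors and unitors. For a composable pair \((\Hilm[F],u)\colon(A,\Hilm,J)\to(A_1,\Hilm_1,J_1)\) and \((\Hilm[F]_1,u_1)\colon(A_1,\Hilm_1,J_1)\to(A_2,\Hilm_2,J_2)\), I produce the comparison
\[
\CP_{\Hilm[F],u}\otimes_{\CP_{J_1,\Hilm_1}}\CP_{\Hilm[F]_1,u_1}\;\cong\;\CP_{\Hilm[F]\otimes_{A_1}\Hilm[F]_1,\,u\bullet u_1}
\]
by composing the associator of the tensor product with the left unitor \(\CP_{J_1,\Hilm_1}\otimes_{\CP_{J_1,\Hilm_1}}X\cong X\), which applies since \(\CP_{\Hilm[F]_1,u_1}\) is a genuine correspondence; concretely this identifies
\[
(\Hilm[F]\otimes_{A_1}\CP_{J_1,\Hilm_1})\otimes_{\CP_{J_1,\Hilm_1}}(\Hilm[F]_1\otimes_{A_2}\CP_{J_2,\Hilm_2})\;\cong\;(\Hilm[F]\otimes_{A_1}\Hilm[F]_1)\otimes_{A_2}\CP_{J_2,\Hilm_2}.
\]
The unitor at \((A,\Hilm,J)\) identifies \(\CP_{A,\iota_{\Hilm}}=A\otimes_A\CP_{J,\Hilm}\) with the identity correspondence on~\(\CP_{J,\Hilm}\): the representation attached to the unit arrow by Proposition~\ref{universalpropertycp} induces \(\tilde\pi=\id_{\CP_{J,\Hilm}}\), as in Example~\ref{exa:universalarrow}.

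The heart of the argument, and the step I expect to be the main obstacle, is to verify that the compositor is actually a \(2\)\nb-arrow in~\(\Corr^\TT\), that is, that the module\nb-theoretic isomorphism above intertwines the two left \(\CP_{J,\Hilm}\)\nb-actions defined through the universal property. Since each such action is generated by \(\pi(a)\) for \(a\in A\) and \(t(\xi)\) for \(\xi\in\Hilm\), it suffices to compare these generators on both sides. The action of \(\pi(a)\) is transported through the left \(A\)\nb-module structures, which are natural. The action of \(t(\xi)\) is implemented by the isometries \(V^!\) built from \(u\), \(u_1\) and the maps \(\mu_{\Hilm[G]}\); unwinding the definition of \(u\bullet u_1\) as the stated composite, one checks that \(t(\xi)\) for the composite arrow corresponds to \(t(\xi)\otimes 1\) under the isomorphism. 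This is the one genuine computation.

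Finally, naturality of the compositors in both arrow variables follows from the same generators argument together with functoriality of \(w\otimes 1\), and the associativity and unit coherence diagrams reduce to the coherence already available in~\(\Corr^\TT\), because our compositors and unitors are assembled entirely from its associators and unitors. This completes the construction and verification of the homomorphism.
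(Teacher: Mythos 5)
Your proposal is correct and follows essentially the same route as the paper's proof: both define the compositor via the canonical identification of \((\Hilm[F]\otimes_{A_1}\CP_{J_1,\Hilm_1})\otimes_{\CP_{J_1,\Hilm_1}}(\Hilm[F]_1\otimes_{A_2}\CP_{J_2,\Hilm_2})\) with \((\Hilm[F]\otimes_{A_1}\Hilm[F]_1)\otimes_{A_2}\CP_{J_2,\Hilm_2}\) using associators and unitors, observe that this isomorphism intertwines the representations of \(\Hilm\), and let the coherence conditions follow from those of the correspondence bicategory. You spell out somewhat more explicitly than the paper the generators argument for why the compositor and the images of \(2\)\nb-arrows intertwine the induced left \(\CP_{J,\Hilm}\)\nb-actions, which is a welcome elaboration rather than a deviation.
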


\begin{proof}
  The construction in Proposition~\ref{pro:CP_functorial} is
  ``natural'' and thus functorial for isomorphisms of covariant
  correspondences, and it maps the identity covariant correspondence
  to the identity \(\TT\)\nb-equivariant
  correspondence on the relative Cuntz--Pimsner algebras.  Let
  \((\Hilm[F],u)\colon(A, \Hilm, J)\to(A_1, \Hilm_1, J_1)\)
  and
  \((\Hilm[F]_1,u_1)\colon(A_1, \Hilm_1, J_1)\to(A_2, \Hilm_2, J_2)\)
  be covariant correspondences and let \(\CP_{\Hilm[F],u}\)
  and \(\CP_{\Hilm[F]_1,u_1}\)
  be the associated correspondences of relative Cuntz--Pimsner
  algebras.  By definition,
  \(\CP_{\Hilm[F],u}\otimes_{\CP_{J_1,\Hilm_1}} \CP_{\Hilm[F]_1,u_1}\)
  and \(\CP_{\Hilm[F] \otimes_{A_1} \Hilm[F]_1,u\bullet u_1}\)
  are equal to
  \((\Hilm[F]\otimes_{A_1} \CP_{J_1,\Hilm[F]_1})
  \otimes_{\CP_{J_1,\Hilm[F]_1}} (\Hilm[F]_1 \otimes_{A_2}
  \CP_{J_2,\Hilm[F]_2})\)
  and
  \((\Hilm[F]\otimes_{A_1} \Hilm[F]_1) \otimes_{A_2}
  \CP_{J_2,\Hilm[F]_2}\)
  as \(\TT\)\nb-equivariant
  correspondences \(A \leadsto \CP_{J_2,\Hilm[F]_2}\).
  Associators and unit transformations give a canonical
  \(\TT\)\nb-equivariant
  isomorphism between these correspondences.  This isomorphism also
  intertwines the representations of~\(\Hilm\).
  Hence it is an isomorphism of correspondences
  \(\CP_{J,\Hilm[F]} \leadsto \CP_{J_2,\Hilm[F]_2}\).
  These canonical isomorphisms satisfy the coherence conditions for a
  homomorphism of bicategories in
  Definition~\ref{homomorphismbicategory}.
\end{proof}

The relative Cuntz--Pimsner algebra \(\CP_{J,\Hilm}\)
is the crossed product \(\CP^0_{J,\Hilm} \rtimes \CP^1_{J,\Hilm}\)
by Theorem~\ref{the:CP_Hilbi_crossed}.  So~\(\CP_{J,\Hilm}\)
with the gauge \(\TT\)\nb-action
and the Hilbert \(\CP^0_{J,\Hilm}\)-bimodule
\(\CP^1_{J,\Hilm}\)
contain the same amount of information.  We now study the construction
that sends \((A,\Hilm,J)\)
to the Hilbert \(\CP^0_{J,\Hilm}\)-bimodule
\(\CP^1_{J,\Hilm}\).
The appropriate bicategory of Hilbert bimodules is a sub-bicategory
of~\(\Corr^\NN_\proper\):

\begin{defn}
  \label{def:Corr_Bim}
  Let \(\Corr^\NN_\Bim \subseteq \Corr^\NN_\proper\)
  be the full sub-bicategory whose objects are triples
  \((B, \Hilm[G], I_{\Hilm[G]})\),
  where~\(\Hilm[G]\)
  is a Hilbert \(B\)\nb-bimodule
  and~\(I_{\Hilm[G]}\)
  is Katsura's ideal for~\(\Hilm[G]\),
  which is also equal to the range
  ideal~\(\BRAKET{\Hilm[G]}{\Hilm[G]}\)
  of the left inner product on~\(\Hilm[G]\).
  The arrows and \(2\)\nb-arrows
  among objects of~\(\Corr^\NN_\Bim\)
  are the same as in~\(\Corr^\NN_\proper\),
  including the condition
  \(I_{\Hilm}\Hilm[F]\subseteq \Hilm[F] I_{\Hilm[G]}\)
  for covariant correspondences.
\end{defn}

When we restrict the relative Cuntz--Pimsner algebra construction
\(\Corr^\NN_\proper \to \Corr^\TT\)
to~\(\Corr^\NN_\Bim\),
we get the (partial) crossed product construction for Hilbert bimodules
by Proposition~\ref{pro:Hilbi_CP}.  Thus Theorem~\ref{the:CP_functor}
also completes the crossed product for Hilbert bimodules to a functor
\(\Corr^\NN_\Bim \to \Corr^\TT\).

The map that sends \((A,\Hilm,J)\)
to \((\CP^0_{J,\Hilm},\CP^1_{J,\Hilm},I_{\CP^1_{J,\Hilm}})\)
is part of a functor \(\Corr^\NN_\proper \to \Corr^\NN_\Bim\)
which, when composed with the crossed product functor
\(\Corr^\NN_\Bim \to \Corr^\TT\),
gives the relative Cuntz--Pimsner algebra functor of
Theorem~\ref{the:CP_functor}.  We do not prove this now because it
follows from our main result.  The key step is the following universal
property of \((\CP^0_{J,\Hilm},\CP^1_{J,\Hilm},I_{\CP^1_{J,\Hilm}})\):

\begin{prop}
  \label{cptransformation}
  Let \((A,\Hilm,J)\)
  and \((B,\Hilm[G],I_{\Hilm[G]})\)
  be objects of \(\Corr^\NN_\proper\)
  and \(\Corr^\NN_\Bim\), respectively.  Let
  \[
  \upsilon_{(A,\Hilm,J)}\colon (A,\Hilm,J) \to (\CP^0_{J,\Hilm},\CP^1_{J,\Hilm},I_{\CP^1_{J,\Hilm}})
  \]
  be the covariant correspondence from
  Example~\textup{\ref{exa:universalarrow}}.  Composition
  with~\(\upsilon_{(A,\Hilm,J)}\) induces a groupoid equivalence
  \[
  \Corr^\NN_\proper\bigl((A,\Hilm,J), (B,\Hilm[G],I_{\Hilm[G]})\bigr)
  \simeq
  \Corr^\NN_\Bim\bigl((\CP^0_{J,\Hilm},\CP^1_{J,\Hilm},I_{\CP^1_{J,\Hilm}}), (B,\Hilm[G],I_{\Hilm[G]})\bigr).
  \]
\end{prop}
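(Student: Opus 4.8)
The plan is to prove that the functor
\begin{multline*}
\Phi\defeq(-)\circ\upsilon_{(A,\Hilm,J)}\colon
\Corr^\NN_\Bim\bigl((\CP^0_{J,\Hilm},\CP^1_{J,\Hilm},I_{\CP^1_{J,\Hilm}}),(B,\Hilm[G],I_{\Hilm[G]})\bigr)\\
\to
\Corr^\NN_\proper\bigl((A,\Hilm,J),(B,\Hilm[G],I_{\Hilm[G]})\bigr)
\end{multline*}
given by precomposition with~\(\upsilon_{(A,\Hilm,J)}\)
is an equivalence of categories.  Since all \(2\)\nb-arrows are correspondence isomorphisms, both hom-categories are groupoids, so it suffices to show that~\(\Phi\)
is essentially surjective and fully faithful.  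First I would make~\(\Phi\)
explicit: using \(\CP^0_{J,\Hilm}\otimes_{\CP^0_{J,\Hilm}}\Hilm[H]\cong\Hilm[H]\)
and the isomorphism \(\Hilm\otimes_A\CP^0_{J,\Hilm}\cong\CP^1_{J,\Hilm}\)
of Lemma~\ref{fibers}, the image \(\Phi(\Hilm[H],w)\)
is the module~\(\Hilm[H]\)
viewed as a correspondence \(A\leadsto B\)
through \(\pi_J\colon A\to\CP^0_{J,\Hilm}\),
with covariance \(V_{(\Hilm[H],w)}(\xi\otimes h)=w(t_J(\xi)\otimes h)\).

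For full faithfulness, the key observation is that
\(\CP^0_{J,\Hilm}=\pi_J(A)+I_{\CP^1_{J,\Hilm}}\):
by Lemma~\ref{fibers} the algebra is spanned by~\(\pi_J(A)\)
and the degree-zero monomials, and a monomial of positive length
\(t_J(\xi_1)c\,t_J(\eta_1)^*\) equals the range-ideal element
\(\BRAKET{t_J(\xi_1)c}{t_J(\eta_1)}\in I_{\CP^1_{J,\Hilm}}\).
A \(2\)\nb-arrow \((\Hilm[H]_0,w_0)\Rightarrow(\Hilm[H]_1,w_1)\)
is a \(\CP^0_{J,\Hilm}\)\nb-linear isomorphism~\(s\)
intertwining \(w_0,w_1\);
its image~\(\Phi(s)=s\)
is \(A\)\nb-linear and intertwines the induced covariances, so~\(\Phi\)
is faithful.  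For fullness I would show that any \(A\)\nb-linear~\(s\)
intertwining the covariances is automatically \(\CP^0_{J,\Hilm}\)\nb-linear.
It commutes with~\(\pi_J(A)\)
by hypothesis, and for \(c=t_J(\xi)t_J(\eta)^*\in I_{\CP^1_{J,\Hilm}}\)
one checks, from \(w_i\) unitary, that the left action of~\(c\)
on \(\Hilm[H]_i\otimes_B\Hilm[G]\)
equals \(L^i_\xi(L^i_\eta)^*\),
where \(L^i_\zeta(h)\defeq w_i(t_J(\zeta)\otimes h)=V_i(\zeta\otimes h)\);
the intertwining identity \((s\otimes1)L^0_\zeta=L^1_\zeta s\)
then yields \((s\otimes1)(c\otimes1)=(c\otimes1)(s\otimes1)\).
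Finally, the covariance condition
\(I_{\CP^1_{J,\Hilm}}\Hilm[H]_i\subseteq\Hilm[H]_i I_{\Hilm[G]}\)
together with the left inner product of~\(\Hilm[G]\)
lets one recover \(c\cdot h\in\Hilm[H]_i I_{\Hilm[G]}\)
from the family \(\{(c\cdot h)\otimes g\}_{g\in\Hilm[G]}\)
(contract with~\(g^*\) and use an approximate unit of \(I_{\Hilm[G]}=\BRAKET{\Hilm[G]}{\Hilm[G]}\)),
whence \(s(c\cdot h)=c\cdot s(h)\).

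For essential surjectivity I would start from a proper covariant correspondence
\((\Hilm[F],V)\colon(A,\Hilm,J)\to(B,\Hilm[G],I_{\Hilm[G]})\)
and reconstruct an arrow out of
\((\CP^0_{J,\Hilm},\CP^1_{J,\Hilm},I_{\CP^1_{J,\Hilm}})\).
By Proposition~\ref{pro:CP_functorial} the pair \((\Hilm[F],V)\)
induces a \(\TT\)\nb-equivariant representation
\(\tilde\pi\colon\CP_{J,\Hilm}\to\Comp(\Hilm[M])\)
on \(\Hilm[M]\defeq\Hilm[F]\otimes_B\CP_{I_{\Hilm[G]},\Hilm[G]}\).
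Since the target object lies in~\(\Corr^\NN_\Bim\),
Propositions~\ref{katsurasalgebra} and~\ref{pro:Hilbi_CP} give
\(\CP_{I_{\Hilm[G]},\Hilm[G]}\cong B\rtimes\Hilm[G]\)
with gauge-fixed-point algebra~\(B\),
so the degree-zero spectral subspace~\(\Hilm[M]_0\)
is exactly~\(\Hilm[F]\)
as a Hilbert \(B\)\nb-module.
Restricting~\(\tilde\pi\)
to \(\CP^0_{J,\Hilm}\)
preserves this summand and turns~\(\Hilm[F]\)
into a correspondence \(\Hilm[H]\colon\CP^0_{J,\Hilm}\leadsto B\)
whose restriction along~\(\pi_J\)
is the original \(A\)\nb-action;
the degree-raising operators \(\tilde\pi(t_J(\xi))\)
supply the unitary~\(w\)
with \(w(t_J(\xi)\otimes h)=V(\xi\otimes h)\),
so that \(\Phi(\Hilm[H],w)\cong(\Hilm[F],V)\),
the covariance condition being verified as in Example~\ref{exa:universalarrow}.

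The main obstacle is the properness of the reconstructed arrow:
Proposition~\ref{pro:CP_functorial} only gives compactness of
\(\tilde\pi(\CP^0_{J,\Hilm})\)
over \(\CP_{I_{\Hilm[G]},\Hilm[G]}=B\rtimes\Hilm[G]\),
whereas I need \(\tilde\pi(\CP^0_{J,\Hilm})|_{\Hilm[F]}\subseteq\Comp_B(\Hilm[F])\).
I expect to resolve this by a degree-zero compression argument using the positive-degree saturation
\(\Hilm[M]_p=\overline{\Hilm[M]_0\cdot\CP^p_{I_{\Hilm[G]},\Hilm[G]}}\):
for a homogeneous generalised rank-one operator \(\ket{m}\bra{m'}\)
the compression \(P_0\ket{m}\bra{m'}P_0\),
restricted to \(\Hilm[M]_0=\Hilm[F]\),
rewrites as a sum of generalised rank-one operators with both entries in~\(\Hilm[F]\)
and inner products in~\(B\)
(moving the degree factors across via \(d\braket{m'}{n}=\braket{m'd^*}{n}\) and Lemma~\ref{lem:Comp_submodules}).
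Hence the gauge-invariant operators~\(\tilde\pi(c)\)
restrict to \(B\)\nb-compact operators on~\(\Hilm[F]\).
Once properness is secured, the two constructions are mutually inverse up to natural isomorphism, and their naturality in isomorphisms of covariant correspondences promotes the resulting bijections to the asserted groupoid equivalence.
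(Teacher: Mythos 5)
Your strategy is essentially the paper's: for essential surjectivity you reconstruct exactly the paper's arrow \((\Hilm[F]^\#,u^\#)\) by passing to the correspondence of Proposition~\ref{pro:CP_functorial} and restricting to the degree-zero spectral subspace, and your degree-zero compression of \(\ket{m}\bra{m'}\) correctly fills in the detail behind the paper's assertion that the restricted left action of \(\CP^0_{J,\Hilm}\) lands in \(\Comp_B(\Hilm[F])\). You diverge on the other half: the paper proves that \((\Hilm[F],u)\mapsto(\Hilm[F]^\flat,u^\flat)\) is injective on arrows (a representation of \(\CP_{J,\Hilm}\) on \(B\rtimes\Hilm[G]\) is determined by the underlying representation of \((A,\Hilm)\)) and so obtains an isomorphism of hom-categories, whereas you argue full faithfulness of precomposition directly on \(2\)\nb-arrows. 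That alternative is legitimate in principle, but it is where your write-up has a genuine gap.

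The gap is in fullness. You reduce \(\CP^0_{J,\Hilm}\)-linearity of an \(A\)\nb-linear intertwiner \(s\) to commutation with \(\pi_J(A)\) and with the elements \(c=t_J(\xi)t_J(\eta)^*\), \(\xi,\eta\in\Hilm\). These elements only span \(\pi^1_J(\Comp(\Hilm))\), and the closed \Star{}subalgebra generated by \(\pi_J(A)\cup\pi^1_J(\Comp(\Hilm))\) is just \(\pi_J(A)+\pi^1_J(\Comp(\Hilm))\), which is in general a proper subalgebra of \(\CP^0_{J,\Hilm}=\pi_J(A)+I_{\CP^1_{J,\Hilm}}\): products of length\nb-\(\le1\) monomials are again of length \(\le1\), and a length-two monomial such as \(t_J(\xi_1)t_J(\xi_2)t_J(\eta_2)^*t_J(\eta_1)^*\) is not reached (for \(\Hilm=\CC^2\) over \(\CC\), the projection \(s_1s_2s_2^*s_1^*\in\CP^0\) is not in \(\CC+\operatorname{span}(s_is_j^*)\)). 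A general generator of \(I_{\CP^1_{J,\Hilm}}\) is \(xy^*\) with \(x=t_J(\xi)c'\), \(y=t_J(\eta)\) and \(c'\in\CP^0_{J,\Hilm}\) arbitrary, so your argument as stated does not close. It can be repaired by induction on monomial length: the set \(\mathcal{S}\) of \(c\) with \(s(ch)=cs(h)\) is a closed \Star{}subalgebra containing \(\pi_J(A)\), and if \(c'\in\mathcal{S}\) then your computation, run with \(L^i_{t_J(\xi)c'}(h)\defeq w_i(t_J(\xi)c'\otimes h)=L^i_{t_J(\xi)}(c'h)\) in place of \(L^i_\xi\), places \(t_J(\xi)c't_J(\eta)^*\) in \(\mathcal{S}\); the contraction step again needs \(I_{\CP^1_{J,\Hilm}}\Hilm[H]_i\subseteq\Hilm[H]_iI_{\Hilm[G]}\). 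A smaller lacuna: the covariance condition \(I_{\CP^1_{J,\Hilm}}\cdot\Hilm[F]^\#\subseteq\Hilm[F]^\#\cdot I_{\Hilm[G]}\) of the reconstructed arrow is not ``as in Example~\ref{exa:universalarrow}'' (which verifies the unrelated inclusion \(J\cdot\CP^0_{J,\Hilm}\subseteq I_{\CP^1_{J,\Hilm}}\)); it requires the computation \(I_{\CP^1_{J,\Hilm}}\cdot\Hilm[F]\subseteq t(\Hilm)\cdot(\Hilm[F]\otimes_B\Hilm[G]^*)\cong\Hilm[F]\otimes_B\Hilm[G]\otimes_B\Hilm[G]^*=\Hilm[F]\cdot I_{\Hilm[G]}\), which uses that \(V\) is unitary.
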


Recall that
\(\Corr^\NN_\proper\left((A,\Hilm, J), (A_1, \Hilm_1, J_1)\right)\)
for objects \((A,\Hilm, J)\)
and \((A_1, \Hilm_1, J_1)\)
of \(\Corr^\NN_\proper\)
denotes the groupoid with arrows
\((A, \Hilm, J)\to (A_1, \Hilm_1, J_1)\)
as objects and \(2\)\nb-arrows among them as arrows.

\begin{proof}
  We begin with an auxiliary construction.
  Proposition~\ref{pro:Hilbi_CP} identifies
  \(\CP_{I_{\Hilm[G]},\Hilm[G]} \cong B\rtimes\Hilm[G]\)
  as \(\ZZ\)\nb-graded
  \(\Cst\)\nb-algebras.
  In particular, \(\CP^0_{I_{\Hilm[G]},\Hilm[G]} \cong B\)
  and \(\CP^1_{I_{\Hilm[G]},\Hilm[G]} \cong \Hilm[G]\),
  \(\CP^{-1}_{I_{\Hilm[G]},\Hilm[G]} \cong \Hilm[G]^*\)
  as Hilbert \(B\)\nb-bimodules.
  Let \((\Hilm[F],u)\)
  be a proper covariant correspondence
  \((A,\Hilm,J)\to (B,\Hilm[G],I_{\Hilm[G]})\).
  It induces a proper, \(\TT\)\nb-equivariant
  correspondence \(\CP_{\Hilm[F],V} = \bigoplus_{n\in\ZZ} \CP_{\Hilm[F],V}^n\)
  from~\(\CP_{J,\Hilm}\)
  to~\(\CP_{I_{\Hilm[G]},\Hilm[G]}\)
  by Proposition~\ref{pro:CP_functorial}.  By construction,
  \(\CP_{\Hilm[F],V}^n = \Hilm[F] \otimes_B
  \CP^n_{I_{\Hilm[G]},\Hilm[G]}\).
  Thus
  \(\CP_{\Hilm[F],V}^0 = \Hilm[F] \otimes_B \CP^0_{I_{\Hilm[G]},\Hilm[G]}
  \cong \Hilm[F] \otimes_B B \cong \Hilm[F]\)
  and
  \(\CP_{\Hilm[F],V}^1 = \Hilm[F] \otimes_B \CP^1_{I_{\Hilm[G]},\Hilm[G]}
  \cong \Hilm[F] \otimes_B \Hilm[G]\).
  The left action on~\(\CP_{\Hilm[F],V}\)
  is a nondegenerate, \(\TT\)\nb-equivariant
  \Star{}homomorphism \(\CP_{J,\Hilm} \to \Comp(\CP_{\Hilm[F],V})\).
  So~\(\CP^0_{J,\Hilm}\)
  acts on~\(\CP_{\Hilm[F],V}\)
  by grading-preserving operators.  Restricting to the degree-\(0\)
  part, we get a nondegenerate \Star{}homomorphism
  \(\CP^0_{J,\Hilm} \to \Comp(\CP_{\Hilm[F],V}^0) \cong \Comp(\Hilm[F])\).
  Let~\(\Hilm[F]^\#\)
  be~\(\Hilm[F]\)
  viewed as a correspondence \(\CP^0_{J,\Hilm} \leadsto B\)
  in this way.

  We now construct an isomorphism of correspondences
  \[
  u^\#\colon \CP^1_{J,\Hilm} \otimes_{\CP^0_{J,\Hilm}} \Hilm[F]^\#
  \Rightarrow \Hilm[F]^\# \otimes_B \Hilm[G].
  \]
  We need two descriptions of~\(u^\#\).
  The first shows that it is unitary, the second that it intertwines
  the left actions of \(\CP^0_{J,\Hilm}\).
  The first formula for~\(u^\#\)
  uses Lemma~\ref{fibers}, which gives unitary Hilbert \(B\)\nb-module
  maps
  \[
  \CP^1_{J,\Hilm} \otimes_{\CP^0_{J,\Hilm}} \Hilm[F]^\#
  \cong \Hilm \otimes_A \CP^0_{J,\Hilm} \otimes_{\CP^0_{J,\Hilm}} \Hilm[F]^\#
  \cong \Hilm \otimes_A \Hilm[F].
  \]
  Composing with
  \(u\colon \Hilm \otimes_A \Hilm[F]\Rightarrow \Hilm[F] \otimes_B
  \Hilm[G]\)
  gives the desired unitary~\(u^\#\).
  The second formula for~\(u^\#\)
  restricts the left action of~\(\CP_{J,\Hilm}\)
  on~\(\CP_{\Hilm[F],V}\) to a multiplication map
  \begin{equation}
    \label{eq:other_multiplication_map}
    \CP^1_{J,\Hilm} \otimes_{\CP^0_{J,\Hilm}} \Hilm[F]^\#
    = \CP^1_{J,\Hilm} \otimes_{\CP^0_{J,\Hilm}} \CP_{\Hilm[F],V}^0
    \to \CP_{\Hilm[F],V}^1
    \cong \Hilm[F]^\#\otimes_B \Hilm[G].
  \end{equation}
  This is manifestly \(\CP^0_{J,\Hilm}\)\nb-linear
  because the isomorphism
  \(\Hilm[F]^\# \otimes_B \CP_{I_{\Hilm[G]},\Hilm[G]}^n \cong
  \CP_{\Hilm[F],V}^n\)
  is by right multiplication and so intertwines the left actions
  of~\(\CP_{J,\Hilm}^0\).
  The map in~\eqref{eq:other_multiplication_map} maps
  \(t_J(\xi)\otimes \eta\mapsto u(\xi\otimes\eta)\)
  for all \(\xi\in\Hilm\),
  \(\eta\in\Hilm[F]\).
  This determines it by Lemma~\ref{fibers}. So both constructions give
  the same map~\(u^\#\).

  We claim that
  \(I_{\CP^1_{J,\Hilm}} \cdot \Hilm[F]^\# \subseteq \Hilm[F]^\#\cdot
  I_{\Hilm[G]}\)
  holds, so that the pair \((\Hilm[F]^\#,u^\#)\)
  is a proper covariant correspondence from
  \((\CP^0_{J,\Hilm},\CP^1_{J,\Hilm},I_{\CP^1_{J,\Hilm}})\)
  to \((B,\Hilm[G],I_{\Hilm[G]})\).
  The ideal~\(I_{\CP^1_{J,\Hilm}}\)
  is equal to the range of the left inner product
  on~\(\CP^1_{J,\Hilm}\).
  Using the Fell bundle structure, we may rewrite this as
  \(\CP^1_{J,\Hilm}\cdot \CP^{-1}_{J,\Hilm}\).  Thus
  \[
  I_{\CP^1_{J,\Hilm}} \cdot \CP_{\Hilm[F],V}^0
  = \CP^1_{J,\Hilm}\cdot \CP^{-1}_{J,\Hilm} \cdot \CP_{\Hilm[F],V}^0
  \subseteq \CP^1_{J,\Hilm}\cdot \CP_{\Hilm[F],V}^{-1}
  = \Hilm\cdot \CP^0_{J,\Hilm}\cdot \CP_{\Hilm[F],V}^{-1}
  = \Hilm\cdot \CP_{\Hilm[F],V}^{-1}.
  \]
  The product \(\Hilm\cdot\CP_{\Hilm[F],V}^{-1}\)
  uses the representation of~\(\Hilm\)
  on~\(\CP_{\Hilm[F],V}\)
  built in the proof of Proposition~\ref{pro:CP_functorial}.  So
  \(\Hilm\cdot \CP_{\Hilm[F],V}^{-1}\) is the image of the map
  \[
  \Hilm \otimes_A \Hilm[F] \otimes_B \Hilm[G]^*
  \cong \Hilm[F] \otimes_B \Hilm[G] \otimes_B \Hilm[G]^*
  = \Hilm[F] \cdot I_{\Hilm[G]}.
  \]
  So
  \(I_{\CP^1_{J,\Hilm}} \cdot \CP_{\Hilm[F],V}^0 \subseteq \Hilm[F] \cdot
  I_{\Hilm[G]}\)
  as claimed.  We have turned a proper covariant correspondence
  \((\Hilm[F],u)\)
  from \((A,\Hilm,J)\)
  to \((B,\Hilm[G],I_{\Hilm[G]})\)
  into a proper covariant correspondence \((\Hilm[F]^\#,u^\#)\)
  from \((\CP^0_{J,\Hilm},\CP^1_{J,\Hilm},I_{\CP^1_{J,\Hilm}})\)
  to \((B,\Hilm[G],I_{\Hilm[G]})\).

  Conversely, take a proper covariant correspondence \((\Hilm[F],u)\)
  from \((\CP^0_{J,\Hilm},\CP^1_{J,\Hilm},I_{\CP^1_{J,\Hilm}})\)
  to \((B,\Hilm[G],I_{\Hilm[G]})\).
  Composing it with~\(\upsilon_{(A,\Hilm,J)}\)
  gives a proper covariant correspondence from \((A,\Hilm,J)\)
  to \((B,\Hilm[G],I_{\Hilm[G]})\).
  We now simplify this product of covariant correspondences.  The
  underlying correspondence \(A\to \CP^0_{J,\Hilm}\)
  in~\(\upsilon_{(A,\Hilm,J)}\)
  is \(\CP^0_{J,\Hilm}\),
  and the isomorphism
  \(\Hilm \otimes_A \CP^0_{J,\Hilm} \cong \CP^0_{J,\Hilm}
  \otimes_{\CP^0_{J,\Hilm}} \CP^1_{J,\Hilm} = \CP^1_{J,\Hilm}\)
  is the one from Lemma~\ref{fibers}.  We identify the tensor product
  \(\CP^0_{J,\Hilm} \otimes_{\CP^0_{J,\Hilm}} \Hilm[F]\)
  with~\(\Hilm[F]\)
  by the canonical map.  Thus the product of \((\Hilm[F],u)\)
  with~\(\upsilon_{(A,\Hilm,J)}\)
  is canonically isomorphic to a covariant correspondence
  \((\Hilm[F]^\flat,u^\flat)\)
  with underlying correspondence
  \(\Hilm[F]^\flat = \Hilm[F]\colon A\leadsto B\)
  with the left \(A\)\nb-action
  through \(\pi_J\colon A\to \CP^0_{J,\Hilm}\).
  The isomorphism
  \(u^\flat \colon \Hilm \otimes_A \Hilm[F]^\flat \Rightarrow
  \Hilm[F]^\flat \otimes_B \Hilm[G]\)
  is the composite of the given isomorphism
  \(u\colon \CP^1_{J,\Hilm} \otimes_{\CP^u_{J,\Hilm}} \Hilm[F]
  \Rightarrow \Hilm[F] \otimes_B \Hilm[G]\)
  with the isomorphism
  \(\Hilm \otimes_A \CP^0_{J,\Hilm} \cong \CP^1_{J,\Hilm}\)
  from Lemma~\ref{fibers}.

  Now let \((\Hilm[F],u)\)
  be a proper covariant correspondence from \((A,\Hilm,J)\)
  to \((B,\Hilm[G],I_{\Hilm[G]})\).  We claim that
  \begin{equation}
    \label{sharp-flat}
    (\Hilm[F]^{\#\flat},u^{\#\flat}) = (\Hilm[F],u).
  \end{equation}
  By construction, the underlying Hilbert \(B\)\nb-module
  of~\(\Hilm[F]^{\#\flat}\)
  is~\(\Hilm[F]\).
  We even have \(\Hilm[F]^{\#\flat}=\Hilm[F]\)
  as correspondences \(A\leadsto B\),
  that is, the left \(\CP^0_{J,\Hilm}\)\nb-action
  on~\(\Hilm[F]^\#\)
  composed with \(\pi_J\colon A\to\CP^0_{J,\Hilm}\)
  is the original action of~\(A\).
  The isomorphism
  \(\Hilm \otimes_A \CP^0_{J,\Hilm} \cong \CP^1_{J,\Hilm}\)
  is used both to get~\(u^\#\)
  from~\(u\)
  and to get~\(u^{\#\flat}\)
  from~\(u^\#\).  Unravelling this shows that \(u^{\#\flat} = u\).

  Now we claim that the map that sends a proper covariant
  correspondence
  \((\Hilm[F],u)\colon
  (\CP^0_{J,\Hilm},\CP^1_{J,\Hilm},I_{\CP^1_{J,\Hilm}}) \to
  (B,\Hilm[G],I_{\Hilm[G]})\)
  to \((\Hilm[F]^\flat,u^\flat)\)
  is injective.  This claim and~\eqref{sharp-flat} imply
  \((\Hilm[F]^{\flat\#},u^{\flat\#}) = (\Hilm[F],u)\),
  that is, our two operations are inverse to each other.  To prove
  injectivity, we use Proposition~\ref{pro:CP_functorial} to build a
  correspondence
  \(\CP_{\Hilm[F],u}\colon \CP_{J,\Hilm}\leadsto
  \CP_{I_{\Hilm[G]},\Hilm[G]}\)
  from~\((\Hilm[F],u)\).
  This correspondence determines~\((\Hilm[F],u)\):
  we can get back~\(\Hilm[F]\)
  as its degree-\(0\)
  part because \(\CP_{I_{\Hilm[G]},\Hilm[G]} = B\rtimes \Hilm[G]\),
  and because \(u\)
  and the left \(\CP^0_{J,\Hilm}\)\nb-module
  structure on~\(\Hilm[F]\)
  are both contained in the left \(\CP_{J,\Hilm}\)\nb-module
  structure on~\(\CP_{\Hilm[F],u}\).
  An \(\CP_{J,\Hilm}\)\nb-module
  structure on \(\CP_{I_{\Hilm[G]},\Hilm[G]}\)
  is already determined by a representation of~\((A,\Hilm)\).
  Since
  \(\CP_{I_{\Hilm[G]},\Hilm[G]}^n = \CP_{I_{\Hilm[G]},\Hilm[G]}^0\cdot
  \CP_{I_{\Hilm[G]},\Hilm[G]}^n\),
  this representation is determined by its restriction to
  \(\CP_{I_{\Hilm[G]},\Hilm[G]}^0 \cong B\).
  And~\((\Hilm[F]^\flat,u^\flat)\)
  determines the representation of~\((A,\Hilm)\)
  on~\(B\).
  Thus~\((\Hilm[F]^\flat,u^\flat)\) determines~\((\Hilm[F],u)\).

  The constructions of \((\Hilm[F]^\#,u^\#)\)
  and~\((\Hilm[F]^\flat,u^\flat)\)
  are clearly natural for isomorphisms of covariant
  correspondences.  So they form an isomorphism of categories
  \[
  \Corr^\NN_\proper\bigl((A,\Hilm,J), (B,\Hilm[G],I_{\Hilm[G]})\bigr)
  \cong
  \Corr^\NN_\Bim\bigl((\CP^0_{J,\Hilm},\CP^1_{J,\Hilm},I_{\CP^1_{J,\Hilm}}), (B,\Hilm[G])\bigr).
  \]
  One piece in this isomorphism is naturally equivalent to the functor
  that composes with~\(\upsilon_{(A,\Hilm,J)}\).
  Hence this functor is an equivalence of categories, as asserted.
\end{proof}

\section{The reflector from correspondences to Hilbert bimodules}
\label{mainsection1}

We now strengthen Proposition~\ref{cptransformation} using some
general results on adjunctions of homomorphisms between bicategories.
We first recall the related and better known results about ordinary
categories and functors.

Let \(\mathcal{C}\)
and~\(\mathcal{B}\)
be categories.  Let \(R\colon \mathcal{C}\to\mathcal{B}\)
be a functor and \(b\in \obj \mathcal{B}\).
An object \(c\in\obj \mathcal{C}\)
with an arrow \(\upsilon\colon b\to R(c)\)
is called a \emph{universal arrow} from~\(b\)
to~\(R\)
if, for each \(x\in \obj \mathcal{C}\)
and each \(f\in\mathcal{B}(b, R(x))\),
there is a unique \(g\in\mathcal{C}(c,x)\)
with \(R(g)\circ \upsilon = f\).  Equivalently, the maps
\begin{equation}
  \label{adjunction_category}
  \mathcal{C}(c,x) \to \mathcal{B}(b,R(x)),\qquad
  f\mapsto R(f)\circ \upsilon,
\end{equation}
are bijective for all \(x\in \obj \mathcal{C}\).
The functor~\(R\)
has a left adjoint \(L\colon \mathcal{B}\to\mathcal{C}\)
if and only if such universal arrows exist for all
\(x\in\obj \mathcal{C}\).
The left adjoint functor \(L\colon \mathcal{B}\to\mathcal{C}\)
is uniquely determined up to natural isomorphism.  It maps
\(b\mapsto c\)
on objects, and the isomorphisms~\eqref{adjunction_category} become
natural in both \(b\)
and~\(x\)
when we replace~\(c\)
by~\(L(b)\).
An adjunction between \(L\)
and~\(R\)
may also be expressed through its unit and counit, that is, natural
transformations \(L\circ R \Rightarrow \id_\mathcal{C}\)
and \(\id_\mathcal{B}\Rightarrow R\circ L\)
such that the induced transformations
\(L\Rightarrow L\circ R\circ L \Rightarrow L\)
and \(R\Rightarrow R\circ L\circ R \Rightarrow R\)
are unit transformations.

A subcategory \(\mathcal{C}\subseteq\mathcal{B}\)
is called \emph{reflective} if the inclusion functor
\(R\colon \mathcal{C}\to\mathcal{B}\)
has a left adjoint \(L\colon \mathcal{B}\to\mathcal{C}\).
The functor~\(L\)
is called \emph{reflector}.  The case we care about is a bicategorical
version of a full subcategory.  If \(\mathcal{C}\subseteq\mathcal{B}\)
is a full subcategory, then we may choose \(L\circ R\)
to be the identity functor on~\(\mathcal{C}\)
and the counit \(L\circ R\Rightarrow \id_\mathcal{C}\)
to be the unit natural transformation.

Fiore~\cite{Fiore:Pseudo_biadjoints} carries the story of adjoint
functors over to homomorphisms between \(2\)\nb-categories
(which he calls ``pseudo functors''), that is, bicategories where the
associators and unitors are identity \(2\)\nb-arrows.
The bicategories we need are not \(2\)\nb-categories.
But any bicategory is equivalent to a \(2\)\nb-category
by MacLane's Coherence Theorem.  Hence Fiore's definitions and results
apply in bicategories as well.
We shorten notation by speaking of ``universal'' arrows and
``adjunctions'' instead of ``biuniversal'' arrows and
``biadjunctions.''  A \(2\)\nb-category
is also a category with some extra structure.  So leaving out the
prefix~``bi'' may cause confusion in that setting.  But it will always
be clear whether we mean the categorical or bicategorical notions.

\begin{defn}[\cite{Fiore:Pseudo_biadjoints}*{Definition 9.4}]
  \label{def:universalarrow}
  Let \(\mathcal{B}\)
  and~\(\mathcal{C}\)
  be bicategories, \(R\colon\mathcal{C}\to\mathcal{B}\)
  a homomorphism, and \(b\in\obj\mathcal{B}\).
  Let \(c\in\obj\mathcal{C}\)
  and let \(g\colon b\to R(c)\)
  be an arrow in~\(\mathcal{B}\).
  The pair \((c, g)\)
  is a \emph{universal arrow} from~\(b\)
  to~\(R\)
  if, for every \(x\in\obj\mathcal{C}\),
  the following functor is an equivalence of categories:
  \[
  g^*\colon \mathcal{C}(c,x)\to\mathcal{B}(b,R(x)),\qquad
  f\mapsto R(f)\cdot g,\quad w\mapsto R(w)\bullet 1_g.
  \]
\end{defn}

Universal arrows are called \emph{left biliftings} by
Street~\cite{Street:Fibrations_bicategories}.

We can now reformulate Proposition~\ref{cptransformation}:

\begin{prop}
  \label{pro:Corr_universal_arrow}
  Let \((A,\Hilm,J) \in \obj \Corr^\NN_\proper\).
  The covariant correspondence \(\upsilon_{(A,\Hilm,J)}\)
  from \((A,\Hilm,J)\)
  to \((\CP^0_{J,\Hilm},\CP^1_{J,\Hilm},I_{\Hilm})\)
  is a universal arrow from \((A,\Hilm,J)\)
  to the inclusion homomorphism
  \(\Corr^\NN_\Bim \to \Corr^\NN_\proper\).\qed
\end{prop}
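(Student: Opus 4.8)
The plan is to read this statement off directly from Proposition~\ref{cptransformation} after matching the two formulations. First I would instantiate Definition~\ref{def:universalarrow} with $\mathcal{B}=\Corr^\NN_\proper$, $\mathcal{C}=\Corr^\NN_\Bim$, and $R$ the inclusion homomorphism, so that $b=(A,\Hilm,J)$, $c=(\CP^0_{J,\Hilm},\CP^1_{J,\Hilm},I_{\CP^1_{J,\Hilm}})$ (the third entry being Katsura's ideal of the bimodule~$\CP^1_{J,\Hilm}$, as in Example~\ref{exa:universalarrow}), and $g=\upsilon_{(A,\Hilm,J)}$. What has to be shown is then that for every object~$x$ of~$\Corr^\NN_\Bim$ the functor~$g^*$ of Definition~\ref{def:universalarrow} is an equivalence of categories.

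The key simplification is that $\Corr^\NN_\Bim$ is a \emph{full} sub-bicategory of~$\Corr^\NN_\proper$ by Definition~\ref{def:Corr_Bim}, so the inclusion~$R$ is the identity on arrows and $2$\nb-arrows: $R(f)=f$ and $R(w)=w$. Hence for $x=(B,\Hilm[G],I_{\Hilm[G]})$ the functor
\[
g^*\colon \Corr^\NN_\Bim(c,x)\to\Corr^\NN_\proper(b,x),\qquad
f\mapsto R(f)\cdot g=f\cdot\upsilon_{(A,\Hilm,J)},
\]
is precisely composition with~$\upsilon_{(A,\Hilm,J)}$, where the product is the horizontal composition of~$\Corr^\NN_\proper$ from Definition~\ref{bicategory}. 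Proposition~\ref{cptransformation} asserts that exactly this functor is a groupoid equivalence. Since the $2$\nb-arrows in both bicategories are correspondence isomorphisms, the hom-categories are groupoids, so a groupoid equivalence is the same as an equivalence of categories. This gives the required property of a universal arrow.

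The only point needing care — and hence the one mild obstacle — is to confirm that the functor appearing in Definition~\ref{def:universalarrow} really coincides with the composition functor whose equivalence is established in Proposition~\ref{cptransformation}, rather than with its inverse. This amounts to checking that the operation $(\Hilm[F],u)\mapsto(\Hilm[F]^\flat,u^\flat)$ built in the proof of Proposition~\ref{cptransformation} is naturally isomorphic to composition with~$\upsilon_{(A,\Hilm,J)}$; this is recorded in the closing lines of that proof, where~$\flat$ is identified as the functor that composes with~$\upsilon_{(A,\Hilm,J)}$ and~$\#$ is exhibited as its essential inverse. With that identification in hand, no further computation is needed and the proposition follows.
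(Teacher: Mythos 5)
Your proposal is correct and matches the paper's own treatment: the paper states this proposition with an immediate \(\qed\), explicitly presenting it as a reformulation of Proposition~\ref{cptransformation} via Definition~\ref{def:universalarrow}, which is exactly the unwinding you carry out. Your extra care in checking that \(g^*\) is the composition functor (and not its essential inverse \(\#\)) is the right point to verify and is indeed settled by the closing lines of the proof of Proposition~\ref{cptransformation}.
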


There are two alternative definitions of adjunctions, based on
equivalences between morphism categories or on units and counits.
These are spelled out, respectively, by Fiore in
\cite{Fiore:Pseudo_biadjoints}*{Definition 9.8} and by Gurski in
\cite{Gurski:Biequivalences_tricategories}*{Definition 2.1}.  We shall
use Fiore's definition.

\begin{defn}[\cite{Fiore:Pseudo_biadjoints}*{Definition 9.8}]
  \label{def:adjunction}
  Let \(\mathcal{B}\)
  and~\(\mathcal{C}\)
  be bicategories.  An \emph{adjunction} between them consists of
  \begin{itemize}
  \item two homomorphisms \(L\colon \mathcal{B}\to\mathcal{C}\),
    \(R\colon \mathcal{C}\to\mathcal{B}\);
  \item equivalences of categories
    \[
    \varphi_{b,c}\colon \mathcal{C}(L(b),c) \simeq \mathcal{B}(b,R(c))
    \]
    for all \(b\in\obj \mathcal{B}\), \(c\in\obj \mathcal{C}\);
  \item natural equivalences of functors
    \[
    \xymatrix{
      \mathcal{C}(L(b_1),c_1) \ar[r]^{f^*} \ar[d]_{\varphi_{b_1,c_1}} &
      \mathcal{C}(L(b_2),c_1) \ar[r]^{g_*}  &
      \mathcal{C}(L(b_2),c_2) \ar[d]^{\varphi_{b_2,c_2}} \ar@{=>}[dll] \\
      \mathcal{B}(b_1,R(c_1)) \ar[r]_{f^*}&
      \mathcal{B}(b_2,R(c_1)) \ar[r]_{g_*}&
      \mathcal{B}(b_2,R(c_2))
    }
    \]
    for all arrows \(f\colon b_2 \to b_1\),
    \(g\colon c_1 \to c_2\) in \(\mathcal{B}\) and~\(\mathcal{C}\).
  \end{itemize}
  These are subject to a coherence condition.  In brief, the
  functors~\(\varphi_{b,c}\)
  and the natural equivalences form a transformation between the
  homomorphisms
  \[
  \mathcal{B}^\op \times \mathcal{C}\rightrightarrows
  \mathbf{Cat},\qquad
  (b,c)\mapsto \mathcal{C}(L(b),c),\ \mathcal{B}(b,R(c)).
  \]
\end{defn}

Here~\(\mathbf{Cat}\)
is the bicategory of categories, see Example~\ref{exa:Cat}.

\begin{thm}[\cite{Fiore:Pseudo_biadjoints}*{Theorem 9.17}]
  \label{the:adjoint_criterion}
  Let \(\mathcal{B}\)
  and~\(\mathcal{C}\)
  be bicategories and let \(R\colon \mathcal{C}\to\mathcal{B}\)
  be a homomorphism.  It is part of an adjunction if and only if
  there are universal arrows from~\(c\)
  to~\(R\) for each object \(c\in \obj \mathcal{C}\).
\end{thm}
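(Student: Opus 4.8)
The plan is to prove both implications, the content residing entirely in manufacturing a left biadjoint from a family of universal arrows. Throughout I use the two‑dimensional universal property packaged in Definition~\ref{def:universalarrow}: a universal arrow \((c_b,g_b)\) from an object \(b\in\obj\mathcal{B}\) is exactly the datum of an equivalence \(g_b^*\colon\mathcal{C}(c_b,x)\simeq\mathcal{B}(b,R(x))\) for every \(x\in\obj\mathcal{C}\).

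One implication is formal. Suppose \(R\) is part of an adjunction as in Definition~\ref{def:adjunction}, with left homomorphism \(L\) and equivalences \(\varphi_{b,c}\colon\mathcal{C}(L(b),c)\simeq\mathcal{B}(b,R(c))\). For each \(b\in\obj\mathcal{B}\) set \(g_b\defeq\varphi_{b,L(b)}(\id_{L(b)})\), the component of the unit. Specialising the naturality squares of the adjunction to \(f=\id_b\) and a variable arrow \(g\colon L(b)\to x\) and evaluating at \(\id_{L(b)}\) gives a natural isomorphism \(\varphi_{b,x}\cong g_b^*\); since the left side is an equivalence, so is \(g_b^*\), and \((L(b),g_b)\) is a universal arrow from \(b\) to~\(R\).

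For the converse I would build \(L\) pointwise. On objects put \(L(b)\defeq c_b\) and set \(\varphi_{b,x}\defeq g_b^*\). For an arrow \(f\colon b\to b'\) in \(\mathcal{B}\), the object \(g_{b'}\cdot f\in\mathcal{B}(b,R(c_{b'}))\) lies in the essential image of \(g_b^*\), yielding an arrow \(L(f)\colon c_b\to c_{b'}\) and a distinguished \(2\)\nb-isomorphism \(\theta_f\colon R(L(f))\cdot g_b\Rightarrow g_{b'}\cdot f\). For a \(2\)\nb-arrow \(\alpha\colon f\Rightarrow f'\), full faithfulness of \(g_b^*\) on \(2\)\nb-cells produces a unique \(L(\alpha)\colon L(f)\Rightarrow L(f')\) compatible with \(\theta_f\), \(\theta_{f'}\) and the whiskering of \(\alpha\) by \(g_{b'}\). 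The compositor is forced the same way: for \(f\colon b\to b'\) and \(f'\colon b'\to b''\), both \(R(L(f')\cdot L(f))\cdot g_b\) and \(R(L(f'\cdot f))\cdot g_b\) are canonically isomorphic to \(g_{b''}\cdot(f'\cdot f)\)—the former by composing \(\theta_f\), \(\theta_{f'}\) and the compositor of \(R\), the latter by \(\theta_{f'\cdot f}\)—so full faithfulness of \(g_b^*\) supplies a unique comparison \(L(f')\cdot L(f)\Rightarrow L(f'\cdot f)\); the unitor arises likewise from \(\theta_{\id_b}\). The pseudofunctor axioms for \(L\) follow from the uniqueness in full faithfulness together with the coherence identities already valid in \(\mathcal{B}\) and for~\(R\).

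It then remains to assemble the adjunction: the equivalences are \(\varphi_{b,c}=g_b^*\), and the natural equivalences filling the squares of Definition~\ref{def:adjunction} are built from the \(\theta_f\) and the functoriality of \(R\). The main obstacle is the terminal coherence condition, namely that the \(\varphi_{b,c}\) and these naturality cells constitute a single transformation \(\mathcal{B}^\op\times\mathcal{C}\rightrightarrows\mathbf{Cat}\). I expect to discharge it uniformly: each identity to be checked is an equation of \(2\)\nb-cells whose source and target lie in the image of some fully faithful \(g_b^*\), so it suffices to verify it after composing with \(g_b\), where it collapses to coherence already present in \(\mathcal{B}\), in \(\mathcal{C}\), and in the homomorphism~\(R\). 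This is the bicategorical incarnation of the classical fact that \(R\) admits a left adjoint as soon as every \(\mathcal{B}(b,R(-))\) is representable, and it is carried out in full by Fiore~\cite{Fiore:Pseudo_biadjoints}.
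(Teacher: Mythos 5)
Your outline is correct and follows the same route as the source: the paper does not prove this theorem itself but cites Fiore, and the pointwise construction you describe (defining \(L\) on objects via the universal arrows, obtaining \(L(f)\) and the compositors from essential surjectivity and full faithfulness of the equivalences \(g_b^*\), and discharging coherence by uniqueness of \(2\)\nb-cells) is exactly the argument of \cite{Fiore:Pseudo_biadjoints}, which the paper then instantiates in Section~\ref{mainsection1} when it describes the reflector \(L\) and the multiplicativity data~\(\lambda\) explicitly. Like the paper, you rightly defer the bulk of the diagram-checking to Fiore, so nothing further is needed.
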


More precisely, let \(c_b\in\obj\mathcal{C}\)
and \(\upsilon_b\colon b\to R(c_b)\)
for \(b\in\obj\mathcal{C}\)
be universal arrows from~\(b\)
to~\(R\).
Then there is an adjoint homomorphism
\(L\colon \mathcal{B}\to\mathcal{C}\)
that maps \(b\mapsto c_b\)
on objects.  In particular, this assignment is part of a homomorphism
of bicategories.

\begin{thm}[\cite{Fiore:Pseudo_biadjoints}*{Theorem~9.20}]
  \label{uniquenessofadjunction}
  Two left adjoints
  \(L,L'\colon\mathcal{B}\rightrightarrows\mathcal{C}\)
  of \(R\colon\mathcal{C}\to\mathcal{B}\)
  are equivalent, that is, there are transformations
  \(L\Rightarrow L'\)
  and \(L'\Rightarrow L\)
  that are inverse to each other up to invertible modifications.
\end{thm}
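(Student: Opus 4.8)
The plan is to mimic the classical uniqueness proof for adjoint functors, replacing hom-sets by hom-categories, natural isomorphisms by pseudonatural equivalences, and the ordinary Yoneda lemma by its bicategorical counterpart. In the $1$-categorical case one splices the two adjunction bijections $\mathcal{C}(L(b),c)\cong\mathcal{B}(b,R(c))\cong\mathcal{C}(L'(b),c)$ and invokes Yoneda to obtain a natural isomorphism $L(b)\cong L'(b)$. I would run the same scheme one categorical level up.

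First, the two adjunctions provide, via Definition~\ref{def:adjunction}, equivalences of categories $\varphi_{b,c}\colon\mathcal{C}(L(b),c)\simeq\mathcal{B}(b,R(c))$ and $\varphi'_{b,c}\colon\mathcal{C}(L'(b),c)\simeq\mathcal{B}(b,R(c))$, each carrying the invertible $2$-cells that make it pseudonatural in $b$ and $c$. I would choose quasi-inverses $\bar\varphi'_{b,c}$ and compose to obtain equivalences $\psi_{b,c}\defeq\bar\varphi'_{b,c}\circ\varphi_{b,c}\colon\mathcal{C}(L(b),c)\simeq\mathcal{C}(L'(b),c)$; the coherence $2$-cells witnessing their pseudonaturality in both variables are assembled from those of $\varphi$, $\varphi'$ and the chosen quasi-inverses.

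Next, I would fix $b\in\obj\mathcal{B}$. Then $(\psi_{b,c})_{c}$ is a pseudonatural equivalence between the representable homomorphisms $\mathcal{C}(L(b),{-})$ and $\mathcal{C}(L'(b),{-})$ from $\mathcal{C}$ to $\mathbf{Cat}$. The bicategorical Yoneda lemma supplies an equivalence of categories between $\mathcal{C}(L'(b),L(b))$ and the category of transformations $\mathcal{C}(L(b),{-})\Rightarrow\mathcal{C}(L'(b),{-})$ with modifications; under it, an equivalence of homomorphisms corresponds to an equivalence in $\mathcal{C}$. Concretely, I would read off the arrow $\eta_b\defeq\psi_{b,L(b)}(1_{L(b)})\colon L'(b)\to L(b)$, which is an equivalence, with a quasi-inverse $\bar\eta_b\colon L(b)\to L'(b)$ obtained the same way from the family $\psi^{-1}$.

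Finally, I would promote these objectwise equivalences to transformations. Pseudonaturality of $\psi_{b,c}$ in $b$, applied to an arrow $f\colon b_2\to b_1$, supplies precisely the invertible comparison $2$-cells between the composites $L(f)\circ\eta_{b_2}$ and $\eta_{b_1}\circ L'(f)$; these are the coherence data of a transformation $L'\Rightarrow L$ with components $\eta_b$, and symmetrically one gets $L\Rightarrow L'$ with components $\bar\eta_b$. The main obstacle, and the only place where genuine bicategorical bookkeeping enters, is to verify that these comparison cells obey the transformation axioms and that the two transformations are mutually inverse up to invertible modifications. Both checks reduce, through the faithfulness part of bicategorical Yoneda, to the invertible $2$-cells already relating $\eta_b\bar\eta_b$ and $\bar\eta_b\eta_b$ to identities, together with the coherence condition packaged into Definition~\ref{def:adjunction}. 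As this is exactly \cite{Fiore:Pseudo_biadjoints}*{Theorem~9.20}, I would either complete this routine verification or simply cite it.
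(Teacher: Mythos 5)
The paper does not prove this statement: it is imported verbatim as \cite{Fiore:Pseudo_biadjoints}*{Theorem~9.20}, so there is no in-paper argument to compare against. Your outline is the correct standard one and matches the strategy of the cited source --- splice the two pseudonatural equivalences of hom-categories, extract the componentwise equivalences $L'(b)\to L(b)$ via bicategorical Yoneda, and let pseudonaturality in $b$ supply the transformation data; the only step deserving an explicit word is that a quasi-inverse of a pseudonatural equivalence can itself be chosen pseudonaturally (by promoting it to an adjoint equivalence in the relevant functor bicategory), which you use implicitly when forming $\bar\varphi'_{b,c}$ and $\psi^{-1}$.
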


Using these general theorems, we may strengthen
Proposition~\ref{cptransformation} (in the form of
Proposition~\ref{pro:Corr_universal_arrow}) to an adjunction theorem:

\begin{cor}
  \label{cor:reflective}
  The sub-bicategory \(\Corr^\NN_\Bim \subseteq \Corr^\NN_\proper\)
  is reflective, that is, the inclusion homomorphism
  \(R\colon \Corr^\NN_\Bim \to \Corr^\NN_\proper\)
  has a left adjoint \textup{(}reflector\textup{)}
  \(L\colon \Corr^\NN_\proper \to \Corr^\NN_\Bim\).
  On objects, this adjoint homomorphism maps
  \[
  (A,\Hilm,J) \mapsto (\CP^0_{J,\Hilm},\CP^1_{J,\Hilm},I_{\CP^1_{J,\Hilm}}).
  \]
\end{cor}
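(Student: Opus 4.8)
The plan is to deduce the corollary formally from the biadjunction criterion of Theorem~\ref{the:adjoint_criterion}, feeding it the universal arrows that Proposition~\ref{pro:Corr_universal_arrow} has already supplied. By the definition of reflectivity for a full sub-bicategory recalled above, asserting that \(\Corr^\NN_\Bim \subseteq \Corr^\NN_\proper\) is reflective is exactly the same as asserting that the inclusion homomorphism \(R\colon \Corr^\NN_\Bim \to \Corr^\NN_\proper\) has a left adjoint. So the entire task reduces to checking the hypothesis of Theorem~\ref{the:adjoint_criterion}: that for every object of the target bicategory \(\Corr^\NN_\proper\) there is a universal arrow to~\(R\). (Here one must be careful with the variance: in the notation of that theorem we have \(\mathcal{C} = \Corr^\NN_\Bim\) and \(\mathcal{B} = \Corr^\NN_\proper\), so the universal arrows are to be indexed by the objects of the \emph{source} \(\Corr^\NN_\proper\) of the prospective reflector, matching the ``More precisely'' paragraph following the theorem.)

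First I would invoke Proposition~\ref{pro:Corr_universal_arrow}, which states precisely that for each object \((A,\Hilm,J)\) of \(\Corr^\NN_\proper\) the covariant correspondence \(\upsilon_{(A,\Hilm,J)}\) is a universal arrow from \((A,\Hilm,J)\) to~\(R\), with target object \((\CP^0_{J,\Hilm},\CP^1_{J,\Hilm},I_{\CP^1_{J,\Hilm}})\). At this point I would note briefly that this target genuinely belongs to \(\Corr^\NN_\Bim\): by Theorem~\ref{the:CP_Hilbi_crossed} and Lemma~\ref{fibers}, \(\CP^1_{J,\Hilm}\) is a Hilbert \(\CP^0_{J,\Hilm}\)-bimodule, and \(I_{\CP^1_{J,\Hilm}}\) is Katsura's ideal for it (cf.\ Example~\ref{exa:universalarrow}), so the triple is an admissible object of the sub-bicategory and the universal arrow does land where the statement requires.

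Granting these universal arrows, Theorem~\ref{the:adjoint_criterion} immediately produces a left adjoint \(L\colon \Corr^\NN_\proper \to \Corr^\NN_\Bim\) of~\(R\), which is the desired reflector. The refinement stated after the theorem moreover identifies the action of \(L\) on objects: choosing \(\upsilon_b\defeq\upsilon_{(A,\Hilm,J)}\) as the universal arrow out of \(b = (A,\Hilm,J)\), the adjoint sends \(b\) to the target \(c_b\) of its universal arrow. This yields exactly the asserted formula \((A,\Hilm,J) \mapsto (\CP^0_{J,\Hilm},\CP^1_{J,\Hilm},I_{\CP^1_{J,\Hilm}})\) on objects, completing the proof.

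I do not expect any real obstacle here: all the analytic substance has been front-loaded into Proposition~\ref{cptransformation} (restated as Proposition~\ref{pro:Corr_universal_arrow}), and the passage from pointwise universal arrows to a genuine adjoint homomorphism is handled entirely by Fiore's general theory. The only points requiring attention are bookkeeping ones — confirming that ``reflective'' for a full sub-bicategory is literally the existence of a left adjoint to the inclusion, and aligning the source/target conventions so that Theorem~\ref{the:adjoint_criterion} is applied with the indexing over \(\obj\Corr^\NN_\proper\). Once these conventions are matched, the corollary is a formal consequence of the two cited theorems.
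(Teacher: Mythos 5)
Your proposal is correct and is essentially the paper's own argument: the corollary is stated there as an immediate consequence of Theorem~\ref{the:adjoint_criterion} applied to the universal arrows \(\upsilon_{(A,\Hilm,J)}\) supplied by Proposition~\ref{pro:Corr_universal_arrow}, with the object formula read off from the ``more precisely'' refinement. Your extra care with the variance of \(\mathcal{B}\) and \(\mathcal{C}\) (indexing the universal arrows over \(\obj\Corr^\NN_\proper\)) is a sensible clarification but does not change the route.
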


The homomorphism~\(L\)
is determined uniquely up to equivalence by
Theorem~\ref{uniquenessofadjunction}.  So we have
characterised the construction of relative Cuntz--Pimsner algebras in
bicategorical terms, as the reflector for the full sub-bicategory
\(\Corr^\NN_\Bim \subseteq \Corr^\NN_\proper\).
By Corollary~\ref{cor:reflective}, the relative Cuntz--Pimsner algebra
construction is part of a homomorphism
\(L\colon \Corr^\NN_\proper \to \Corr^\NN_\Bim\).
For instance, this implies the following:

\begin{cor}
  The relative Cuntz--Pimsner algebras \(\CP_{J,\Hilm}\)
  and \(\CP_{J_1,\Hilm_1}\)
  are Morita equivalent if there is a Morita equivalence~\(\Hilm[F]\)
  between \(\Hilm\)
  and~\(\Hilm_1\)
  as in
  \cite{Muhly-Solel:Morita_equivalence_of_tensor_algebras}*{Definition~\textup{2.1}}
  with \(J\cdot \Hilm[F]\subseteq\Hilm[F] \cdot J_1\).
\end{cor}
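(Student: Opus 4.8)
The plan is to recognise the hypothesised Morita equivalence as an \emph{equivalence} (an arrow invertible up to \(2\)\nb-isomorphism) in the bicategory~\(\Corr^\NN_\proper\), and then to transport it through the relative Cuntz--Pimsner homomorphism of Theorem~\ref{the:CP_functor}. A Morita equivalence between the correspondences \(\Hilm\colon A\leadsto A\) and \(\Hilm_1\colon A_1\leadsto A_1\) in the sense of \cite{Muhly-Solel:Morita_equivalence_of_tensor_algebras}*{Definition~2.1} consists of an \(A,A_1\)-imprimitivity bimodule~\(\Hilm[F]\) together with a correspondence isomorphism \(V\colon\Hilm\otimes_A\Hilm[F]\Rightarrow\Hilm[F]\otimes_{A_1}\Hilm_1\). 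Since an imprimitivity bimodule is proper (its left action gives an isomorphism \(A\cong\Comp(\Hilm[F])\)), the pair \((\Hilm[F],V)\) is a proper covariant correspondence; together with the assumption \(J\cdot\Hilm[F]\subseteq\Hilm[F]\cdot J_1\) it is therefore a well-defined arrow \((A,\Hilm,J)\to(A_1,\Hilm_1,J_1)\) of~\(\Corr^\NN_\proper\).

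First I would construct the candidate inverse. The dual imprimitivity bimodule~\(\Hilm[F]^*\) is an \(A_1,A\)-imprimitivity bimodule inverse to~\(\Hilm[F]\) in the correspondence bicategory~\(\Corr\), with canonical unitaries \(\Hilm[F]\otimes_{A_1}\Hilm[F]^*\cong A\) and \(\Hilm[F]^*\otimes_A\Hilm[F]\cong A_1\). Transporting~\(V\) through this duality yields a correspondence isomorphism \(V^\vee\colon\Hilm_1\otimes_{A_1}\Hilm[F]^*\Rightarrow\Hilm[F]^*\otimes_A\Hilm\), so that \((\Hilm[F]^*,V^\vee)\) is again a proper covariant correspondence. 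The ideal compatibility needed for \((\Hilm[F]^*,V^\vee)\) to be an arrow of~\(\Corr^\NN_\proper\), namely \(J_1\cdot\Hilm[F]^*\subseteq\Hilm[F]^*\cdot J\), is the adjoint statement to the one on~\((\Hilm[F],V)\); it holds because the Rieffel correspondence of the imprimitivity bimodule~\(\Hilm[F]\) identifies the ideals \(J\idealin A\) and \(J_1\idealin A_1\).

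Next I would check that \((\Hilm[F],V)\) and \((\Hilm[F]^*,V^\vee)\) are mutually inverse up to invertible \(2\)\nb-arrows. The duality unitaries \(\Hilm[F]\otimes_{A_1}\Hilm[F]^*\cong A\) and \(\Hilm[F]^*\otimes_A\Hilm[F]\cong A_1\) already exhibit the two composites as having the identity correspondences on~\(A\) and~\(A_1\) as underlying modules; what remains is to verify that these unitaries intertwine the composite covariance isomorphisms \(V\bullet V^\vee\) and \(V^\vee\bullet V\) with the unit covariance isomorphisms \(\iota_{\Hilm}\) and \(\iota_{\Hilm_1}\). This is a diagram chase assembled from the triangle identities relating~\(V\), \(V^\vee\) and the duality unitaries. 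It establishes that \((\Hilm[F],V)\) is an equivalence in~\(\Corr^\NN_\proper\).

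Finally, Theorem~\ref{the:CP_functor} exhibits the relative Cuntz--Pimsner construction as a homomorphism of bicategories \(\Corr^\NN_\proper\to\Corr^\TT\), and a homomorphism of bicategories carries equivalences to equivalences. Hence the induced correspondence \(\CP_{\Hilm[F],V}\colon\CP_{J,\Hilm}\leadsto\CP_{J_1,\Hilm_1}\) of Proposition~\ref{pro:CP_functorial} is an equivalence in~\(\Corr^\TT\), that is, a \(\TT\)\nb-equivariant imprimitivity bimodule between \(\CP_{J,\Hilm}\) and~\(\CP_{J_1,\Hilm_1}\); in particular these algebras are Morita equivalent. The hard part will be the middle two steps: promoting the invertibility of~\(\Hilm[F]\) in~\(\Corr\) to invertibility of the decorated arrow \((\Hilm[F],V)\) in~\(\Corr^\NN_\proper\), i.e.\ confirming that the dual is a legitimate arrow for the ideals and that the covariance isomorphisms are compatible with the duality unitaries.
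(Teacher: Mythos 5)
Your overall strategy is the one the paper intends: the corollary is stated as an immediate consequence of the fact that the relative Cuntz--Pimsner construction is a homomorphism of bicategories (Theorem~\ref{the:CP_functor}, resp.\ Corollary~\ref{cor:reflective}), and homomorphisms carry equivalences to equivalences, while equivalences in \(\Corr^\TT\) are exactly \(\TT\)\nb-equivariant imprimitivity bimodules. So the only content to supply is that \((\Hilm[F],V)\) is an equivalence in \(\Corr^\NN_\proper\) --- precisely the point you single out as ``the hard part.''

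That is where your argument has a genuine gap. You assert that \(J_1\cdot\Hilm[F]^*\subseteq\Hilm[F]^*\cdot J\) ``holds because the Rieffel correspondence identifies the ideals \(J\) and \(J_1\).'' It does not: the Rieffel correspondence gives a unique ideal \(J'\defeq\braket{\Hilm[F]}{J\Hilm[F]}_{A_1}\idealin A_1\) with \(J\cdot\Hilm[F]=\Hilm[F]\cdot J'\), and the stated hypothesis \(J\cdot\Hilm[F]\subseteq\Hilm[F]\cdot J_1\) only says \(J'\subseteq J_1\). The condition you need for the dual arrow, \(J_1\cdot\Hilm[F]^*\subseteq\Hilm[F]^*\cdot J\), is equivalent to the reverse containment \(J_1\subseteq J'\), which does not follow. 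This is not a removable technicality: take \(A=A_1=\Hilm=\Hilm_1=\Hilm[F]=\CC\), \(V=\id\), \(J=\{0\}\), \(J_1=\CC\). The hypothesis \(J\cdot\Hilm[F]\subseteq\Hilm[F]\cdot J_1\) holds trivially, but \(\CP_{J,\Hilm}\) is the Toeplitz algebra and \(\CP_{J_1,\Hilm_1}\cong\Cont(\TT)\), which are not Morita equivalent (their \(K_1\)-groups differ). So with only the one-sided containment, \((\Hilm[F],V)\) is an arrow of \(\Corr^\NN_\proper\) but not an equivalence, and the conclusion fails. Your argument becomes correct and complete exactly when the hypothesis is read as the equality \(J\cdot\Hilm[F]=\Hilm[F]\cdot J_1\) (equivalently, \(J\) and \(J_1\) correspond under the Rieffel correspondence); you should state that you are using this, since it is strictly stronger than the containment you start from --- and, as the example shows, it is also what the corollary itself needs. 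The remaining steps (properness of an imprimitivity bimodule, construction of \(V^\vee\), the triangle-identity diagram chase, and preservation of equivalences by homomorphisms) are fine.
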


The proof of Theorem~\ref{the:adjoint_criterion} also describes the
adjoint functor.  We now describe the reflector
\(L\colon \Corr^\NN_\proper \to \Corr^\NN_\Bim\)
explicitly, thereby explaining part of the proof of
Theorem~\ref{the:adjoint_criterion}.  Much of the work in this proof
is needed to check that various diagrams of \(2\)\nb-arrows
commute.  We do not repeat these computations here.

The homomorphism~\(L\)
maps
\((A, \Hilm, J)\mapsto
(\CP^0_{J,\Hilm},\CP^1_{J,\Hilm},I_{\CP^1_{J,\Hilm}})\)
on objects.  Let \((A, \Hilm, J)\)
and \((A_1, \Hilm_1,J_1)\)
be objects of \(\Corr^\NN_\proper\)
and let \((\Hilm[F], u)\colon(A, \Hilm, J)\to(A_1, \Hilm_1,J_1)\)
be proper covariant correspondences.  We use the notation of the proof
of Proposition~\ref{cptransformation} and
write~\(\bar{\iota}_{\Hilm_1}\)
for the canonical isomorphism
\(\Hilm_1\otimes_{A_1} \CP^0_{J_1,\Hilm_1} \cong
\CP^1_{J_1,\Hilm_1}\otimes_{\CP^0_{J_1,\Hilm_1}} \CP^0_{J_1,\Hilm_1}\)
from Lemma~\ref{fibers}, which is the covariance part
of~\(\upsilon_{(A_1,\Hilm_1,J_1)}\).  Let
\begin{multline*}
  L(\Hilm[F], u)\colon
  (\CP^0_{J,\Hilm},\CP^1_{J,\Hilm},I_{\CP^1_{J,\Hilm}}) \to
  (\CP^0_{J_1,\Hilm_1},\CP^1_{J_1,\Hilm_1},I_{\CP^1_{J_1,\Hilm_1}}),\\
  L(\Hilm[F], u) \defeq
  \left((\Hilm[F]\otimes_{A_1}\CP^0_{J_1, \Hilm_1})^\#, 
    (u\bullet\bar{\iota}_{\Hilm_1})^\#\right).
\end{multline*}
In other words, we first compose~\((\Hilm[F], u)\)
with~\(\upsilon_{(A_1,\Hilm_1,J_1)}\)
to get a covariant correspondence
\((\Hilm[F]\otimes_{A_1}\CP^0_{J_1, \Hilm_1},
u\bullet\bar{\iota}_{\Hilm_1})\)
from \((A, \Hilm, J)\)
to
\((\CP^0_{J_1,\Hilm_1},\CP^1_{J_1,\Hilm_1},I_{\CP^1_{J_1,\Hilm_1}})\)
and then apply the equivalence in Proposition~\ref{cptransformation}.

The construction on covariant correspondences above is clearly
``natural'', that is, functorial for isomorphisms.  Explicitly, \(L\)
maps an isomorphism of covariant correspondences
\(w\colon(\Hilm[F],u)\Rightarrow (\Hilm[F]', u')\) to
\[
L(w)\defeq (w\otimes 1_{\CP^0_{J_1, \Hilm_1}})^\#\colon
L(\Hilm[F],u)\Rightarrow L(\Hilm[F]',u').
\]

To make~\(L\)
a homomorphism, we also need compatibility data for units and
composition of arrows.  The construction of~\(L\)
above maps the identity covariant correspondence on~\((A,\Hilm,J)\)
to
\(\upsilon_{(A,\Hilm,J)}^\#\colon
(\CP^0_{J,\Hilm},\CP^1_{J,\Hilm},I_{\Hilm}) \to
(\CP^0_{J,\Hilm},\CP^1_{J,\Hilm},I_{\Hilm})\).
This is canonically isomorphic to the identity covariant
correspondence on~\((\CP^0_{J,\Hilm},\CP^1_{J,\Hilm},I_{\Hilm})\)
because the equivalence in Proposition~\ref{cptransformation} is by
composition with~\(\upsilon_{(A,\Hilm,J)}\).
This is the unit part in our homomorphism~\(L\).

Let \((\Hilm[F], u)\colon(A, \Hilm, J)\to(A_1, \Hilm_1,J_1)\)
and
\((\Hilm[F]_1, u_1)\colon(A_1, \Hilm_1, J_1)\to (A_2, \Hilm_2,J_2)\)
be proper covariant correspondences.  Then the homomorphism~\(L\)
contains isomorphisms of covariant correspondences
\begin{equation}
  \label{eq:lambda_1}
  \lambda\bigl((\Hilm[F], u), (\Hilm[F]_1, u_1)\bigr)\colon
  L(\Hilm[F], u)\circ L (\Hilm[F]_1, u_1) \Rightarrow
  L\bigl((\Hilm[F], u)\circ (\Hilm[F]_1, u_1)\bigr),
\end{equation}
which are natural for isomorphisms of covariant correspondences and
satisfy some coherence conditions when we compose three covariant
correspondences or compose with identity covariant correspondences.
We take~\(\lambda\) to be the isomorphism
\[
(\Hilm[F]_0\otimes_{A_1}\CP^0_{J_1,\Hilm_1})\otimes_{\CP^0_{J_1,\Hilm_1}}
(\Hilm[F]_1\otimes_{A_2}\CP^0_{J_2,\Hilm_2})\cong
(\Hilm[F]_0\otimes_{A_1}\Hilm[F]_1)\otimes_{A_2} \CP^0_{J_2,\Hilm_2}
\]
given by the left action of~\(\CP^0_{J_1,\Hilm_1}\)
on \((\Hilm[F]_1\otimes_{A_2}\CP^0_{J_2,\Hilm_2})\)
that is constructed in the proof of
Proposition~\ref{pro:CP_functorial}.

The proof of Theorem~\ref{the:adjoint_criterion} builds~\(\lambda\)
using only the universality of the arrows~\(\upsilon_{(A,\Hilm,J)}\).
By the equivalence of categories in
Proposition~\ref{cptransformation}, whiskering (horizontal
composition) with~\(\upsilon_{(A,\Hilm,J)}\)
maps isomorphisms as in~\eqref{eq:lambda_1} bijectively to
isomorphisms
\begin{equation}
  \label{eq:lambda_2}
  \upsilon_{(A,\Hilm,J)} \circ L(\Hilm[F], u)\circ L (\Hilm[F]_1, u_1) \Rightarrow
  \upsilon_{(A,\Hilm,J)} \circ L\bigl((\Hilm[F], u)\circ (\Hilm[F]_1, u_1)\bigr).
\end{equation}
The construction of~\(L\)
implies
\(\upsilon_{(A,\Hilm,J)} \circ L(\Hilm[F], u)\circ L (\Hilm[F]_1, u_1)
\cong (\Hilm[F], u)\circ \upsilon_{(A_1,\Hilm_1,J_1)} \circ L
(\Hilm[F]_1, u_1) \cong (\Hilm[F], u)\circ (\Hilm[F]_1, u_1) \circ
\upsilon_{(A_2,\Hilm_2,J_2)}\)
and
\(\upsilon_{(A,\Hilm,J)} \circ L\bigl((\Hilm[F], u)\circ (\Hilm[F]_1,
u_1)\bigr) \cong \bigl((\Hilm[F], u)\circ (\Hilm[F]_1, u_1)\bigr)
\circ \upsilon_{(A_2,\Hilm_2,J_2)}\),
where we disregard associators.  Hence there is a canonical
isomorphism of covariant correspondences as in~\eqref{eq:lambda_2}.
This Ansatz produces the same isomorphisms~\(\lambda\)
as above.  We have now described the data of the homomorphism~\(L\).
Fiore's arguments in~\cite{Fiore:Pseudo_biadjoints} show that it is
indeed a homomorphism.

\begin{prop}
  The composite of~\(L\)
  and the crossed product homomorphism
  \(\Corr^\NN_\Bim \to \Corr^\TT\)
  is naturally isomorphic to the homomorphism
  \(\Corr^\NN_\proper \to \Corr^\TT\)
  of Theorem~\textup{\ref{the:CP_functor}}.
\end{prop}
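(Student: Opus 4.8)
The plan is to obtain the statement by whiskering the unit of the adjunction from Corollary~\ref{cor:reflective} with the relative Cuntz--Pimsner homomorphism. Write $\Phi\colon\Corr^\NN_\proper\to\Corr^\TT$ for the homomorphism of Theorem~\ref{the:CP_functor}, let $R\colon\Corr^\NN_\Bim\to\Corr^\NN_\proper$ be the inclusion, and let $L\colon\Corr^\NN_\proper\to\Corr^\NN_\Bim$ be the reflector. By Proposition~\ref{pro:Hilbi_CP}, the restriction $\Phi\circ R$ is naturally isomorphic to the crossed product homomorphism $\Corr^\NN_\Bim\to\Corr^\TT$. Hence the composite to be identified is $\Phi\circ R\circ L$, and it suffices to construct a natural isomorphism $\Phi\cong\Phi\circ R\circ L$. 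By Theorem~\ref{the:adjoint_criterion}, the universal arrows $\upsilon_{(A,\Hilm,J)}$ of Proposition~\ref{pro:Corr_universal_arrow} are the components of the unit transformation $\upsilon\colon\id_{\Corr^\NN_\proper}\Rightarrow R\circ L$.

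First I would whisker this unit on the left by~$\Phi$. Since $\Phi$ is a homomorphism of bicategories and $\upsilon$ is a transformation, this produces a transformation of the shape $\Phi=\Phi\circ\id_{\Corr^\NN_\proper}\Rightarrow\Phi\circ R\circ L$, whose component at an object $(A,\Hilm,J)$ is the $\TT$\nb-equivariant correspondence $\Phi(\upsilon_{(A,\Hilm,J)})$. Its naturality $2$\nb-cells and coherence data are produced by applying $\Phi$ to the corresponding data of~$\upsilon$ and inserting the compositors of~$\Phi$; this is the standard recipe for whiskering a transformation by a homomorphism, so no fresh coherence verification is required at this stage.

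The substantive step is to check that every component $\Phi(\upsilon_{(A,\Hilm,J)})$ is an equivalence in $\Corr^\TT$. For this I would run the construction of Proposition~\ref{pro:CP_functorial} on $\upsilon_{(A,\Hilm,J)}$. Its underlying correspondence is $\CP^0_{J,\Hilm}\colon A\leadsto\CP^0_{J,\Hilm}$, and the target object is a Hilbert bimodule with Katsura's ideal, whose relative Cuntz--Pimsner algebra is $\CP^0_{J,\Hilm}\rtimes\CP^1_{J,\Hilm}$ by Proposition~\ref{pro:Hilbi_CP}. The associated Hilbert module is therefore
\[
\CP^0_{J,\Hilm}\otimes_{\CP^0_{J,\Hilm}}\bigl(\CP^0_{J,\Hilm}\rtimes\CP^1_{J,\Hilm}\bigr)\cong\CP^0_{J,\Hilm}\rtimes\CP^1_{J,\Hilm},
\]
and the representation $(\pi,t)$ built in Proposition~\ref{pro:CP_functorial} acts by left multiplication with $\pi_J(a)\in\CP^0_{J,\Hilm}$ and $t_J(\xi)\in\CP^1_{J,\Hilm}$. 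By the universal property of Proposition~\ref{universalpropertycp}, the resulting homomorphism $\tilde\pi$ from $\CP_{J,\Hilm}$ to $\Comp\bigl(\CP^0_{J,\Hilm}\rtimes\CP^1_{J,\Hilm}\bigr)$ is left multiplication through the $\TT$\nb-equivariant isomorphism $\CP_{J,\Hilm}\cong\CP^0_{J,\Hilm}\rtimes\CP^1_{J,\Hilm}$ of Theorem~\ref{the:CP_Hilbi_crossed}. Thus $\Phi(\upsilon_{(A,\Hilm,J)})$ is the invertible correspondence implementing that isomorphism, and in particular an equivalence.

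Finally I would invoke the general principle that a transformation between homomorphisms of bicategories is itself an equivalence once all of its object-components are equivalences. Applied to the whiskered unit, this yields the desired natural isomorphism $\Phi\cong\Phi\circ R\circ L$, which is the assertion. I expect the main obstacle to be the component computation of the third paragraph, namely recognising $\Phi(\upsilon_{(A,\Hilm,J)})$ as precisely the Abadie--Eilers--Exel isomorphism of Theorem~\ref{the:CP_Hilbi_crossed} and not merely as some Morita equivalence; once that identification is in hand, everything else is formal and is supplied by Fiore's framework together with the whiskering construction.
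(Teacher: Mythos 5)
Your proof is correct, but it is organised around a different decomposition than the paper's own argument. The paper compares the two homomorphisms directly: they agree on objects by Propositions~\ref{katsurasalgebra} and~\ref{pro:Hilbi_CP}, and since \((\Hilm[F]^{\#},u^{\#})\) in Proposition~\ref{cptransformation} was obtained as the degree\nb-\(0\) part of the correspondence \(\CP_{\Hilm[F],u}\) of Proposition~\ref{pro:CP_functorial}, the crossed product of \(L(\Hilm[F],u)\), namely \((\Hilm[F]\otimes_{A_1}\CP^0_{J_1,\Hilm_1})\otimes_{\CP^0_{J_1,\Hilm_1}}(\CP^0_{J_1,\Hilm_1}\rtimes\CP^1_{J_1,\Hilm_1})\), is identified with \(\Hilm[F]\otimes_{A_1}\CP_{J_1,\Hilm_1}=\CP_{\Hilm[F],u}\) by the nondegenerate left action of \(\CP^0_{J_1,\Hilm_1}\) on \(\CP_{J_1,\Hilm_1}\); these multiplication maps are the \(2\)\nb-arrows of a natural isomorphism whose object components are essentially identity correspondences. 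You instead whisker the unit of the biadjunction with the Cuntz--Pimsner homomorphism~\(\Phi\) and reduce everything to the single component computation that \(\Phi(\upsilon_{(A,\Hilm,J)})\) is the invertible correspondence implementing the isomorphism of Theorem~\ref{the:CP_Hilbi_crossed}; your identification of that component is carried out correctly, and the underlying computation is the same multiplication map in both approaches. What your route buys is that all naturality and coherence checks are delegated to general bicategory theory (whiskering a transformation by a homomorphism, and the fact that a transformation with equivalence components and invertible naturality \(2\)\nb-arrows is an equivalence). What it costs is that you must take for granted that Fiore's construction behind Theorem~\ref{the:adjoint_criterion} really packages the universal arrows \(\upsilon_{(A,\Hilm,J)}\) together with the naturality isomorphisms relating \(\upsilon\) and \(L\) on arrows (recorded only informally at the end of Section~\ref{mainsection1}) into a coherent unit transformation \(\id\Rightarrow R\circ L\); the paper's hands-on construction avoids that dependence at the price of leaving its own coherence verifications implicit.
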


\begin{proof}
  Our homomorphisms agree on objects by
  Proposition~\ref{katsurasalgebra}.  The proof of
  Proposition~\ref{cptransformation} constructed the covariant
  correspondence \((\Hilm[F]^\#,u^\#)\)
  by taking the degree-\(0\)
  part in the correspondence constructed in the proof of
  Proposition~\ref{pro:CP_functorial}.  Thus we may build a natural
  isomorphism between the functors in question out of the
  nondegenerate left action of~\(\CP^0_{J_1,\Hilm_1}\)
  on~\(\CP_{J_1,\Hilm_1}\).
\end{proof}

So the reflector~\(L\)
lifts the Cuntz--Pimsner algebra homomorphism
\(\Corr^\NN_\proper \to \Corr^\TT\)
to a homomorphism with values in~\(\Corr^\NN_\Bim\).
Such a lifting should exist because a Hilbert
bimodule and its crossed product with the \(\TT\)\nb-action
determine each other.

An adjunction also contains ``natural'' equivalences of categories
\(\varphi_{b,c}\colon \mathcal{C}(L(b),c) \simeq
\mathcal{B}(b,R(c))\),
where naturality is further data, see Definition~\ref{def:adjunction}.
In the case at hand, these equivalences are exactly the equivalences
of categories
\[
\upsilon_{(A,\Hilm,J)}^*\colon  \Corr^\NN_\proper\bigl((A,\Hilm,J), (B,\Hilm[G],I_{\Hilm[G]})\bigr)
\simeq
\Corr^\NN_\Bim\bigl((\CP^0_{J,\Hilm},\CP^1_{J,\Hilm},I_{\CP^1_{J,\Hilm}}), (B,\Hilm[G],I_{\Hilm[G]})\bigr).
\]
in Proposition~\ref{cptransformation}.  Their naturality boils down to
the canonical isomorphisms of correspondences
\(\upsilon_{(A,\Hilm,J)} \circ L(\Hilm[F], u) \cong (\Hilm[F], u)
\circ \upsilon_{(A,\Hilm,J)}\),
which we have already used above to describe the multiplicativity
data~\(\lambda\) in the homomorphism~\(L\).

Finally, we relate our adjunction to the colimit description of
Cuntz--Pimsner algebras in~\cite{Albandik-Meyer:Colimits}.  Let
\(\mathcal{C}\) and~\(\mathcal{D}\) be categories.
Let~\(\mathcal{C}^\mathcal{D}\)
be the category of functors \(\mathcal{D}\to\mathcal{C}\),
which are also called diagrams of shape~\(\mathcal{D}\)
in~\(\mathcal{C}\).
Identify~\(\mathcal{C}\)
with the subcategory of ``constant'' diagrams
in~\(\mathcal{C}^\mathcal{D}\).
This subcategory is reflective if and only if all
\(\mathcal{D}\)\nb-shaped
diagrams in~\(\mathcal{C}\)
have a colimit, and the reflector maps a diagram to its colimit.

This remains true for the bicategorical colimits
in~\cite{Albandik-Meyer:Colimits}: by definition, the colimit of a
diagram is a universal arrow to a constant diagram.  In our context, a
constant diagram in~\(\Corr^\NN_\proper\)
is an object of the form \((B,B,B)\)
that is, the Hilbert \(B\)\nb-bimodule
is the identity bimodule and \(J=B\)
as always for objects of~\(\Corr^\NN_\Bim\).
Since the condition \(J\cdot\Hilm[F] \subseteq \Hilm[F]\cdot B\)
always holds, the ideal~\(J\)
plays no role, compare Remark~\ref{rem:covariance_simplifies_Bmax}.

A proper covariant correspondence \((A,\Hilm,J) \to (B,B,B)\)
is equivalent to a proper correspondence
\(\Hilm[F]\colon A\leadsto B\)
with an isomorphism \(\Hilm \otimes_A \Hilm[F] \Rightarrow \Hilm[F]\)
because \(\Hilm[F]\otimes_B B \cong \Hilm[F]\).
As shown in~\cite{Albandik-Meyer:Colimits}, such a pair is equivalent
to a representation \((\varphi,t)\)
of the correspondence~\(\Hilm\)
on~\(\Hilm[F]\)
that is nondegenerate in the sense that
\(t(\Hilm)\cdot \Hilm[F]=\Hilm[F]\).
The properness of~\(\Hilm[F]\)
means that \(\varphi(A)\subseteq \Comp(\Hilm[F])\),
which implies \(t(\Hilm) \subseteq\Comp(\Hilm[F])\).

It is shown in~\cite{Albandik-Meyer:Colimits} that all diagrams of
\emph{proper} correspondences of any shape have a colimit.  This is
probably false for diagrams of non-proper correspondences, such as the
correspondence \(\ell^2(\NN)\colon \CC\leadsto \CC\)
that defines the Cuntz algebra~\(\CP_\infty\).
The way around this problem that we found here is to enlarge the
sub-bicategory of constant diagrams, allowing diagrams of Hilbert
bimodules.  In addition, we added an ideal~\(J\)
to have enough data to build \emph{relative} Cuntz--Pimsner
algebras.

Since the sub-bicategory \(\Corr\subseteq \Corr^\NN_\proper\)
of constant diagrams is contained in~\(\Corr^\NN_\Bim\),
we may relate universal arrows to objects in~\(\Corr\)
and \(\Corr^\NN_\Bim\)
as follows.  Let \((A,\Hilm,J)\)
be an object of \(\Corr^\NN_\proper\).  Then
\(\upsilon_{(A,\Hilm,J)} \colon (A,\Hilm,J) \to
(\CP^0_{J,\Hilm},\CP^1_{J,\Hilm},I_{\CP^1_{J,\Hilm}})\)
is a universal arrow to an object of~\(\Corr^\NN_\Bim\)
by Proposition~\ref{pro:Corr_universal_arrow}.  The universality
of~\(\upsilon_{(A,\Hilm,J)}\)
implies that a universal arrow from~\((A,\Hilm,J)\)
to a constant diagram factors through~\(\upsilon_{(A,\Hilm,J)}\),
and that an arrow from
\((\CP^0_{J,\Hilm},\CP^1_{J,\Hilm},I_{\CP^1_{J,\Hilm}})\)
to a constant diagram is universal if and only if its composite
with~\(\upsilon_{(A,\Hilm,J)}\)
is universal.  In other words, the diagram \((A,\Hilm,J)\)
has a colimit if and only if
\((\CP^0_{J,\Hilm},\CP^1_{J,\Hilm},I_{\CP^1_{J,\Hilm}})\)
has one, and then the two colimits are the same.  We are dealing with
the same colimits as in~\cite{Albandik-Meyer:Colimits} because the
ideal~\(J\)
in \((A,\Hilm,J)\) plays no role for arrows to constant diagrams.

\appendix
\section{Bicategories}

We recall some basic definitions from bicategory theory, following
\cites{Benabou:Bicategories, Gray:lns}.  We also give a few examples
with Sections \ref{mainsection} and~\ref{mainsection1} in mind.

\begin{defn}
  \label{def:bicategory}
  A \emph{bicategory}~\(\mathcal{B}\) consists of the following data:
  \begin{itemize}
  \item a set of objects \(\obj\mathcal{B}\);
  \item a category \(\mathcal{B}(x,y)\)
    for each pair of objects \((x,y)\);
    objects of \(\mathcal{B}(x,y)\)
    are called \emph{arrows} (or \emph{morphisms}) from~\(x\)
    to~\(y\),
    and arrows in \(\mathcal{B}(x,y)\)
    are called \emph{\(2\)\nb-arrows}
    (or \emph{\(2\)\nb-morphisms});
    the category structure on \(\mathcal{B}(x,y)\)
    gives us a \emph{unit \(2\)\nb-arrow}~\(1_f\)
    on each arrow \(f\colon x\to y\),
    and a \emph{vertical composition} of \(2\)\nb-arrows:
    \(w_0\colon f_0\Rightarrow f_1\)
    and \(w_1\colon f_1\Rightarrow f_2\)
    compose to a \(2\)\nb-arrow
    \(w_1\cdot w_0\colon f_0\Rightarrow f_2\);

  \item composition functors
    \[
    {\circ}\colon \mathcal{B}(y,z)\times\mathcal{B}(x,y)\to\mathcal{B}(x,z)
    \]
    for each triple of objects \((x,y,z)\);
    this contains a \emph{horizontal composition} of \(2\)\nb-arrows
    as displayed below:
    \[
    \xymatrix{
      x\ar@/^1.2pc/[rr]^{f_0}="a" \ar@/^-1.2pc/[rr]_{f_1}="b"& &
      y\ar@/^1.2pc/[rr]^{g_0}="c"\ar@/^-1.2pc/[rr]_{g_1}="d" & &
      z \ar@{=>}^{w_0}"a";"b" \ar@{=>}^{w_1}"c";"d" 
    }\mapsto
    \xymatrix{
      x\ar@/^1.2pc/[rrrr]^{g_0\cdot f_0}="a" \ar@/^-1.2pc/[rrrr]_{g_1\cdot f_1}="b" & & &&
      z. \ar@{=>}^{w_1\bullet w_0 }"a";"b"
    }
    \]

  \item a unit arrow \(1_x\in\mathcal{B}(x,x)\) for each~\(x\);

  \item natural invertible \(2\)\nb-arrows
    (\emph{unitors})
    \(r_f\colon f\cdot 1_x \Rightarrow f\)
    and \(l_f\colon 1_y\cdot f \Rightarrow f\)
    for all \(f\in\mathcal{B}(x,y)\);

  \item natural isomorphisms
    \[
    \xymatrix{
      \mathcal{B}(x,y)\times\mathcal{B}(y,z)\times\mathcal{B}(z,w)
      \ar[rr]^{(\circ,1)} \ar[d]_{(1,\circ)}& &
      \mathcal{B}(x,z)\times\mathcal{B}(z,w)\ar[d]^{\circ} \\
      \mathcal{B}(x,y)\times\mathcal{B}(y,w) \ar[rr]^{\circ}\ar@{=>}[rru]^{a} & &
      \mathcal{B}(x,w);
    }
    \]
    that is, natural invertible \(2\)\nb-arrows,
    called \emph{associators},
    \[
    a(f_1,f_2,f_3)\colon(f_3\cdot f_2)\cdot
    f_1\underset{\simeq}{\Rightarrow} f_3\cdot(f_2\cdot f_1),
    \]
    where \(f_1\colon x\to y\),
    \(f_2\colon y\to z\) and \(f_3\colon z\to w\).
  \end{itemize}
  This data must make the following diagrams commute:
  \[
  \xymatrix{
    ((f_4\cdot f_3)\cdot f_2)\cdot f_1\ar@{=>}[r]^{} \ar@{=>}[d] &
    (f_4\cdot f_3)\cdot (f_2\cdot f_1)  \ar@{=>}[r]^{}&
    f_4\cdot (f_3\cdot(f_2\cdot f_1))\\
    (f_4\cdot (f_3\cdot f_2))\cdot f_1 \ar@{=>}[rr]^{}&&
    f_4\cdot ((f_3\cdot f_2)\cdot f_1) \ar@{=>}[u]^{},}
  \]
  \[
  \xymatrix{ (f_2\cdot1_y)\cdot f_1\ar@{=>}[rr]_{} \ar@{=>}[rrd]_{}& & f_2\cdot(1_y\cdot f_1)\ar@{=>}[d]^{} \\
    & &f_2\cdot f_1,}
  \]
  where \(f_1, f_2, f_3\),
  and~\(f_4\)
  are composable arrows, and the \(2\)\nb-arrows
  are associators and unitors and horizontal products of them with
  unit \(2\)\nb-arrows.
\end{defn}

We write ``\(\cdot\)''
or nothing for vertical products and ``\(\bullet\)''
for horizontal products.

\begin{example}
  \label{exa:Cat}
  Categories form a bicategory \(\mathbf{Cat}\)
  with functors as arrows and natural transformations as
  \(2\)\nb-arrows.
  Here the composition of morphisms is strictly associative and
  unital, that is, \(\mathbf{Cat}\) is even a \(2\)\nb-category.
\end{example}

\begin{example}
  \label{categoryexample}
  A category~\(\mathcal{C}\)
  may be regarded as a bicategory in which the categories
  \(\mathcal{C}(x,y)\) have only identity arrows.
\end{example}

\begin{example}
  The correspondence bicategory~\(\Corr\)
  is defined in~\cite{Buss-Meyer-Zhu:Higher_twisted} as the bicategory
  with \(\Cst\)\nb-algebras
  as objects, correspondences as arrows, and correspondence
  isomorphisms as \(2\)-arrows.
  The unit arrow~\(1_A\)
  on a \(\Cst\)\nb-algebra~\(A\)
  is~\(A\)
  viewed as a Hilbert \(A\)\nb-bimodule
  in the canonical way.  The \(A,B\)\nb-bimodule
  structure on~\(\Hilm[F]\)
  provides the unitors \(A\otimes_A\Hilm[F] \Rightarrow \Hilm[F]\)
  and \(\Hilm[F]\otimes_BB \Rightarrow \Hilm[F]\)
  for a correspondence \(\Hilm[F]\colon A\leadsto B\).
  The associators
  \((\Hilm\otimes_A \Hilm[F])\otimes_B \Hilm[G] \Rightarrow
  \Hilm\otimes_A (\Hilm[F]\otimes_B \Hilm[G])\)
  are the obvious isomorphisms.
\end{example}

\begin{defn}
  \label{homomorphismbicategory}
  Let \(\mathcal{B}, \mathcal{C}\)
  be bicategories.  A \emph{homomorphism}
  \(F\colon \mathcal{B}\to\mathcal{C}\) consists of
  \begin{itemize}
  \item a map \(F\colon \obj\mathcal{B}\to\obj\mathcal{C}\)
    between the object sets;

  \item functors
    \(F_{x,y}\colon \mathcal{B}(x,y)\to\mathcal{C}(F^0(x),F^0(y))\)
    for all \(x,y\in \obj\mathcal{B}\);

  \item natural transformations
    \[
    \xymatrix{
      \mathcal{B}(y,z)\times\mathcal{B}(x,y)\ar[rr]^{\circ} \ar[d]_{(F_{y,z},F_{x,y})}& &
      \mathcal{B}(x,z)\ar[d]^{F_{x,z}} \\
      \mathcal{C}(F(y),F(w))\times\mathcal{C}(F(x),F(y))
      \ar[rr]^{\circ}\ar@{=>}[rru]^{\varphi_{xyz}} & &
      \mathcal{C}(F(x),F(z))
    }
    \]
    for all triples \(x,y,z\)
    of objects of~\(\mathcal{B}\);
    explicitly, these are natural \(2\)\nb-arrows
    \(\varphi(f_1,f_2)\colon F_{y,z}(f_2)\cdot F_{x,y}(f_1)\Rightarrow
    F_{x,z}(f_2\cdot f_1)\);

  \item \(2\)\nb-arrows
    \(\varphi_x\colon 1_{F(x)}\Rightarrow F_{x,x}(1_x)\)
    for all objects~\(x\)
    of~\(\mathcal{B}\).
  \end{itemize}
  This data must make the following diagrams commute:
  \begin{equation}
    \begin{gathered}
      \label{associativityaxiom}
      \xymatrix{(F_{z,w}(f_3)\cdot F_{y,z}(f_2))\cdot F_{x,y}(f_1)
        \ar@{=>}[r]^{a'} \ar@{=>}[d]_{\varphi(f_2,f_3)\bullet1_{F_{x,y}(f_1) }}&
        F_{z,w}(f_3)\cdot (F_{y,z}(f_2)\cdot F_{x,y}(f_1))
        \ar@{=>}[d]^{1_{F_{z,w}(f_3) }\bullet\varphi(f_1,f_2)} \\
        F_{y,w}(f_3\cdot f_2)\cdot F_{x,y}(f_1) \ar@{=>}[d]_{\varphi(f_1,f_3\cdot f_2)}&
        F_{z,w}(f_3)\cdot F_{x,z}(f_2\cdot f_1) \ar@{=>}[d]_{\varphi(f_2\cdot f_1,f_3)}\\
        F_{x,w}((f_3\cdot f_2)\cdot f_1)\ar@{=>}[r]^{F_{x,w}(a)} &
        F_{x,w}(f_3\cdot (f_2\cdot f_1));
      }
    \end{gathered}
  \end{equation}
  \begin{equation}
    \begin{gathered}
      \label{rightunitaxiom}
      \xymatrix{ F_{x,y}(f_1)\cdot F_{x,x}(1_x)\ar@{=>}[rr]^{\varphi(1_x,f_1)} & &
        F_{x,y}(f_1\cdot 1_x)\ar@{=>}[d]^{F_{x,y}(r_{f_1})} \\
        F_{x,y}(f_1)\cdot 1_{F(x)}
        \ar@{=>}[rr]^{r'_{F_{x,y}(f_1)}}\ar@{=>}[u]^{1_{F_{x,y}(f_1)}\bullet\varphi_x} & &
        F_{x,y}(f_1);
      }
    \end{gathered}
  \end{equation}
  \begin{equation}
    \begin{gathered}
      \label{leftunitaxiom}
      \xymatrix{
        F_{y,y}(1_y)\cdot F_{x,y}(f_1) \ar@{=>}[rr]^{\varphi(f_1, 1_y)} & &
        F_{x,y}(1_y\cdot f_1)\ar@{=>}[d]^{F_{x,y}(l_{f_1})} \\
        1_{F(y)}\cdot F_{x,y}(f_1)
        \ar@{=>}[rr]^{l'_{F_{x,y}(f_1)}}\ar@{=>}[u]^{\varphi_y\bullet1_{F_{x,y}(f_1)}} & &
        F_{x,y}(f_1).
      }
    \end{gathered}
  \end{equation}
\end{defn}

\begin{example}
  A semigroup~\(P\)
  may be viewed a category with one object and~\(P\)
  as its set of arrows.  It may be viewed as a bicategory as well as
  in Example~\ref{categoryexample}.  A homomorphism from~\(P\)
  to \(\Corr\)
  is equivalent to an essential product system
  \((A, (\Hilm_p)_{p\in P^\op},\mu)\)
  over~\(P^\op\)
  as defined by Fowler~\cite{Fowler:Product_systems}.  The
  condition~\eqref{associativityaxiom} says that the multiplication
  maps
  \(\mu_{p,q}\colon
  \Hilm_p\otimes_A\Hilm_q\overset{\simeq}{\to}\Hilm_{q p}\)
  are associative.  The conditions \eqref{rightunitaxiom} and
  \eqref{leftunitaxiom} mean that
  \(\mu_{1,p}(a\otimes\xi)=\varphi_p(a)\xi\)
  and \(\mu_{p,1}(\xi\otimes a)=\xi a\)
  for \(a\in A\), \(\xi\in\Hilm_p\).
\end{example}

A morphism \(f\colon x\to y\)
in a bicategory~\(\mathcal{B}\)
induces functors
\[
f_*\colon\mathcal{B}(c,x)\to\mathcal{B}(c,y),\qquad
f^*\colon \mathcal{B}(y,c)\to\mathcal{B}(x,c)
\]
for \(c\in\obj \mathcal{B}\)
by composing arrows with~\(f\)
and composing \(2\)\nb-arrows
horizontally with~\(1_f\)
on one side (this is also called \emph{whiskering} with~\(f\)).

\begin{defn}
  \label{transfbicategories}
  Let \(F, G\colon\mathcal{B}\rightrightarrows\mathcal{C}\)
  be homomorphisms.  A \emph{transformation}
  \(\alpha\colon F\Rightarrow G\) consists of
  \begin{itemize}
  \item morphisms \(\alpha_x\colon F(x)\to G(x)\) for all
    \(x\in \obj\mathcal{B}\);

  \item natural
    transformations
    \[
    \xymatrix{
      \mathcal{B}(x,y)\ar[d]_{G_{x,y}} \ar[rr]^{F_{x,y}} & &
      \mathcal{C}(F(x),F(y))\ar[d]^{{\alpha_y}_*} \ar@{=>}[lld]_{\alpha_{x,y}}\\
      \mathcal{C}(G(x),G(y)) \ar[rr]^{{\alpha_x}^*} & &
      \mathcal{C}(F(x),G(y)),
    }
    \]
    that is, \(2\)\nb-arrows
    \(\alpha_{x,y}(f)\colon\alpha_y F_{x,y}(f)\Rightarrow
    G_{x,y}(f)\alpha_x\) for all \(x,y\in \obj\mathcal{B}\).
  \end{itemize}
  This data must make the following diagrams commute:
  \[
  \xymatrix{
    \alpha_z (F_{y,z}(g)F_{x,y}(f))  \ar@{<=>}[d]
    \ar@{=>}[r]^-{1\bullet\varphi_F(f,g)}&
    \alpha_z F_{x,z}(g f) \ar@{=>}[r]^{\alpha_{x,z}(gf)}&
    G_{x,z}(g f) \alpha_x\ar@{<=}[d]^{\varphi_G(f,g)\bullet1} \\
    (\alpha_z F_{y,z}(g))F_{x,y}(f) &&
    (G_{y,z}(g)G_{x,y}(f))\alpha_x \\
    (G_{y,z}(g)\alpha_y) F_{x,y}(f)
    \ar@{<=>}[r]^{}\ar@{<=}[u]^{\alpha_{y,z}(g)\bullet1} &
    G_{y,z}(g)(\alpha_y F_{x,y}(f))  \ar@{=>}[r]^{1\bullet\alpha_{x,y}(f)}&
    G_{y,z}(g)( G_{x,y}(f)\alpha_x) \ar@{<=>}[u]^{} .
  }
  \]
  \[
  \xymatrix{
    \alpha_x F_{x,x}(1_x)& &
    \alpha_x 1_{F(x)}  \ar@{=>}[ll]_{1_{\alpha_x}\bullet \varphi^F_x} \ar@{=>}[rr]^{r}&& \alpha_x\\
    G_{x,x}(1_x) \alpha_x
    \ar@{<=}[u]^{\alpha_{x,x}(1_x)} & & &&
    1_{G(x)} \alpha_x;  \ar@{=>}[llll]_{\varphi^G_x \bullet1_{\alpha_x}}
    \ar@{<=}[u]_{l^{-1}}
  }
  \]
\end{defn}

\begin{example}
  Let~\(G\)
  be a group.  A transformation between homomorphisms \(G\to\Corr\)
  consists of a correspondence \(\Hilm[F]\colon A\leadsto B\)
  and isomorphisms
  \(\alpha_s\colon\Hilm_s\otimes_A\Hilm[F]\simeq\Hilm[F]\otimes_B\Hilm[G]_s\)
  so that the following diagrams commute for all \(s,t\in G\):
  \[
  \xymatrix{
    (\Hilm_s\otimes_A\Hilm_t)\otimes_A\Hilm[F]  \ar@{<=>}[d]
    \ar@{=>}[rr]^{w^{1}_{s,t}\otimes1}& &
    \Hilm_{st}\otimes_A\Hilm[F]\ar@{=>}[rr]^{\alpha_{st}}&&
    \Hilm[F]\otimes_B\Hilm[G]_{st} \ar@{<=}[d]^{1\otimes w^2_{s,t}} \\
    \Hilm_s\otimes_A(\Hilm_t\otimes_A\Hilm[F]) && &&
    \Hilm[F]\otimes_B(\Hilm[G]_s\otimes_B\Hilm[G]_t)  \\
    \Hilm_s\otimes_A(\Hilm[F]\otimes_B\Hilm[G]_t)
    \ar@{<=>}[rr]^{}\ar@{<=}[u]^{1\otimes\alpha_t} & &
    (\Hilm_s\otimes_A\Hilm[F])\otimes_B\Hilm[G]_t  \ar@{=>}[rr]^{\alpha_s\otimes 1}&&
    (\Hilm[F]\otimes_B\Hilm[G]_s)\otimes_B\Hilm[G]_t \ar@{<=>}[u]^{} .
  }
  \]
  This is called a \emph{correspondence} of Fell bundles (see
  \cite{Buss-Meyer-Zhu:Higher_twisted}*{Proposition~3.23}).
\end{example}

\begin{defn}
  \label{def:modification}
  Let \(\alpha, \beta\colon F\Rightarrow G\)
  be transformations between homomorphisms.  A \emph{modification}
  \(\Delta\colon\alpha\Rrightarrow\beta\)
  is a family of \(2\)\nb-arrows
  \(\Delta_x\colon\alpha_x\Rightarrow\beta_x\)
  such that for every \(2\)\nb-arrow
  \(w\colon f_1\Rightarrow f_2\)
  for arrows \(f_1,f_2\colon x\to y\), the following diagram commutes:
  \[
  \xymatrix{
    \alpha_yF_{x,y}(f_1)\ar@{=>}[d]_{\alpha_{x,y}(f_1)}
    \ar@{=>}[rr]^{\Delta_y\bullet F_{x,y}(w)} & &
    \beta_yF_{x,y}(f_2)\ar@{=>}[d]^{\beta_{x,y}(f_2)}\\
    G_{x,y}(f_1)\alpha_x \ar@{=>}[rr]^{G_{x,y}(w)\bullet\Delta_x} & &
    G_{x,y}(f_2)\beta_x
  }
  \]
\end{defn}

\begin{bibdiv}
  \begin{biblist}
    \bibselect{references}
  \end{biblist}
\end{bibdiv}
\end{document}